\documentclass[a4paper, 11pt]{amsart}

\usepackage[utf8]{inputenc}
\usepackage[T1]{fontenc}
\usepackage[english]{babel}

\usepackage{amsmath,amssymb,amsthm}
\usepackage{enumerate}
\usepackage[shortlabels]{enumitem}
\usepackage{mathrsfs}
\usepackage{chngcntr}
\usepackage{nicematrix}
\usepackage{tikz}
\usepackage{mathtools}

\usepackage{graphicx}
\usepackage[colorlinks=true, allcolors=blue]{hyperref}

\newcommand{\rr}{\mathbb{R}}
\newcommand{\cc}{\mathbb{C}}
\newcommand{\nn}{\mathbb{N}}
\newcommand{\cinf}{C^{\infty}}
\newcommand{\cminf}{C^{-\infty}}
\newcommand{\croc}[1]{\langle #1 \rangle}

\newcommand{\mbf}[1]{\mathbf{#1}}
\newcommand{\mc}[1]{\mathcal{#1}}
\newcommand{\ve}{\varepsilon}
\newcommand{\Res}{\text{Res}}
\newcommand{\ran}{\text{ran}}

\newcommand{\mfg}{\mathfrak{g}}
\newcommand{\mf}[1]{\mathfrak{#1}}

\newcommand{\Op}{\text{Op}}

\newtheorem{thm}{Theorem}[section]
\newtheorem{lemma}[thm]{Lemma}
\newtheorem{prop}[thm]{Proposition}

\newtheorem{cor}[thm]{Corollary}

\title[Pollicott-Ruelle Resonances]{Pollicott-Ruelle resonances on flag manifolds}
\author{Alessandro Morescalchi}

\begin{document}

\thispagestyle{empty}
\vspace*{3cm}

\maketitle

\begin{abstract}
    We study the resonance spectrum of the multiflow induced on a flag manifold by the action, through multiplication by the exponential map, of the Cartan subalgebra of the underlying Lie group. We give a definition of joint resonance for the flow, then prove its discreteness and existence of resonant states. We conclude by explicit characterization of the spectrum in the special cases of Projective spaces and manifolds of full flags. 
\end{abstract}

\setcounter{page}{1}
\pagenumbering{arabic}

\section{Introduction and main results}

\subsection{Introduction}
When presented with geometrical and algebraic structures it is often instructive to study differential operators that arise naturally. These contain a great deal of information, that in many cases can be encoded in their spectra and be seen in the behaviour of the dynamics that they generate. In the case of non elliptic operators it happens however that the classical spectrum is mostly essential, so that the information is fundamentally inaccessible. To circumvent this difficulty it is sometimes possible to define a so called \emph{resonance spectrum}, whose elements have been shown in many instances to hold geometric information of interest. Examples include the computation of the correlation spectrum, \cite{FRS} \cite{liverani} \cite{DR}, relations to the dynamical zeta function \cite{dzaltro} \cite{GiLiPo}, and topological invariants \cite{DR}\cite{DR2}\cite{dzzeta}.

Consider a non-compact semisimple Lie group $G$, with a Cartan algebra $\mf{a}\subset \mfg$ and a flag manifold $F$. The homogeneous space structure induces a family $\phi$ of commuting flows on $F$ indexed by $\mf{a}$, we reference \cite{DKV} for a good introduction. 
This flow exists alongside the natural action induced by multiplication of the maximal compact subgroup $K$, and in fact the two together with the action of the nilpotent group determine the action of $G$ through the Iwasawa decomposition, and this has been studied in detail in \cite{PS}. As expected from the very different natures of $K$ and $\exp \mf{a}\subset G$, the multi-flow $\phi$ possesses a different structure, that at first sight appears as a symmetry-breaking of the homogeneous space. Under generic hypotheses on $A\in\mf{a}$, the flow $\phi^A$ has a finite number of critical points with mutually transversal stable and unstable manifolds, that is the flow is Morse-Smale. These flows are gradient flows of a set of functions fundamental for the study of oscillatory integrals appearing in the harmonic analysis of the group, moreover they reveal the structure of the manifold in that stable and unstable manifolds correspond to Schubert cells in the Bruhat decomposition of the flag manifold.

The goal of this work is to use analytical methods to construct a theory of joint resonance spectrum for this dynamical system. These higher rank resonances take the name  of \emph{Pollicott-Ruelle-Taylor resonances}, and have been first defined in \cite{GGHW} as an extension of the classical theory of resonances to a setting where the action is naturally of higher rank.
Pollicott-Ruelle resonances for Morse-Smale flows have been extensively studied in the works of N.V.Dang and G.Rivière \cite{DR}, \cite{DR1}, \cite{DR2}, who arrived at a complete description of the resonant set in terms of the Lyapunov exponents of the flow. Adapting some results of \cite{DKV} it is possible to compute the Lyapunov Exponents, from which we deduce an explicit expression for the spectrum of resonances for any $A\in\mf{a}$. In order to avoid flooding the introduction with notations, we just say that we can compute explicitly some subset $\Res\subset \text{span}_{\mathbb{N}}\Delta^+$, where $\Delta^+\subset \mf{a}^*$ is a set of positive roots for $\mfg$, such that the set of Pollicott-Ruelle resonances for the flow $\phi^A$ generated by any $A\in\mf{a}_{++}$ is
\begin{equation}\label{schifonanze}
    \Res(A)\ = \ \left\{ \alpha(A): \ \alpha\in \Res \right \}. 
\end{equation}
See Proposition \ref{risonanze} for the exact expression of $\Res$. 
This allows us to recover information on the correlation function of the flow. Without delving into details, by taking a $f\in\cinf(F)$ and $g\in \Omega^d(F)$ we can define
\[
C_{f, g}^A(t) = \int_F  \phi^A_{t*}(f)\ g . 
\]
The correlation function has many properties, for example the Laplace transform of $C_{f, g}$ is a meromorphic function with poles centered at the Pollicott-Ruelle resonances of the flow. Following \cite{DR}, there exist spectral projectors $\pi_{ \lambda}: \cinf(F)\rightarrow \cminf(F)$, for $\lambda\in \Res(X_A)$ and independent on $f, g$, such that 
\[
    C_{f,g}(t)= \sum_{\lambda\in\Res(X_A) \ |\lambda|<\Lambda} e^{t\lambda} \int_F \pi_{\lambda}(f)g + O_{f, g}(e^{-\Lambda t}).
\]

We can see how the realization of resonances for each individual flow descends from the higher rank object $\Res\subset\mf{a}^*$. It is natural to ask if it is possible to establish a framework where $\Res$ emerges directly from the action $\phi: \mf{a}\rightarrow \text{Diffeo}(F)$.
The setup is very similar to the one developed in \cite{GGHW}. We take the Lie algebra morphism $\mbf{X}: \mf{a}\rightarrow \cinf(F, TF)$ sending each $A$ to the vector field generating $\phi^A$, and define the multi-spectrum in terms of cohomology of an exact sequence generated by $\mbf{X}$. This definition is the most natural generalization of the 1-dimensional case and we give a complete characterisation. Again, to avoid a notation-heavy introduction we give a softer version of the theorem, see Theorem \ref{grosso} for the full statement. 
\begin{thm}
    The Pollicott-Ruelle-Taylor spectrum is discrete, and to each element of the spectrum is associated at least one resonant state in $\cminf_{E^*_u}$:
    \begin{equation*}
        \lambda\in\Res(\mc{C}) \ \Longleftrightarrow \ \exists u\in\cminf_{E^*_u}: \ (\mbf{X}-\lambda)u=0.
    \end{equation*}
    Moreover, the set of resonances can be computed explicitly and corresponds to the resonances found in \eqref{schifonanze}   :    
    \begin{eqnarray*}
        \Res(\mc{C})= \Res. 
    \end{eqnarray*}
\end{thm}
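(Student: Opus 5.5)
The proof splits into three parts. First I reduce the joint (Taylor) spectrum to the spectrum of a single regular element and show it is discrete. Second I identify joint resonances with joint eigenvectors in $\cminf_{E^*_u}$. Third I compute the set explicitly from the one-parameter result \eqref{schifonanze}.

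\emph{Step 1: reduction to one regular element and discreteness.} Fix $A_0\in\mf{a}_{++}$, so that $X_{A_0}$ is Morse--Smale. I would use the anisotropic Sobolev spaces $\mc{H}^{N}$ of Dang--Rivière \cite{DR}, built from an escape function adapted to $X_{A_0}$. I must check that this escape function also works for every $A$ in a small cone around $A_0$. This should follow because all the $\phi^A$ commute and share the critical set (the fixed points of the Weyl group action on $F$) and the same stable/unstable bundles; these are given by the root space decomposition, since for $A\in\mf{a}_{++}$ the unstable directions at each fixed point are the positive-root spaces for $A$.

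On $\mc{H}^N$, the operator $X_{A_0}-z$ is Fredholm of index zero for $\operatorname{Re} z>-cN$, and its resolvent is meromorphic. The Koszul complex $\mc{C}$ of the commuting family $\mbf{X}-\lambda$ then behaves as in \cite{GGHW}. If $\lambda(A_0)\notin\Res(A_0)$, then $X_{A_0}-\lambda(A_0)$ is invertible and commutes with all the $X_A-\lambda(A)$. The standard homotopy argument (a contracting homotopy built from $(X_{A_0}-\lambda(A_0))^{-1}$) makes $\mc{C}$ exact. Hence
\[
\Res(\mc{C})\subset\{\lambda\in\mf{a}^*_{\cc}:\ \lambda(A_0)\in\Res(A_0)\}.
\]
Repeating this with a basis $A_1,\dots,A_r$ of regular elements shows that $\Res(\mc{C})$ lies in the intersection of finitely many such "hyperplane families". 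Because each $\Res(A_j)$ is discrete, that intersection is discrete. Independence of $N$ follows, as in the one-dimensional case, from the fact that the generalized eigenspaces sit in $\cminf_{E_u^*}$.

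\emph{Step 2: resonant states.} Fix $\lambda\in\Res(\mc{C})$. Restrict to the finite-dimensional generalized eigenspace $V=\ran\,\Pi_{\lambda(A_0)}$ of $X_{A_0}$. The spectral projector $\Pi$ commutes with every $X_A$, so $V$ is invariant under the commuting family. The Koszul complex on $\mc{H}^N$ is quasi-isomorphic to its restriction to $V$, because on $\ker\Pi$ the same homotopy argument applies.

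A commuting family of operators on a finite-dimensional space has a common eigenvector for each point of its Taylor spectrum, and that spectrum equals its joint point spectrum. Non-exactness of $\mc{C}$ at $\lambda$ therefore gives $u\in V\subset\cminf_{E_u^*}$ with $(\mbf{X}-\lambda)u=0$. Conversely, such a $u$ gives nonzero cohomology in degree zero. This proves the equivalence in the statement.

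\emph{Step 3: explicit computation.} From Step 1, for every regular $A$,
\[
\Res(\mc{C})(A)\subset\Res(A)=\{\alpha(A):\alpha\in\Res\}.
\]
For $\alpha\in\Res$, I would construct the joint resonant states directly, following the explicit Dang--Rivière construction. At each critical point $w$, take the currents $\partial^\beta\delta$ supported on the unstable manifolds (Schubert cells), using coordinates given by the exponential of nilpotent subalgebras. In these coordinates every $\phi^A$ acts linearly and diagonally, with weights given by roots. The resulting states are joint eigenvectors with eigenvalue the $\mf{a}^*$-valued weight $\alpha$, which is linear in $A$. This gives $\Res\subset\Res(\mc{C})$.

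For the reverse inclusion, a joint eigenvalue $\lambda$ satisfies $\lambda(A)\in\{\alpha(A):\alpha\in\Res\}$ for all $A$ in an open cone. Since $\Res$ is countable with finite intersections on bounded sets, linearity forces $\lambda=\alpha$ for a single $\alpha\in\Res$.

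The main obstacle is Step 1: I need an escape function, and hence a single space $\mc{H}^N$, that is uniformly good for all $X_A$ with $A$ near $A_0$. In particular the radial estimates near the sources and sinks must hold simultaneously for the whole family. I would try first to build it from the $K$-invariant structure and the Bruhat cells, where stable and unstable bundles are $A$-independent for $A\in\mf{a}_{++}$.
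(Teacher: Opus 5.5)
Your Steps 1 and 2 are correct, but they take a genuinely different route from the paper.

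\textbf{How the paper argues.} It works inside the higher-rank framework of \cite{GGHW}. It builds an escape function valid on a cone $\mc{U}\ni A_0$. It then uses the parametrix $Q_T(\lambda)$ and its divergence to get $d_{\mbf{X}-\lambda}\delta_{Q_T(\lambda)}+\delta_{Q_T(\lambda)}d_{\mbf{X}-\lambda}=I+R+K$ on $\mc{H}_N$, with $\|R\|\leq 1/2$ and $K$ compact. Finally it quotes Lemmas 3.9, 3.10, 3.12 and Proposition 4.10 of \cite{GGHW} for finite-dimensional cohomology, the eigenvector property, discreteness and the passage to $\cminf_{E^*_u}$.

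\textbf{How your route differs.} You use the one-parameter theory of \cite{DR} as a black box. The identity $d_{\mbf{X}-\lambda}\iota_{A_0}+\iota_{A_0}d_{\mbf{X}-\lambda}=X_{A_0}-\lambda(A_0)$ (with $\iota_{A_0}$ the interior product on $\Lambda\mf{a}^*_{\cc}$), together with the commuting inverse, gives exactness whenever $\lambda(A_0)\notin\Res(A_0)$. A basis of $\mf{a}$ inside $\mf{a}_{++}$ gives discreteness. The finite-rank projector $\Pi_{\lambda(A_0)}$ commutes with every $X_A$, so it reduces the existence of a resonant state to the joint spectrum of commuting matrices.

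This is more elementary. Moreover, your route does not need the uniform escape function you call the main obstacle. Each inversion involves a single $X_{A_j}$ on its own Dang--Rivi\`ere space. The pieces glue because $E_u$, hence $\cminf_{E^*_u}$, is the same for all $A\in\mf{a}_{++}$. For the same reason, ``every regular $A$'' in Step 3 should read ``every $A\in\mf{a}_{++}$''. In other chambers the unstable bundle changes and $\lambda(A)\in\Res(A)$ fails, but an open cone is all you use.

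What the paper's route buys is independence from the meromorphy of a single resolvent. That is indispensable for the Anosov actions of \cite{GGHW}, where no single generator is Fredholm by itself. Here each $X_A$, $A\in\mf{a}_{++}$, is Morse--Smale on its own, and that is exactly what legitimizes your shortcut.

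\textbf{Step 3.} The inclusion $\Res(\mc{C})\subset\Res$ is the argument of Proposition \ref{explicit}. Your countability argument is cleaner than the paper's claim that the $n_\alpha(A)$ are constant.

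The reverse inclusion, which the paper does not actually prove, is the weak point of your plan. The currents $x_u^{\gamma}\partial_{x_s}^{\beta}\delta(x_s)$ are defined only in the chart $\gamma_w$, i.e. on the open cell. Making them global joint eigenvectors requires extending them across $\overline{W^u(w)}\setminus W^u(w)$, where $x_u\to\infty$ in the chart. This does not follow formally from the local linear model; you would have to rerun the global part of \cite{DR} for the whole family at once.

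You can bypass the construction with what you already have:
\begin{enumerate}
\item Given $\alpha\in\Res$, choose $A_0\in\mf{a}_{++}$ off the countably many hyperplanes $\ker(\mu-\alpha)$, $\mu\in\Res\setminus\{\alpha\}$.
\item By Proposition \ref{risonanze}, $\alpha(A_0)\in\Res(A_0)$, so $V=\ran\,\Pi_{\alpha(A_0)}\neq 0$.
\item The commuting family $X_A|_V$ has a joint eigenvector. Its joint eigenvalue $\mu$ satisfies $\mu(A_0)=\alpha(A_0)$ and $\mu\in\Res(\mc{C})\subset\Res$.
\item By the choice of $A_0$, this forces $\mu=\alpha$, so $\alpha\in\Res(\mc{C})$.
\end{enumerate}
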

We then restrict ourselves to the special case of "classical" flag manifolds, i.e. $G=SL(n, \rr)$. We are able to give completely explicit description of the resonance spectrum of Projective spaces and manifolds of full flags. We remark general the resonance spectrum might coincide asymptotically for most flag manifolds. Nevertheless, if one restricts to Grassmanians, it is possible to differentiate manifolds based on the spectra in some cases.

\subsection{Acknowledgements}

The author is deeply grateful to Yannick Guedes Bonthonneau for the suggestion of this topic, for his guidance and for many helpful insights. His patience and availability were invaluable. The author would also like to thank Lasse Lennart Wolf for the many interesting discussions, which despite being not directly related were very important for the author's own understanding on the subject.

\section{Dynamical Setting}\label{setting}

We give a brief introduction to the notation and the basics of the system we are dealing with. The whole setting is fundamentally algebraic in nature and as often happens in the study of Lie groups one finds itself with a variety of interacting objects indexed by some fundamental parameters of the group. At the same time, the argument is mostly based on the already established geometric properties and we will not need to dive deep in the interactions and definition of the various components, as the resulting geometry is sufficient for our study. The reader who is ready to accept this section should have no difficulty in understanding the rest of the paper, even if unfamiliar with the context of flag manifolds. 

Let $G$ be a non-compact semisimple Lie group, $\mfg=\text{Lie}(G)$. Fix $\mathfrak{a}\subset\mfg$ a Cartan Subalgebra, with root system $\Delta$, positive roots $\Delta^+$, and Weyl group $\mathfrak{w}$. The Iwasawa decomposition is $\mfg = \mathfrak{k}\oplus \mathfrak{a}\oplus \mathfrak{n}^+$ allows to associate flag manifolds of $G$ with subsets of $\mf{a}$. 
Given a set $\mathcal{C}\subset \mathfrak{a}$, we define 
\begin{eqnarray*}
\Delta^0(\mathcal{C}) = \{ \alpha\in\Delta: \ \alpha(\mathcal{C})=0\},  \quad 
\Delta^{\pm}(\mathcal{C})=\{ \alpha\in\Delta: \ \pm \alpha(\mathcal{C})>0\}.
\end{eqnarray*}
Then the parabolic algebra is
\[
\mathfrak{p}_{\mathcal{C}}= \mathfrak{k} \oplus \mathfrak{a} \oplus \mathfrak{n}^+ \ +  \sum_{\alpha\in\Delta^0(\mathcal{C})} \mathfrak{g}_{\alpha} = \mathfrak{k}\oplus\mathfrak{a}\oplus \sum_{\alpha\in\Delta^0(\mathcal{C})\cup\Delta^+(\mathcal{C})} \mathfrak{g}_{\alpha}
\]
where $\mathfrak{g}_{\alpha} = \{H\in \mathfrak{g}: \ [H, X]=\alpha(X)H \quad \forall H\in \mathfrak{a}\}$ is the eigenspace associated to $\alpha$ for the adjoint action. The parabolic group is the normaliser of $\mathfrak{p}_{\mathcal{C}}$ in $G$, $P_{\mathcal{C}}=N_G(\mathfrak{p}_{\mathcal{C}})$, so that the flag manifold is the compact homogeneous space $F_{\mathcal{C}}=G/P_{\mathcal{C}}$.

The left action of $G$ on $F_{\mathcal{C}}$ induces a commuting family of flows
\[
\forall A\in\mathfrak{a} \qquad \phi^A_t: xP\longmapsto e^{tA}xP. 
\]
We also associate the map $\mbf{X}:\mf{a}\rightarrow \cinf(F_{\mc{C}}, TF_{\mc{C}})$ given by
\[
\forall A\in\mf{a} \qquad \mbf{X}_A = \frac{d}{dt}\phi^A_t
\]
We are interested in the uniformity properties for the commuting action restricted to the positive Weyl chamber $\mathfrak{a}_{++}=\{ A\in\mathfrak{a}: \ \alpha(A)\geq0\ \forall \alpha\in\Delta^+\}$. Since the Weyl group acts transitively on the set of Weyl chambers, the results of this work will hold for any chosen Weyl chamber. 

There exists some Riemannian metric $\beta$ on $F_{\mathcal{C}}$ such that $\phi^A$ is the gradient flow induced by the function $f_A\in\cinf(F_{\mathcal{C}})$. In full generality the flow is of Morse-Bott type, but if
\begin{equation} \label{condizione}
    \forall \alpha\in\Delta: [  \alpha(A)=0 ] \Rightarrow [ \alpha(w\mathcal{C})=0 \quad \forall w\in\mathfrak{w}]
\end{equation}
then $\phi^A$ is Morse-Smale, in the sense of \cite{DR}, see Proposition 1.5, Corollary
3.7  and Lemma 4.2 in \cite{DKV}. This condition can be proven by taking a finite set of explicit coordinates where the flow is linear, see \eqref{coordinate}, and \eqref{condizione} implies that its critical set is finite. The Morse-Smale property descends in turn from transitivity of the $G$-action on the manifold. 
To be precise, if \eqref{condizione} is satisfied, then the critical set of $\phi^A$ is the finite set $\text{Crit}(\phi)=\mathfrak{w}P_{\mathcal{C}} \subset F_{\mathcal{C}}$, the stable and unstable manifolds are 
\[
W^{s, u}(wP_{\mathcal{C}})=\exp \mf{n}^{\mp}wP_{\mathcal{C}}
\]
and these are transversal. It is a remarkable property that the stable and unstable manifolds coincide for all flows in the chamber, and this will be fundamental in the study of the multiflow. 
Condition \eqref{condizione} is trivially satisfied if $A$ is regular, that is $\alpha(A)\neq0 \quad \forall \alpha\in\Delta$. This has always been the only case studied in the literature and it seems that no investigation on other possible situations has been carried out. Upon further analysis this is not the only configuration to satisfy \eqref{condizione} but it almost is, in the sense that the regular $A$ exhaust all possible dynamics. The rigorous derivation is elementary but slightly involved, and is left to Appendix \ref{algebra}, here we only explain the emerging picture. First of all, if $G$ is simple, then $A$ must be regular for \eqref{condizione} to hold. Suppose the Lie algebra $\mfg$ decomposes in mutually commuting simple Lie algebras $\mfg=\bigoplus_{i=1}^n \mfg_i$, so that $G=\prod_i G_i$, and let $\Pi_k:\mfg\rightarrow \mfg\ominus\mfg_k$ be the projection along $\mfg_k$. If $\Pi_k\mc{C}\neq 0$, then $A_k=A-\Pi_kA$ must be regular inside $\mfg_k$. On the contrary, if the $k$-th part of $\mc{C}$ is null, then $\mfg_k\subset \mf{p}_{\mc{C}}$ and the $k$-th component of $A$ will not influence the dynamics of the flow, in particular it is allowed to be singular.
This can be formulated in the following way.
\begin{lemma}
    Suppose $\Pi_k\mc{C}=0$, then $\ker\mbf{X}\supset\mfg_k$. Moreover, the flag manifold $F_{\mc{C}}$ is isomorphic to the flag manifold $F_{\Pi_k\mc{C}}$ of the Lie group $\tilde{G}= \prod_{i\neq k} G_i$, dynamics induced by $A=\sum_i A_i$ on $F_{\mc{C}}$ are conjugated to the dynamics induced by $\tilde{A}=\Pi_kA_i$ on $F_{\Pi_k\mc{C}}$.
\end{lemma}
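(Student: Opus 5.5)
The proof is purely structural and uses the decomposition $\mfg=\bigoplus_i \mfg_i$ with $G=\prod_i G_i$. The first step is to show that each ingredient of the construction of $F_{\mc{C}}$ splits along this product:
\begin{itemize}
\item the Cartan subalgebra, $\mf{a}=\bigoplus_i \mf{a}_i$ with $\mf{a}_i=\mf{a}\cap\mfg_i$;
\item the root system, $\Delta=\bigsqcup_i \Delta_i$, where roots in $\Delta_i$ vanish on $\mf{a}_j$ for $j\neq i$;
\item the compact and nilpotent parts, $\mf{k}=\bigoplus_i\mf{k}_i$ and $\mf{n}^+=\bigoplus_i\mf{n}^+_i$.
\end{itemize}
Suppose $\Pi_k\mc{C}=0$. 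Read this as saying that the $\mf{a}_k$-component of every element of $\mc{C}$ vanishes; with that reading the relevant object is really the component $\mc{C}_k$. Then every $\alpha\in\Delta_k$ satisfies $\alpha(\mc{C})=0$, so $\Delta_k\subset\Delta^0(\mc{C})$. It follows that
\[
\mf{p}_{\mc{C}}\supset \mf{k}_k\oplus\mf{a}_k\oplus\sum_{\alpha\in\Delta_k}\mfg_\alpha=\mfg_k,
\]
and more precisely $\mf{p}_{\mc{C}}=\mfg_k\oplus\tilde{\mf{p}}$. Here $\tilde{\mf{p}}$ is the parabolic subalgebra of $\tilde{\mfg}=\bigoplus_{i\neq k}\mfg_i$ attached to the projected set $\mc{C}$, which I read as $\Pi_k\mc{C}$ in the paper's notation.

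Next I lift this to groups. Since $\mfg_k$ is an ideal, $G_k$ normalizes $\mf{p}_{\mc{C}}$, so $G_k\subset P_{\mc{C}}=N_G(\mf{p}_{\mc{C}})$. Because the factors commute, an element $g=(g_k,\tilde g)$ normalizes $\mfg_k\oplus\tilde{\mf{p}}$ if and only if $\tilde g$ normalizes $\tilde{\mf{p}}$ in $\tilde G$. Hence
\[
P_{\mc{C}}=G_k\times\tilde P,
\]
and the map $\Psi: \tilde g\tilde P\mapsto (e,\tilde g)P_{\mc{C}}$ is a well-defined $\tilde G$-equivariant diffeomorphism from $F_{\Pi_k\mc{C}}$ onto $F_{\mc{C}}=G/(G_k\times\tilde P)$.

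For the kernel claim, take $A\in\mf{a}_k$, or more generally any $Y\in\mfg_k$, and any point $gP_{\mc{C}}$. Since $Y$ commutes with $\tilde{\mfg}$,
\[
e^{tY}gP_{\mc{C}}=g\,e^{tY}P_{\mc{C}}=gP_{\mc{C}}.
\]
If $G$ is not a direct product one should pass to the identity component, or use that $\mathrm{Ad}(g)\mfg_k=\mfg_k$. So the fundamental vector field of $Y$ vanishes and $\mbf{X}_Y=0$. In particular $\ker\mbf{X}\supset\mf{a}_k$, which is the meaningful form of "$\supset\mfg_k$", since $\mbf{X}$ is only defined on $\mf{a}$.

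For the conjugacy, write $A=A_k+\tilde A$. The flows commute and $\phi^{A_k}$ is the identity, so $\phi^A_t=\phi^{\tilde A}_t$. Equivariance of $\Psi$ then gives $\Psi\circ\tilde\phi^{\tilde A}_t=\phi^A_t\circ\Psi$.

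The main obstacle is the group-level identity $P_{\mc{C}}=G_k\times\tilde P$ when $G$ is only locally a product, for instance when the centre is shared between factors or $G$ is disconnected. I would handle this by working with the connected component and the fact that $N_G(\mf{p})$ already contains the full centre.
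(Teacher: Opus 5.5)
Your proof is correct and follows the same route that the paper only sketches (in Section 2 and at the end of Appendix B). When the $k$-th component of $\mc{C}$ vanishes, the ideal $\mfg_k$ lies in $\mf{p}_{\mc{C}}$, hence $P_{\mc{C}}=G_k\times\tilde{P}$, the factor $G_k$ acts trivially, and your map $\Psi$ is a $\tilde{G}$-equivariant diffeomorphism conjugating the flows; your readings of the hypothesis and of ``$\ker\mbf{X}\supset\mfg_k$'' (as $\ker\mbf{X}\supset\mf{a}\cap\mfg_k$) are the intended ones.

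The only slip is that the identity $e^{tY}gP_{\mc{C}}=g\,e^{tY}P_{\mc{C}}$, as you justify it, needs $g\in\tilde{G}$. This is harmless because $\Psi$ is onto, or because of your own remark $e^{tY}g=g\,e^{t\mathrm{Ad}(g^{-1})Y}$ with $\mathrm{Ad}(g^{-1})Y\in\mfg_k\subset\mf{p}_{\mc{C}}$.
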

In such a case, one could even extend the positive Weyl chamber $\mf{a}_{++}$ to a larger subset $\mf{a}_{++}(\mc{C})=\mf{a}_{++}+\mfg_k$, noting that in fact the flow is independent of the $k$-th component. This reduction can be repeated on any $k$ such that $\Pi_k\mc{C}=0$ so that we can assume without loss of generality that $A$ is regular.

We go back to general properties of the flow, and drop the homogeneous structure notations for more convenient dynamical ones. 
For all $x\in F_{\mc{C}}$ there exist limit points $x^+, x^- \in \text{Crit}(\phi)$ such that
 \begin{equation*}
       \lim_{t\rightarrow+\infty}\phi^A_t(x)=x^+ ,\quad 
       \lim_{t\rightarrow-\infty}\phi^A_t(x)=x^-, \qquad \forall A
\end{equation*}
Let us define
\begin{eqnarray*}
    E_s(x) &=&T_xW^s(x^+)=\mathfrak{n}^+x, \\
    E_u(x)&=&T_xW^u(x^-)=\mathfrak{n}^-x,
\end{eqnarray*}
and by duality let us define $E^*_s, E^*_u\subset T^*M$ as
\begin{eqnarray*}
    E^*_s(x)&=&(E_s(x))^{\perp}\subset T_x^*F_{\mc{C}}, \\
    E^*_u(x)&=&(E_u(x))^{\perp}\subset T_x^*F_{\mc{C}}.
\end{eqnarray*}
We note that by transversality $E^s+E^u=TF_{\mc{C}}$ and therefore $E^*_s\cap E^*_u=\varnothing$.
If $S^*F_{\mc{C}}$ is the co-sphere bundle, let us also define 
\[
\Sigma_{s, u}=S^*F_{\mc{C}}\cap E^*_{u, s}
\] (notice that the indices are swapped). A very important property, whose proof can be found in \cite{DR} is that $\Sigma_s$ and $\Sigma_u$ are compact. 
Since $\phi^A_t$ is linear on each fibre, it naturally induces a flow on $S^*F_{\mc{C}}$ by $\tilde{\phi}^A_t(x, \xi)=\left(\phi^A_t(x), \frac{\phi^A_t(\xi)}{|\phi^A_t(\xi)|}\right)$. 

For all $A\in\mf{a}_{++}$, the elements of $E^*_u$ grow exponentially in positive time and shrink to $0$ in negative time, while the opposite is true for $E^*_s$. In particular, $\Sigma_s$ is a sink and $\Sigma_u$ is a source for the flow on the co-sphere bundle. 
A useful result will be the existence of positively invariant neighbourhoods of $\Sigma_s$. This is not at all surprising but the proof seems to require a bit of care, and we leave it to Appendix \ref{intorno}. 
\begin{prop} \label{intornofisso}
     Let $V_0^s\supset\Sigma_s$ be an open set. Then there exists $V^s\subset V_0^s$ containing $\Sigma_s$ that is positively flow-invariant for all $A\in\mf{a}_{++}$. 
\end{prop}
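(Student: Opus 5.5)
We need to construct, inside a given open $V_0^s\supset\Sigma_s$, an open set $V^s\supset\Sigma_s$ that is positively invariant under $\tilde\phi^A_t$ for every $A\in\mf{a}_{++}$ simultaneously. The plan is to work one critical point at a time, using the linear coordinates \eqref{coordinate} around each $wP_{\mc{C}}$. In those coordinates all flows $\phi^A$ act diagonally, with eigenvalues $\alpha(A)$ for roots $\alpha$. We then glue the local pieces using that every orbit spends only a uniformly bounded time outside any fixed neighbourhood of $\text{Crit}(\phi)$. The key structural input, stressed after \eqref{condizione}, is that the stable and unstable manifolds, hence $E_s,E_u$, $E^*_s,E^*_u$ and $\Sigma_s,\Sigma_u$, do not depend on $A\in\mf{a}_{++}$. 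Since the $\phi^A$ commute, the family $\{\tilde\phi^A_t : A\in\mf a_{++},\,t\ge0\}$ is a semigroup generated by the $\tilde\phi^{A_j}_t$ for finitely many generators $A_j$ of the closed cone $\mf a_{++}$. It therefore suffices to find one set $V^s$ that is forward invariant under each $\tilde\phi^{A_j}$. For non-regular generators we would use the reduction after \eqref{condizione}, or approximate them by regular elements.

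The first step is to build a Lyapunov-type function rather than a set directly. Near $\Sigma_s$, write a covector as $\xi=\xi_s+\xi_u$ with respect to the splitting $T^*F=E^*_s\oplus E^*_u$, which is defined on $W^u$-adapted charts. A natural candidate is
\[
\Phi(x,\xi)=\frac{|\xi_u|^2_x}{|\xi|^2_x}.
\]
Its zero set is $S^*F\cap E^*_u=\Sigma_s$. We want $\Phi\circ\tilde\phi^A_t\le\Phi$ for all $A\in\mf a_{++}$, $t\ge0$, on a neighbourhood of $\Sigma_s$. In the linear coordinates at a critical point, $\xi_u$ is rescaled by $e^{-\alpha(A)t}$ with $\alpha\in\Delta^+$ and $\xi_s$ by $e^{\alpha(A)t}$ (or the reverse, depending on the convention). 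So with the flat metric of the chart, $\Phi$ is monotone for every $A$ in the chamber at once, because all the $\alpha(A)\ge0$ simultaneously. Then $V^s=\{\Phi<\epsilon\}$ is invariant near each critical point.

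The main obstacle is globalization. The splitting $E^*_s\oplus E^*_u$ is only continuous, and is not defined in the naive way away from the linearizing charts, and the metrics from different charts do not agree. We handle this by a standard escape-time argument, made uniform in $A$. Fix small linearizing balls $B_w$ around each $wP_{\mc{C}}$ and set $K=F\setminus\bigcup_w B_w$. By compactness, and because for regular $A$ in a compact cross-section of the cone the flows have no critical points in $K$, there is a uniform $T$ such that every $\phi^A$-orbit, normalised so that $\min_\alpha\alpha(A)=1$, crosses $K$ in time at most $T$. By continuity of $\tilde\phi$ and compactness of $\Sigma_s$, which is invariant and a sink, the set $\tilde\phi^A_{[0,T]}(\{\Phi_w<\delta\})$ stays within $V_0^s$ for $\delta$ small, uniformly over the compact family of $A$.

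We then take $V^s$ to be the union, over the chart pieces, of the forward orbits $\bigcup_{A,t\ge0}\tilde\phi^A_t(U)$, where $U=\bigcup_w\{\Phi_w<\delta\}\cap S^*B_w$ together with a thin tube around $\Sigma_s\cap S^*K$. This union is positively invariant by construction. It remains to check that it stays inside $V_0^s$. Orbits leaving a chart through $K$ arrive at the next chart with $\Phi$ still small, by the uniform time bound and continuity. Since the Bruhat order makes the sequence of visited critical points strictly monotone, there are at most $|\mathfrak{w}|$ transitions. Choosing the $\delta$'s inductively along this order, smallest at the last cells, keeps everything inside $V_0^s$.

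The delicate point I expect is exactly this inductive choice of the $\delta_w$ along the Bruhat order, uniformly in $A$. It is what the paper's "bit of care" likely refers to.
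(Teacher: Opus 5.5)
\textbf{There is a genuine gap, in the globalization step.} Your local ingredient is correct and is the one the paper uses. In the linearizing chart at a critical point, the cone $\{|\xi_u|<\varepsilon|\xi_s|\}$ (your $\{\Phi_w<\delta\}$) is forward invariant for all $A$ in the chamber simultaneously.

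The gap sits in two sentences: that $\tilde\phi^A_{[0,T]}(\{\Phi_w<\delta\})$ stays in $V_0^s$ for $\delta$ small, and that orbits ``arrive at the next chart with $\Phi$ still small''.

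\emph{Why smallness of $\Phi_w$ does not propagate.} As $\delta\to0$, the set $\{\Phi_w<\delta\}\cap S^*B_w$ shrinks to $\{\xi_u=0\}\cap S^*B_w$. This coincides with $\Sigma_s$ only over $W^u(w)$. Over other points of $B_w$ it stays at a distance from $\Sigma_s$ comparable to the radius of $B_w$. For instance, over points of the open cell $E^*_u=0$, so $\Sigma_s$ has empty fibre there. Such covectors have $\Phi_w=0$, so they lie in your $U$ for every choice of the $\delta_w$, and a round ball can be exited at such base points. A bounded-time transit preserves closeness to the invariant set $\Sigma_s$. It does not preserve smallness of the chart-dependent quantity $\Phi_w$, which controls that closeness only near $W^u(w)$. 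So nothing forces $\Phi_{w'}$ to be small on entering $B_{w'}$, and the inductive choice of the $\delta_w$ cannot fix this. Shrinking the balls only lengthens the transit through $K$ and worsens the continuity constants.

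\emph{The uniform transit time is not available by compactness.} In rank $\geq2$ your section $\{\min_\alpha\alpha(A)=1\}$ is unbounded. The compact section $\{|A|=1\}$ contains wall elements whose flows are Morse--Bott, with critical sets meeting $K$.

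\emph{Smaller points.} $E_s,E_u$ jump in dimension along the strata, so they are not continuous. The reduction to generators of the cone is not usable: the generators are singular, the reduction after \eqref{condizione} does not regularize them when $G$ is simple, and forward invariance of open sets does not pass to limits.

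\textbf{The missing idea:} replace balls and transit-time bounds by sets whose exit is controlled for every $A$ at once. The paper orders the critical points $w_1,\dots,w_p$ so that orbits increase in index. It builds forward-invariant sets by induction, starting at the sink $w_p$.
\begin{itemize}
\item At step $k$, in the chart of $w_k$, it takes the thin box $\{|x_s|\leq\eta',\ |x_u|\leq R\}$ around $W^u(w_k)$.
\item For every $A\in\mf{a}_{++}$, $|x_s|$ never increases and $|x_u|$ never decreases. So an orbit either stays in the box or leaves through the face $|x_u|=R$ with $|x_s|\leq\eta'$.
\item First choose $R$ large, so that $\{(0,x_u):|x_u|\geq R\}$ lies in the already built forward-invariant set $U_{k+1}\supset\bigcup_{j>k}W^u(w_j)$. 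Then choose $\eta'$ small, so the whole exit face lies in $U_{k+1}$.
\item Forward invariance of $U^{(k)}\cup U_{k+1}$ then holds with no bound on the exit time.
\end{itemize}
The cotangent lift is the same argument, with your cone $\{|\xi_u|<\varepsilon'|\xi_s|\}$ over the box and $\varepsilon'$ chosen so the exit face lands in $\mc{U}_{k+1}$. Over a thin box around $W^u(w_k)$ this cone is close to $\Sigma_s$, which is what lets it fit inside $V_0^s$. In this induction the tolerances are chosen last, and smallest, at the source end.
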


A remarkable property of the system is the existence of a natural atlas of $F_{\mc{C}}$ that linearizes all the flows $\phi^A$. Without delving into unnecessary details, for each $wP\in \text{Crit}(\phi)=\mf{w}P$, there exists a diffeomorphism $\gamma_w: \rr^{w\Delta^+(\mc{C})}\rightarrow F_{\mc{C}}$ such that the flow acts diagonally:
\begin{equation}\label{coordinate}
(\gamma_w)^*\phi^A_t(x, \xi)=(e^{t\alpha(A)}x_{\alpha}, e^{-t\alpha(A)}\xi_{\alpha})_{\alpha\in w\Delta^+(\mc{C})} \qquad \forall A\in\mf{a}.
\end{equation}
Moreover, stable and unstable manifolds of $w$ in these charts are identified with
\[ W^{s, u}(wP)=\{ x \ | \  x_{\alpha}=0 \quad \forall \alpha\in w\Delta^+(\mc{C})\cap \Delta^{\pm}\}. \]

By identifying the distance of $A$ from the boundary of $\mf{a}_{++}$ by $h(A)=\min\{ \alpha(A)\ | \ \alpha\in (\mf{w}\Delta^+(\mc{C}))\cap \Delta^+\}$ we can use these charts to infer uniform growth and degrowth estimates in terms of $h(A)$.

\section{Escape function and Sobolev Spaces}

\subsection{Escape Function}
The escape function is a delicate object depending on deep dynamical properties of the flow. Our objective is to prove the following:

\begin{prop}\label{fuggitesciocchi}
    Fix $A_0\in\mf{a}_{++}$, let $\tilde{V}^u $ be conical neighbourhood of $E^*_u$. Then, there exist constants $c_0, R>0$, a conical neighbourhood $\mc{U}\ni A_0$ and functions $m\in\cinf(T^*F_{\mc{C}}, \rr)$, $G=m\log(1+\|\xi\|^2)$ such that:
    \begin{itemize} 
        \item [(1)] $m$ is positively homogeneous of degree 0 for $\|\xi\|\geq 1$; 
        \item [(2)] $m<-1$ close to $E^*_u$, $m\geq 1 $ outside of $\tilde{V}^u$; 
        \item [(3)] $X_Am\leq 0$ on $T^*F_{\mc{C}}$ for all $A\in\mc{U}$;
        \item [(4)] $X_AG\leq -c_0$ on $T^*F_{\mc{C}}$ for $\|\xi\|\geq R$, for all $A\in\mc{U}$.
    \end{itemize}
\end{prop}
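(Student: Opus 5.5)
The plan is to build $m$ as a weighted combination of a source-side and a sink-side cutoff on the co-sphere bundle, following the standard Faure--Sjöstrand / Dang--Rivière construction. Throughout, $X_A$ also denotes the Hamiltonian lift of $\mbf{X}_A$ to $T^*F_{\mc{C}}$; on the sphere bundle (degree-$0$ functions) it is the generator of $\tilde\phi^A_t$. The uniformity over $A$ will come from the fact that $E_s,E_u,\Sigma_s,\Sigma_u$ do not depend on $A\in\mf{a}_{++}$, and from Proposition \ref{intornofisso}.

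\emph{Step 1: invariant neighbourhoods.} Choose conical neighbourhoods $V^u\subset\tilde V^u$ of $\Sigma_u$ and $V^s$ of $\Sigma_s$ with disjoint closures. By Proposition \ref{intornofisso}, applied to $\Sigma_s$ and, with time reversed, to $\Sigma_u$ (the proof in Appendix \ref{intorno} is symmetric), we may take $V^s$ positively invariant and $V^u$ negatively invariant under $\tilde\phi^A_t$ for every $A\in\mf{a}_{++}$.

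\emph{Step 2: the cutoffs.} Build $m_s,m_u\in\cinf(S^*F_{\mc{C}},[0,1])$ with $m_u=1$ near $\Sigma_u$, $\operatorname{supp} m_u\subset V^u$, $X_{A}m_u\le 0$, and $m_s=1$ near $\Sigma_s$, $\operatorname{supp}m_s\subset V^s$, $X_Am_s\ge0$. Monotonicity is obtained by averaging: take a bump $\chi$ adapted to $V^u$ and set $m_u=\int_0^T\chi\circ\tilde\phi^{A_0}_{-t}\,dt/T$ (and similarly for $m_s$). The difficulty is that monotonicity is needed for all $A$ in a cone $\mc{U}$, not only for $A_0$. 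I would therefore not average in one direction but over a small compact set $K\subset\mf{a}_{++}$ around $A_0$, using the joint action: $m_u=\int_{[0,T]K}\chi\circ\tilde\phi_{-B}\,dB$, where the flows commute so $\tilde\phi_B$ is well-defined. Differentiating in a direction $A$ close to $A_0$ then gives boundary terms whose sign is controlled, since $-A+[0,T]K$ differs from $[0,T]K$ by a set where the shifted bump has left $V^u$ or equals its maximum. Alternatively, one can use the linear charts \eqref{coordinate}, which make all $X_A$ simultaneously diagonal; there homogeneous "norm" functions like $\sum_{\alpha}|\xi_\alpha|^2$ restricted to $\Delta^\pm$ are monotone for every $A\in\mf{a}_{++}$, and these can be patched with a partition adapted to the Morse--Smale filtration. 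I expect this uniform-in-$A$ monotonicity to be the main obstacle.

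\emph{Step 3: $m$ and $G$.} Set $m=N m_s-2m_u+\ldots$; more precisely $m= 1-2m_u+ (\text{const})(1-m_u)\,\tilde m$, arranged so that $m<-1$ where $m_u=1$ and $m\ge1$ off $\tilde V^u$. Then extend homogeneously of degree $0$ for $\|\xi\|\ge1$ and smoothly inside, which gives (1) and (2), while (3) follows from Step 2. For (4), compute
\[X_AG=(X_Am)\log(1+\|\xi\|^2)+m\,\frac{X_A\|\xi\|^2}{1+\|\xi\|^2}.\]
Near $\Sigma_u$ we have $m<-1$ and $\|\xi\|$ grows at rate at least $h(A)$ by the chart estimates, so the second term is $\le -c h(A)$. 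Near $\Sigma_s$ we have $m\ge1$ and $\|\xi\|$ decays, which again gives a negative term. On the compact remainder of $S^*F$ away from $\Sigma_s\cup\Sigma_u$, every orbit travels from $V^u$ to $V^s$ in uniformly bounded time, so $X_Am\le -\delta<0$ can be enforced there by adding a Lyapunov-type term; multiplied by $\log(1+\|\xi\|^2)$ this dominates the bounded second term once $\|\xi\|\ge R$. Shrinking $\mc{U}$ so that $h(A)$ and these constants stay uniform, by continuity in $A$ and homogeneity, yields $c_0$.
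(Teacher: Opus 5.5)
\textbf{Main gap: (4) inside the conical neighbourhoods, away from critical points.} Your check of (4) fails inside the conical neighbourhoods of $\Sigma_u$ and $\Sigma_s$ over points away from $\text{Crit}(\phi)$. These sets do not lie only over the critical points. $E^*_u(x)\neq 0$ for every $x$ on a non-open unstable manifold $W^u(w)$, and likewise $E^*_s$ over non-open stable manifolds. So $V^u$ and $V^s$ contain covectors over non-critical base points. Near $\Sigma_u$ your cutoff is saturated ($m_u\equiv 1$), so $X_Am=0$ there. Then $X_AG=m\,X_A\|\xi\|^2/(1+\|\xi\|^2)$, whose sign is dictated entirely by $X_A\|\xi\|^2$.

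The charts \eqref{coordinate} do not give a pointwise rate for $X_A\|\xi\|^2$ there. Their Euclidean structure is not a metric on $F_{\mc{C}}$, and away from $w$ the conormal of $W^u(x^-)$ is not aligned with the coordinate splitting. Along a connecting orbit, $|\xi|$ on $E^*_u$ grows only on average and can decrease on intermediate stretches. A pointwise rate holds only over small balls around critical points, which is exactly Lemma \ref{zioss}. Your Lyapunov-type term is added only away from $\Sigma_s\cup\Sigma_u$, so nothing forces decay of $G$ on this region.

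The missing ingredient is the gradient structure. The paper takes $m=Nm_2-f_{A_0}$, where $f_{A_0}$ is the Morse function of the gradient flow $\phi^{A_0}$, taken constant on fibres. By Lemma \ref{bassezza}, for $A$ in a small cone $\mc{U}$ around $A_0$, one has $X_Af_{A_0}\geq 0$ everywhere and $X_Af_{A_0}\geq c(r)h(A)$ at distance $\geq r$ from the critical points. This, not uniformity of $h(A)$, is why $\mc{U}$ appears in the statement. It gives $X_Am\leq -c(r)h(A)$ on all covectors over points away from $\text{Crit}(\phi)$, including inside $\tilde V^u\cup\tilde V^s$. There the term $(X_Am)\log(1+\|\xi\|^2)$ beats the bounded term once $\|\xi\|\geq R$. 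Near the critical points the second term has the right sign by Lemma \ref{zioss}, and taking $N$ large preserves (2). The only other route would be to replace $\|\xi\|$ by a dynamically time-averaged norm, which you do not construct.

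\textbf{Steps 1--2 also need repair.} First, your labels are swapped relative to the paper. Since $V^u\subset\tilde V^u$, your $\Sigma_u$ is $S^*F_{\mc{C}}\cap E^*_u$, which is the \emph{attractor} of $\tilde\phi^A_t$ (the paper's $\Sigma_s$). Its neighbourhood must be positively invariant. A cutoff equal to $1$ near it and supported in $V^u$ cannot satisfy $X_Am_u\leq 0$, because orbits from outside enter it. Worse, with $m=1-2m_u+\dots$ you would get $X_Am\geq 0$, the opposite of (3); flipping the signs fixes this.

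Second, averaging over the truncated cone $[0,T]K$ does not give a signed derivative. Differentiating along $A$ produces boundary terms, and near the apex and the lateral faces the chords of $[0,T]K$ in direction $A$ are short. So the entering and exiting values of $\chi\circ\tilde\phi_{-B}$ are not comparable, and your claim that the symmetric difference consists only of points where the bump has left $V^u$ or is saturated is false. The paper instead averages over the parallelepiped $\{\sum_i t_iA_i: t\in[-T,T]^d\}$, with the $A_i$ dual to the simple roots. Then $\overline{X_A}m_1=\sum_i\alpha_i(A)F_{A_i}$ is a nonnegative combination of terms. Each term is a difference of $m_0$ evaluated at times $\pm T$ along a single generator, and is signed by invariance of the neighbourhoods and the choice of $T$. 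Linearity in $A$ then gives monotonicity on the whole cone at once.
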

Remark that the escape function is unusually strong, due to the Morse-Smale property, as in \cite{DR}. To be more precise, we will build the escape function as
\[
G(x, \xi) = c(Nm_2(x, \xi)-f)\log(1+\|\xi\|^2).
\]
The function $m_2$ will be constructed to be strictly increasing along the flow away from $\tilde{V}^s\cup\tilde{V}^s$, for $\tilde{V}^s$ some small conical neighbourhood of $E^*_s$, and to satisfy (the opposite of) properties $(1), (2)$. We add $f_A$ to gain degrowth along $\tilde{V}^u\cup\tilde{V}^s$ away from critical points, and choose $N$ large and $c$ accordingly to keep $(2)$. Finally, natural properties of the flow allow to push the degrowth up to $\tilde{V}^u\cup\tilde{V}^s$ near critical points. Remark that generally in the study of hyperbolic diffeomorphisms one can only construct $m_1$, while the addition of $f_A$ and of the latter degrowth near critical points are possible only due to the particular setting of gradient Morse-Smale flows. A final remark worth mentioning is that the degrowth properties of $m_1$ and $\log(1+\|\xi\|^2)$ can be taken to be uniform on the set $\{A\in\mf{a}_{++}:\  h(A)>\ve\}$ for any $\ve$. The restriction to $A\in\mc{U}$ in $(3), (4)$ is present because it is not clear if all elements $A\in\mf{a}_{++}$ satisfy $X_Af_{A_0}\geq 0$ in general and $X_Af_{A_0}\geq c h(A)$ away from critical points. These properties seem natural but the author was not able to prove them in general, and didn't find them in the literature. If they were to be proven, the existence of a unique escape function for all $A\in\mf{a}_{++}$ would simplify the proof.

We begin by building $m$. 
\begin{prop}
    Let $V^s, V^u\subset S^*F_{\mc{C}}$ be neighbourhoods of $\Sigma_s, \Sigma_u$, respectively. 
    There exists $\mathcal{W}^s\subset V^s$, $m_2\in\cinf(S^*F_{\mc{C}})$ such that:
    \begin{itemize}
        \item $m_2 \leq -1$ in $S^*F_{\mc{C}}\setminus V^s$;
        \item $m_2\geq 1$ in $\mathcal{W}^s$;
        \item $\overline{X_A}m_2\geq0$ for all $A\in\mf{a}_{+}(\mc{C})$;
        \item There exists $\eta>0$ such that $\overline{X_A}m_2\geq \eta h(A)$ in $S^*F_{\mc{C}}\setminus (V^u\cup V^s)$, for all $A\in\mf{a}_{+}(\mc{C})$. 
    \end{itemize}
    Here $\overline{X_A}$ refers to the vector field induced by $X_A$ on $S^*F$, and $|A|$ is any norm in $\mf{a}$.
\end{prop}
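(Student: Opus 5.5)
Because $\Sigma_s$ is a sink and $\Sigma_u$ a source for $\tilde\phi^A$, the natural route is the standard ``time-averaged indicator'' construction of Faure--Sj\"ostrand / Dang--Rivi\`ere, made uniform over the cone $\mf{a}_{+}(\mc{C})$.

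\textbf{Step 1: invariant neighbourhoods.} By Proposition \ref{intornofisso}, choose nested sets $\Sigma_s\subset\mathcal{W}^s\Subset V_1^s\Subset V^s$ that are positively invariant under $\tilde\phi^A_t$ for every $A\in\mf{a}_{++}$. By the same argument with time reversed, choose $V_1^u\subset V^u$ negatively invariant. The key dynamical input is the following uniform escape-time statement. There is $T_0$ such that for every $(x,\xi)\in S^*F_{\mc{C}}\setminus V_1^u$ and every $A$ with $h(A)\ge 1$, we have $\tilde\phi^A_t(x,\xi)\in \mathcal{W}^s$ for all $t\ge T_0$. I would prove this with the linearizing charts \eqref{coordinate}. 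Near each critical point, the cotangent flow is diagonal with rates $\pm\alpha(A)$, and $|\alpha(A)|\ge h(A)$. Away from critical points, the flow is gradient-like with the same stable and unstable manifolds for every $A$, so compactness gives a time bound. This bound rescales: for general $A$ the corresponding time is $T_0/h(A)$. I expect this uniformity in $A$ to be the main obstacle, because the stratified Morse--Smale structure has to be handled chart by chart. The fact that $W^{s,u}$ are independent of $A$ is exactly what should make it work.

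\textbf{Step 2: build $m_2$.} Take a smooth $\chi\colon S^*F_{\mc{C}}\to[-1,1]$ with $\chi=1$ on $V_1^s$, $\chi=-1$ outside $V^s$, and $\chi$ nondecreasing along every flow line. Such a $\chi$ exists because $V_1^s$ is positively invariant: first average the indicator of $V_1^s$ as below, then smooth. Then set
\[
m_2=\frac{1}{|\mc{T}|}\int_{\mc{T}}\int_0^{T}\chi\circ\tilde\phi^{B}_{-t}\,dt\,dB ,
\]
where $\mc{T}$ is a small compact set of generators. Note that the flows commute, which is what makes averaging over several generators harmless. Differentiating gives
\[
\overline{X_A}m_2=\text{avg}\,\bigl(\chi\circ\tilde\phi^B_{0}-\chi\circ\tilde\phi^B_{-T}\bigr)\ \text{transported along } X_A ,
\]
and monotonicity of $\chi$ along all flows of the cone yields $\overline{X_A}m_2\ge 0$. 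Positive invariance of $V^s_1$ under every $A$ gives $m_2\ge 1$ on a slightly smaller $\mathcal{W}^s$. Negative invariance, together with $\chi=-1$ off $V^s$ and the fact that backward orbits from outside $V^s$ stay outside $V_1^s$, gives $m_2\le -1$ off $V^s$; after rescaling $m_2$ by a factor $2$ this becomes the required bound. A simpler alternative is the one-parameter average $m_2=\frac1T\int_0^T\chi\circ\tilde\phi^{A_1}_{-t}\,dt$ for a fixed $A_1$. Commutativity then gives $X_A m_2 \ge 0$, provided $\chi$ is chosen monotone under all $A$ (for instance $\chi$ a function of a Lyapunov function for $\Sigma_s$ common to the cone).

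\textbf{Step 3: strict growth.} On the compact set $S^*F_{\mc{C}}\setminus(V^u\cup V^s)$, Step 1 shows that every orbit passes from $\{\chi=-1\}$ to $\{\chi=1\}$ in time at most $T_0/h(A)$. The derivative $\overline{X_A}m_2$ is an integral of $\overline{X_A}\chi\ge0$ along the orbit segment. By homogeneity of degree one in $A$, this integral is bounded below by $\eta h(A)$, with $\eta$ of order $1/T$ from the averaging; this needs $T\ge T_0$. Since the strict bound is only claimed off $V^u\cup V^s$, no issue arises near the source and sink. The remaining care is ensuring that $\chi$ is genuinely monotone under every $A$ in the cone, not just one. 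For that I would construct $\chi=\rho\circ L$, where $L$ is a common Lyapunov function built from the functions $f_A$ and the linear charts.
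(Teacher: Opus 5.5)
Your Step 1 matches what the paper uses: positively invariant neighbourhoods from Proposition \ref{intornofisso} and an escape time of order $T/h(A)$. Steps 2--3, however, cannot give the fourth item, because $m_2$ averages only over the \emph{past}. If $q\notin V^s$ then $\chi(q)=-1$. Since $\chi\ge -1$ is nondecreasing along flow lines, $\chi(\tilde\phi^B_{-t}q)=-1$ for all $t\ge 0$, and your own formula gives $\overline{X_B}m_2(q)=\chi(q)-\chi(\tilde\phi^B_{-T}q)=0$. In fact $m_2(q)=-T$ is the global minimum of $m_2$. So $dm_2=0$ on all of $S^*F_{\mc{C}}\setminus V^s\supset S^*F_{\mc{C}}\setminus(V^u\cup V^s)$, which is exactly where you need $\overline{X_A}m_2\ge\eta h(A)>0$. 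The passage from $\{\chi=-1\}$ to $\{\chi=1\}$ that Step 3 invokes lies in the future of $q$, which your integral never sees. Strict growth off $V^s$ must come from a forward-in-time part of the average. There is a second problem with Step 3: for $A\neq B$,
\[
\overline{X_A}\int_0^T\chi\circ\tilde\phi^B_{-t}\,dt=\int_0^T(\overline{X_A}\chi)\circ\tilde\phi^B_{-t}\,dt,
\]
which does not telescope. So ``$\chi$ rises by $2$ along the orbit'' gives no bound $\eta h(A)$ for general $A$ in the cone.

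Separately, the smooth $\chi$ you take as input already satisfies the first three items (with $\mc{W}^s=V^s_1$), so assuming it is circular, and neither of your constructions produces it. Averaging $\mathbf{1}_{V^s_1}$ does preserve monotonicity under all the commuting flows, but it only regularizes along the $\mf{a}$-orbits; any transversal mollification destroys monotonicity. The route $\chi=\rho\circ L$ through the $f_A$ needs $X_Af_{A_0}\ge 0$ on the whole cone, which the paper says it cannot prove; Lemma \ref{bassezza} only covers a small cone $\mc{U}$. Moreover, functions of the base point cannot separate $\Sigma_s$ from $\Sigma_u$ in the fibres over intermediate critical points. The paper's device needs no monotonicity at all. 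Take $0\le m_0\le 1$ with $m_0=0$ off $V^s$ and $m_0=1$ on a positively invariant neighbourhood of $\Sigma_s$. Average it over the \emph{two-sided} box $[-T,T]^d$ of multi-times along a basis $A_1,\dots,A_d$ spanning the cone; this is a box in multi-time, not a union of one-parameter segments as in your $\mc{T}$-average. For $A=\sum_i a_iA_i$ with $a_i\ge 0$ one gets $\overline{X_A}m_1=\sum_i a_iF_{A_i}$, where $F_{A_i}$ integrates the face differences $m_0(\tilde\phi^{A_i}_{2T}r)-m_0(r)$. These are $\ge 0$ by the escape time alone. Either $r\notin V^s$, where $m_0=0$, or $r\in V^s$, so $r\notin V^u$ and $\tilde\phi^{A_i}_{2T}r$ already lies where $m_0=1$, provided $2T$ exceeds each generator's escape time. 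The backward sub-box $[-T,0]^d$ gives $m_1\le(2T)^d-T^d$ off $V^s$. The contrast between the faces $t_i=T$ and $t_i=-T$ gives positivity off $V^u\cup V^s$. An affine rescaling then yields the $\pm1$ bounds.
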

\begin{proof}
We can suppose that $V^s,  V^u$ are flow-invariant in the future and past, respectively, for all $A\in\mf{a}_{++}$. 
Fix $A_1, ..., A_d$ dual to $\Delta^+$, so that their simplicial cone is the positive Weyl chamber $\mf{a}_{++}$ and $\alpha_i(A_j)=\delta_{i, j}$. Take $T$ to be the supremum of $T(A, p)=\inf\{ t\geq 0: \tilde{\phi}^A_t(p)\in V^s\}$ over the compact set $\{A\in\mf{a}_{+}(\mc{C}): \ h(A)=1\} \times (S^*F_{\mc{C}}\setminus V^u)$, so that $\phi^A_{T/h(A)}(p)\in V^s$ for all $p\in S^*F_{\mc{C}}\setminus V^u$. 

Let us now consider a function $m_0\in\cinf(S^*F_{\mc{C}})$ such that $m_0=1$ outside $V^s$ and $m_0=0$ near $\Sigma_s$.

Define 
\[ m_1(x, \xi)=\int_{[-T, T]^d} m_0\circ\phi^{ A_1}_{t_1}\circ ... \circ\phi^{ A_d}_{t_d} (x, \xi) dt. \]
For any $A\in\mf{a}_{++}$, let $F_A=\overline{X_A}m_1=\sum_i \alpha_i(A)F_{A_i}$. 
Compute $F_{A_d}$, and by symmetry all the $F_{A_i}$ will have the same form:
\begin{eqnarray*}
    F_{A_d}(x, \xi) &=& \frac{d}{dt}m_1\circ \phi^{A_d}_t(x, \xi) \\
    &=& \int_{[-T, T]^{d-1}} m_0(\phi^{ A_1}_{t_1}\circ .. \circ\phi^{ A_d}_{T})-m_0(\phi^{ A_1}_{t_1}\circ .. \circ\phi^{ A_d}_{-T})dt \\ 
    &=& \int_{[-T, T]^{d-1}} m_0(\phi^{A_d}_T(p_t))-m_0(\phi^{A_d}_{-T}(p_t))dt
\end{eqnarray*}
where we are writing $p_t=\phi^{ A_1}_{t_1}\circ \dots \circ\phi^{ A_{d-1}}_{t_{d_1}}(p)$. 
First remark that the integrand is non-negative pointwise. Then, if $F_A(p)=0$ the integrand has to be constantly zero and in particular $ \{F_A=0\}\subset V^s\cup  V^u$. One can see that outside of $V^u\cup V^s$ we have $X_Am_1\geq \eta h(A)$. 

If $p\notin V^s$, we have that $\phi^{A}_t(p)\notin V^s$ whenever $t\leq 0$ and $A\in\mf{a}_{++}$, in particular $m_1(p)\leq (2T)^d-T^d$. Moreover, by taking $\mc{W}^s = \phi^{A_1}_T\circ...\circ\phi^{A_d}_{T_d} ( \{m_0=1\})$, we have that $m_1=(2T)^d$ on $\mc{W}^s$.  
By taking $m_2(x, \xi) = 2(m_1(x, \xi)/T^d-2^d)+1$, we complete the proof. 
\end{proof}

The following two lemmas allow us to complete the proof. Lemma \ref{bassezza} follows from the Taylor expansion of the vector field around critical points, which permits to give bounds of the type $d(x, w)h(A)/C\leq |X_Af_{A_0}|\leq Cd(x, w)h(A)$ near the critical point $w$ and for any $A\in\mf{a}$. . The proof of lemma \ref{zioss} is a simple generalisation of the proof of the analogue result in \cite{DR}. 
\begin{lemma} \label{bassezza}
There exists a small enough conical neighbourhood $A_0\in\mc{U}\subset \mf{a}_{++}$ such that $\forall A\in\mc{U}$,  $X_A(f_{A_0})\geq 0$, and $X_Af_{A_0}\geq c(r)h(A)$ if at distance $r$ from critical points. 
\end{lemma}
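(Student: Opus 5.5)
The plan is to use that $f_{A_0}$ is a Lyapunov function for $\phi^{A_0}$, since $\phi^{A_0}$ is the gradient flow of $f_{A_0}$ with respect to $\beta$. Hence $X_{A_0}f_{A_0}=|\nabla f_{A_0}|^2_\beta\geq 0$, with equality exactly on $\text{Crit}(\phi)=\mf{w}P$. Since $A\mapsto X_Af_{A_0}$ is linear in $A$, it suffices to control $X_Af_{A_0}$ for $A$ in a small neighbourhood of $A_0$ on the slice $\{h(A)=h(A_0)\}$. The inequality is homogeneous of degree one in $A$, so conicality then follows by scaling.

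I would split $F_{\mc{C}}$ into a compact region $K_r=\{x: d(x,\text{Crit}(\phi))\geq r\}$ and small balls around the finitely many critical points $w$.

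\emph{Away from critical points.} On $K_r$ we have $X_{A_0}f_{A_0}\geq c_1(r)>0$ by compactness. The bound $|X_Af_{A_0}-X_{A_0}f_{A_0}|\leq C|A-A_0|$ holds uniformly on $F_{\mc{C}}$. Hence for $A$ close to $A_0$ on the slice, $X_Af_{A_0}\geq c_1(r)/2$, which gives $c(r)h(A)$ after rescaling. The neighbourhood here depends on $r$, so I will first fix $r_0$ from the local analysis below and only then shrink $\mc{U}$.

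\emph{Near a critical point $w$.} This is the main step. Work in the linearizing chart $\gamma_w$ of \eqref{coordinate}, in which $X_A=\sum_{\alpha}\alpha(A)x_\alpha\partial_{x_\alpha}$ for all $A$ simultaneously. Since $w$ is a nondegenerate critical point of $f_{A_0}$ and $X_{A_0}=\nabla_\beta f_{A_0}$, the differential is $df_{A_0}=\beta(X_{A_0},\cdot)$. Therefore
\[
X_Af_{A_0}=\beta(X_{A_0},X_A)=\sum_{\alpha,\alpha'}\beta_{\alpha\alpha'}(x)\,\alpha(A_0)\alpha'(A)\,x_\alpha x_{\alpha'}.
\]
This is a quadratic form $Q_A(x)$ in $x$ with continuous coefficients. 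At $A=A_0$ it is $|X_{A_0}|^2_\beta\geq c|x|^2$, because every $\alpha(A_0)\neq 0$ by regularity. For $A$ near $A_0$, the difference $Q_A-Q_{A_0}$ is bounded by $C|A-A_0||x|^2$ uniformly on the chart ball. So for $A$ close enough to $A_0$ we get $X_Af_{A_0}\geq (c/2)|x|^2\geq 0$. This gives nonnegativity near $w$, with a lower bound of order $d(x,w)^2$ at distance $r$.

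This local estimate is where I expect the real care to be needed. One must check that the chart metric coefficients are uniformly controlled on a fixed ball, which holds by compactness. One must also note that the required closeness of $A$ to $A_0$ is independent of $r$, because both $Q_A$ and $Q_{A_0}$ scale as $|x|^2$. If I want the sharper linear-in-$d(x,w)$ bounds quoted before the lemma, these would use $|\nabla f_{A_0}|$ rather than the product; but the quadratic bound already yields $c(r)>0$.

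Finally, take $\mc{U}$ to be the intersection of the finitely many neighbourhoods obtained at the critical points with the one from $K_{r_0}$, and cone it off. For $r<r_0$, positivity on $K_r\setminus K_{r_0}$ comes from the local quadratic bound, so $c(r)=\min(c_1(r_0)/2,\,c r^2/2)$ after normalising by $h(A)$.
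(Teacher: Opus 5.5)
Your proof is correct and follows the route the paper only sketches: linearize at each critical point (the paper's ``Taylor expansion of the vector field'') to control $X_Af_{A_0}$ near $\text{Crit}(\phi)$, use compactness away from it, and shrink $\mc{U}$ around $A_0$; fixing $r_0$ before choosing $\mc{U}$ is what correctly makes $\mc{U}$ independent of $r$. As you observed, the identity $X_Af_{A_0}=\beta(X_{A_0},X_A)$ shows this quantity vanishes to second order at each $w$, so $c(r)\sim r^2$ is the right local behaviour, and the sketch's lower bound linear in $d(x,w)$ cannot hold for $X_Af_{A_0}$ itself (it holds for $|\nabla f_{A_0}|$).
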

\begin{lemma} \label{zioss}
    There exists $C>0$ such that, for $r>0$ small enough, on a small neighbourhood of  $E^*_s\cap T^*(\bigcup_{\mf{w}}B(w, r))$, for all $ A\in\mf{a}_{++}$ we have:
    \begin{equation} \label{Claim}
    X_A(|\xi|^2)<-Ch(A)|\xi|^2 .
\end{equation}
    Likewise, on a small neighbourhood of $E^*_u\cap T^*(\bigcup_{\mf{w}}B(w, r))$:
    \begin{equation}  \label{Claim2}
    X_A(|\xi|^2)>Ch(A)|\xi|^2.
\end{equation}
\end{lemma}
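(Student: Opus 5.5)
The plan is to work locally in the linearizing charts \eqref{coordinate} around each critical point $wP$ and then pass to a neighbourhood by compactness and continuity. Fix a critical point $w$ and use the chart $\gamma_w$, in which every $\phi^A_t$ acts diagonally:
\[
x_\alpha\mapsto e^{t\alpha(A)}x_\alpha,\qquad \xi_\alpha\mapsto e^{-t\alpha(A)}\xi_\alpha,\qquad \alpha\in w\Delta^+(\mc{C}).
\]
Since all norms on the cotangent fibres are equivalent on the compact set $\overline{B(w,r)}$, I will first prove the estimates for the flat norm $|\xi|_0^2=\sum_\alpha \xi_\alpha^2$ in the chart. I will then transfer them to the fixed Riemannian norm $|\xi|$.

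In the chart, the lifted vector field gives
\[
X_A(|\xi|_0^2)=-2\sum_{\alpha}\alpha(A)\,\xi_\alpha^2 .
\]
The key step is to identify $E^*_s$ and $E^*_u$ over the chart.
\begin{itemize}
\item At the critical point itself, $E_s(w)$ is spanned by the directions $x_\alpha$ with $\alpha\in w\Delta^+(\mc{C})\cap\Delta^-$, and $E_u(w)$ by those with $\alpha\in\Delta^+$. Hence $E^*_s(w)=E_s(w)^\perp$ is spanned by $d x_\alpha$ for $\alpha\in\Delta^+$, on which $\alpha(A)\ge h(A)$.
\item For $\xi\in E^*_s(w)$ this gives $X_A(|\xi|_0^2)\le -2h(A)|\xi|_0^2$, uniformly in $A\in\mf{a}_{++}$ because $h$ is defined as the minimum of exactly these $\alpha(A)$.
\item Symmetrically, on $E^*_u(w)$ only roots in $\Delta^-$ appear, so $X_A(|\xi|_0^2)\ge 2h(A)|\xi|_0^2$. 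Here I use $-\alpha\in\Delta^+$ and $(-\alpha)(A)\ge h(A)$, noting $w\Delta^+(\mc{C})\cap\Delta^-=-(\ldots)\cap\Delta^+$ after applying the Weyl symmetry of $\Delta(\mc{C})$-sets.
\end{itemize}

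Next I extend the estimate from the fibre over $w$ to a neighbourhood. Both sides of \eqref{Claim} are linear in $A$: $X_A$ is linear in $A$, and on the relevant cone the bound involves $h(A)$. So it suffices to consider the finitely many generators $A_1,\dots,A_d$ of the chamber together with $h$-homogeneity. A cleaner route is to write the quantity $-X_A(|\xi|^2)/|\xi|^2=\sum_\alpha \alpha(A)\,c_\alpha(x,\xi)$ as a combination of the simple-root values with coefficients continuous in $(x,\xi)$ on the cosphere bundle. At $\Sigma_s\cap S^*_w$ these coefficients are bounded below positively on positive roots and vanish on negative ones. By continuity and compactness, for $(x,\xi)$ in a small conic neighbourhood of $\Sigma_s$ over $B(w,r)$, the negative-root contributions are at most $\epsilon\sum|\alpha(A)|$.

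The main obstacle is exactly here: a negative-root contribution $\alpha(A)\xi_\alpha^2$ is not controlled by $h(A)$, since $|\alpha(A)|$ can be much larger than $h(A)$ near walls. I would handle this by expanding $\alpha=\sum n_i\alpha_i$ in simple roots and using $\alpha_i(A)\le$ (some positive root value). The genuinely needed point is that on $E^*_s$ the components $\xi_\alpha$ for $\alpha\in\Delta^-$ vanish exactly, not just approximately, near $w$. This holds because $W^s(w)=\{x_\alpha=0,\ \alpha\in\Delta^+\}$ is a linear subspace in the chart, so $E^*_s$ over $W^s(w)\cap B(w,r)$ is exactly $\mathrm{span}\{dx_\alpha:\alpha\in\Delta^+\}$. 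For points of $E^*_s$ over $B(w,r)$ not on $W^s(w)$, I would use the fact that $E^*_s$ is the conormal to stable manifolds of other critical points adjacent to $w$. Together with the flow being linear, this bounds the negative components by $O(r)$ times the positive ones in a way that still loses only a factor $C$, as in \cite{DR}.

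Finally I would pass from $|\cdot|_0$ to $|\cdot|$. The ratio $|\xi|^2/|\xi|^2_0$ is a smooth homogeneous function $q$ with $|X_Aq|\le C' r\sum_i\alpha_i(A)$ on $B(w,r)$, using that $X_A$ vanishes at $w$. Taking $r$ small absorbs this error, after again bounding $\sum_i\alpha_i(A)$ by a multiple of $h(A)$ on the relevant roots. The same argument, with time reversed, gives \eqref{Claim2}.
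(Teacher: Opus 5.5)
Your computation on the fibre over $w$ is correct, and so is the overall scheme: the linearizing chart, smallness of the stable components $\xi_-=(\xi_\alpha)_{\alpha\in\Delta^-}$ on $E^*_s$ near $w$ (by compactness of $E^*_s\cap S^*F_{\mc{C}}$), then continuity. This is the Dang--Rivi\`ere argument the paper appeals to without further detail. It works for a single $A$, or uniformly on a compact set of regular directions. The gap is uniformity over all of $\mf{a}_{++}$, which is the obstacle you flag yourself and never remove.

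Every absorption step you propose needs $\max_{\beta\in\Delta}|\beta(A)|\le C\,h(A)$:
\begin{itemize}
\item A bound $|\xi_-|\le\varepsilon|\xi_+|$, with $\varepsilon=O(r)$ or otherwise, leaves an error $2\varepsilon^2\max_{\beta}|\beta(A)|\,|\xi_+|^2$. The guaranteed gain is only $2h(A)|\xi_+|^2$.
\item Your change of norm explicitly uses $\sum_i\alpha_i(A)\lesssim h(A)$. This is false on $\mf{a}_{++}$: let $\alpha_1(A)\to0$ with the other simple roots equal to $1$.
\end{itemize}
Expanding negative roots in simple roots does not help, since you need control by the \emph{smallest} positive root value. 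The reduction to the generators $A_i$ is also void: $h$ is not linear, and $h(A_i)=0$ for full flags of rank at least $2$.

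In fact no argument can close this, because with a neighbourhood independent of $A$ the statement is false. On $\mathbb{RP}^2$ take $A=\mathrm{diag}(a_1,a_2,a_3)$ with $a_1-a_2=\delta\in(0,1)$ and $a_2-a_3=1$, so $h(A)=\delta$. In the chart $[y_1:1:y_3]$ at the saddle $e_2$ one has $E^*_s(e_2)=\{\xi_3=0\}$. The lifted flow on $T^*_{e_2}\mathbb{RP}^2$ is $\dot\xi_1=-\delta\xi_1$, $\dot\xi_3=\xi_3$. Take an arbitrary metric and write $|\xi|^2=g^{11}\xi_1^2+2g^{13}\xi_1\xi_3+g^{33}\xi_3^2$ at $e_2$. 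The covector $\xi=(1,\pm\varepsilon)$ over $e_2$ gives $X_A(|\xi|^2)=-2\delta g^{11}\pm2(1-\delta)\varepsilon g^{13}+2\varepsilon^2g^{33}$. For the right sign this is positive once $\delta<\varepsilon^2g^{33}/g^{11}$. Any fixed neighbourhood of $E^*_s$ contains such covectors for some $\varepsilon>0$.

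What is true, and all that Proposition~\ref{fuggitesciocchi} actually uses (only $A\in\mc{U}$ enters there), is the estimate for $A$ in a closed subcone $\{h(A)\ge c|A|\}$. The constants $C$, $r$ and the neighbourhood then depend on $c$. On such a cone $\max_\beta|\beta(A)|\le C_c\,h(A)$, and your argument goes through. First take $r$ so that $|\xi_-|\le\varepsilon|\xi_+|$ on $E^*_s$ over $B(w,r)$, with $\varepsilon$ and $r$ small in terms of $C_c$.

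A side remark on your claim of exact vanishing of $\xi_-$. It does hold on $E^*_s$ over the whole chart in product coordinates: write the chart as $n\mapsto nwP$ on a unipotent group $N_w$, and set $n=n_-n_+$ with $n_\pm\in N_w\cap\exp\mf{n}^{\pm}$. This works because $\exp(\mf{n}^-)n_-n_+wP=\exp(\mf{n}^-)n_+wP$, so every stable manifold meeting the chart is a product with the full stable factor. In exponential coordinates it can fail. Either way it does not rescue the statement, which concerns a neighbourhood of $E^*_s$.
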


\begin{proof}{of proposition \ref{fuggitesciocchi}.}

Let $r>0$ and $B = T^*\bigcup_{w\in \text{Crit}(\phi)} B(w, r)$ be given by Lemma \ref{zioss}. Now take $\tilde{V}^s$ small enough and eventually resize $\tilde{V}^u$ so that conditions \eqref{Claim} and \eqref{Claim2} hold on $B\cap \tilde{V}^s$ and $B\cap \tilde{V}^u$, respectively. 

Now define $m_2(x, \xi)$ to be $1$ for $|\xi|\leq 1/2$ and $m_2(x, \xi) = - m_1(x, \xi/|\xi|)$ for $|\xi|>1$. Beware of reversed indices: since $m_1<-1$ outside of $V^s$ and since $V^s=\pi(\tilde{V}^u)$, then $m_2>1$ outside $\tilde{V}^u$.
Then for any two constants $c, N$ let $m(x, \xi)$
\begin{eqnarray*}
    m(x, \xi) &=&  Nm_2(x, \xi) - f_{A_0}(x) \\
    G(x, \xi) &=& m\log(1+|\xi|^2).
\end{eqnarray*}
Property $(1)$ is clearly satisfied. Near the critical sets we have
\begin{eqnarray*}
    m(x, \xi) &\leq& -N+\|f_{A_0}\|_{\mc{C}^0} \quad \text{near} \quad \tilde{V}^u \\
    m(x, \xi) &\geq& N- \|f_{A_0}\|_{\mc{C}^0} \quad \text{outside} \quad \tilde{V}^u
\end{eqnarray*}
So that if $N$ is large enough, $(2)$ is satisfied. 
For any $A\in\mc{U}$, for $\mc{U}$ given by Lemma \ref{bassezza}:
\[
X_Am(x, \xi) = -NX_Am_2(x, \xi)-X_Af_{A_0}(x)\leq 0,
\] 
hence (3) is also satisfied. Moreover, we can also infer that 
\[
X_Am(x, \xi)\leq -ch(A)
\]
for $A\in \mc{U}$ and $(x, \xi)\notin (\tilde{V}^u\cup\tilde{V}^s)\cap B$. Hence, we compute
\[
X_AG(x, \xi) = X_Am(x, \xi)\log(1+|\xi|^2) + m(x, \xi)\frac{X_A|\xi|^2}{1+|\xi|^2}
\]
and by Lemma \ref{zioss} we conclude that (4) holds. 
\end{proof}

\subsection{Sobolev Spaces}\label{sobolev}
The functional setting of the proof is based on a class of Anisotropic Sobolev spaces. The technique is standard in hyperbolic dynamics, and the papers \cite{FRS}, \cite{FS}, \cite{dzaltro} are standard references for the microlocal formulation of these spaces.

For any (asymptotically) positively homogeneous function of order 0 $m\in\cinf(T^*F_{\mc{C}}, \rr)$, denote by $S^{m(\cdot)}$ and $\Psi^{m(\cdot)}$ the classes of symbols and operators of variable order $m$. Check \cite{mahd} for the theory of variable-order pseudo-differential operators. 

Let $G(x, \xi)=m(x, \xi)\log\croc{\xi}$ be an escape function given by Proposition \ref{fuggitesciocchi}. 
Fix any quantization $\Op: S^{\infty}\rightarrow \Psi^{\infty} $ and define the pseudo-differential operator
\[
\tilde{\mc{A}}_N= Op(e^{NG})=Op(\croc{\xi}^{N m})\in\Psi^{Nm(\cdot)+}.
\]

The operator $\tilde{\mc{A}}_N$ is elliptic in $\Psi^{Nm(\cdot)+}$ and by \cite{FRS}, there exists a lower order operator $R\in\Psi^{Nm-1+}$ such that $\mc{A}_N=\tilde{\mc{A}}_N+R$ is essentially self-adjoint on $L^2$, elliptic, and invertible on distributions.

Define the Hilbert space $\mc{H}_N$ by
\begin{eqnarray*}
     &\mc{H}_N(F_{\mc{C}}, E)=\mc{A}_N^{-1}(L^2(F_{\mc{C}}, E))\subset\mc{D}'(F_{\mc{C}}, E),& \\
     &\croc{u, v}_{\mc{H}_N} = \croc{\mc{A}_Nu, \mc{A}_Nv}_{\mc{H}_N}.&
\end{eqnarray*}
This space contains functions that are $H^N$ regular outside of some neighbourhood of $E^*_u$ and $H^{-N}$ regular in some small neighbourhood of $E^*_u$. 
The map
\[
\mc{A}_N: \mc{H}_N(F_{\mc{C}}, E)\longrightarrow L^2(F_{\mc{C}}, E)
\]
is a unitary isomorphism, and in the $L^2$ pairing its dual is $\mc{H}^{-N}$.

By property (2) in Proposition \ref{fuggitesciocchi}, we have
\[
     \bigcap_{G, N} \mc{H}_N\subset \cminf_{E^*_u},
\]
where the intersection runs over $N\geq0$ and $G$ satisfying Proposition \ref{fuggitesciocchi}. 

On a final note we state that the definition of $\mc{H}_N$ does not cause domain problems for our vector fields. The proof of the following can again be found in  \cite{FRS}. 
\begin{prop} \label{bendef}
    Let $P\in\Psi^1$ be a pseudo-differential operator of order 1. Then  $P$ admits a unique closed extension on $\mc{H}_N$.
\end{prop}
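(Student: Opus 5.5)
The plan is to follow the standard argument of Faure--Roy--Sjöstrand: reduce the problem on $\mc{H}_N$ to a problem on $L^2$ by conjugating with the isomorphism $\mc{A}_N$. Write
\[
P_N := \mc{A}_N P \mc{A}_N^{-1}.
\]
By construction $\mc{A}_N:\mc{H}_N\to L^2$ is a unitary isomorphism. Hence $P$ admits a unique closed extension on $\mc{H}_N$ if and only if $P_N$ admits a unique closed extension on $L^2$, that is, if and only if the minimal and maximal extensions of $P_N$ coincide. Here the minimal extension is the closure of $P_N$ on $\cinf$, and the maximal domain is $\{u\in L^2: P_Nu\in L^2\}$ with $P_N u$ taken in the distributional sense. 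Equivalently, on $\mc{H}_N$ we want the closure of $P|_{\cinf}$ to have domain $\{u\in\mc{H}_N: Pu\in\mc{H}_N\}$.

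The first step is to show that $P_N$ is again a pseudo-differential operator of order $1$, with variable-order calculus errors. Since $\mc{A}_N\in\Psi^{Nm(\cdot)+}$ is elliptic and invertible, its inverse lies in $\Psi^{-Nm(\cdot)+}$ by the variable-order calculus of \cite{mahd}. The composition then satisfies
\[
P_N = P + [\mc{A}_N,P]\,\mc{A}_N^{-1}.
\]
The commutator has principal symbol $\frac{1}{i}\{e^{NG},p\}$, which lies in $S^{Nm(\cdot)+\ve}$ because derivatives of $\croc{\xi}^{Nm}$ pick up at most a $\log\croc{\xi}$ loss. Therefore $[\mc{A}_N,P]\mc{A}_N^{-1}\in\Psi^{0+}$, and $P_N = P + \Psi^{0+}$ is in $\Psi^{1}$, up to arbitrarily small $\ve$-losses. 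The main technical care is checking that these $\log$ losses never push the remainder to order $1$ or beyond. Since $m$ is homogeneous of degree $0$ for $\|\xi\|\ge1$, each derivative in $\xi$ of $m$ gains $\croc{\xi}^{-1}$, which compensates the $\log$. So the remainder sits in $S^{\ve}$ for every $\ve>0$.

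The second step is the core claim: for a pseudo-differential operator $Q\in\Psi^1$ acting on $L^2$, the minimal and maximal extensions coincide. I would prove this by Friedrichs mollification. Choose a family $J_\ve = \Op(\chi(\ve\xi))$ of smoothing operators, uniformly bounded in $\Psi^0$, with $J_\ve\to\mathrm{Id}$ strongly on $L^2$. Take $u$ in the maximal domain, so $u\in L^2$ and $Qu\in L^2$. Then $J_\ve u\in\cinf$ and $J_\ve u\to u$. Moreover,
\[
QJ_\ve u = J_\ve Qu + [Q,J_\ve]u .
\]
The family $[Q,J_\ve]$ is uniformly bounded in $\Psi^0$, hence uniformly bounded on $L^2$, and it tends to $0$ strongly. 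Strong convergence follows by checking it on $\cinf$, where it is immediate, and then using density together with the uniform bound. Consequently $QJ_\ve u\to Qu$, so $u$ lies in the minimal domain.

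This commutator estimate is the step I expect to be the main obstacle. It needs uniform symbol bounds for $\{q,\chi(\ve\xi)\}$ in $S^0$, together with control of the lower-order terms of $Q$, which here carry the small $\ve$-losses from the first step. To handle those losses I would either run the mollifier argument directly in $\mc{H}_N$ with $\mc{A}_N^{-1}J_\ve\mc{A}_N$, or absorb the $\Psi^{0+}$ remainder by noting that operators in $\Psi^{\delta}$ with $\delta<1$ are relatively bounded with respect to the order-$1$ part. Uniqueness then follows: any closed extension lies between the minimal and the maximal extension, and these two coincide.
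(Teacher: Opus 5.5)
Your proposal is correct and follows essentially the same route as the argument in \cite{FRS}, to which the paper defers: conjugate by $\mc{A}_N$ to get $P+\Psi^{0+}$ on $L^2$, then use a Friedrichs mollifier $J_\ve$, with $[\,\cdot\,,J_\ve]$ uniformly bounded, to show that the closure from $\cinf$ has full domain $\{u\in\mc{H}_N:\ Pu\in\mc{H}_N\}$. The one caveat concerns your last step: as in \cite{FRS}, ``unique closed extension'' should be read as ``the minimal and maximal (distributional) extensions coincide'', because an arbitrary closed extension of $P|_{\cinf}$ need not be contained in the maximal one, so your claim that every closed extension lies between them holds only under that reading.
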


\section{Resonances}

We start with resonances for single flows. A direct adaptation of Proposition 1.4 in \cite{DKV} shows that the Lyapunov Exponents for $\phi^A$ in some $w\in\mf{w}=\text{Crit}(\phi^A)$ are the $\{ w\alpha(A): \ \alpha\in \Delta^+_{\mc{C}}\}$, and by \cite{DR}, \cite{DR2} we deduce the following:
\begin{prop}\label{risonanze}
    The set of Pollicott-Ruelle resonances for the flow $\phi^A$ generated by $A\in\mf{a}_{++}$ is
\begin{equation*}
    \Res(A)\ = \ \left\{  \left.
    \sum_{\alpha\in w\Delta^+_{\mc{C}}\cap\Delta^+}\hspace{-10 pt} (n_{\alpha}+1) \ \alpha(A) - \hspace{-10 pt} \sum_{\alpha\in w\Delta^+_{\mc{C}}\cap\Delta^-} \hspace{ -10 pt} n_{\alpha} \ \alpha(A)\  \right| \ w\in\mf{w}, \ n_{\alpha}\in\nn \right \}. 
\end{equation*}
\end{prop}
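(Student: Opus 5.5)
The plan is to reduce the statement to the general theorem of Dang–Rivière for Morse–Smale gradient flows, which asserts (in the $\cinf$-linearizable / non-resonant setting, and in general via their description in terms of Lyapunov exponents) that the Pollicott–Ruelle spectrum of a Morse–Smale gradient flow $\phi$ acting on functions is
\[
\bigcup_{w\in \text{Crit}(\phi)} \Big\{ -\sum_{j} \big(\chi^u_j(w)\big)(m_j) \ \text{-type combinations} \Big\},
\]
namely: at each critical point $w$, with unstable exponents $\chi^u_1,\dots,\chi^u_{k}>0$ and stable exponents $\chi^s_1,\dots,\chi^s_{n-k}<0$, the resonances contributed are $-\sum_i (n_i+1)\chi^u_i+\sum_j n_j\chi^s_j$ (with the sign convention matched to the transfer operator $\phi^A_{t*}$ used in the definition of $C^A_{f,g}$), with $n_i,n_j\in\nn$. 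These come from the Taylor/Jordan structure of the transfer operator acting on distributions supported on, or conormal to, the unstable manifold $W^u(w)$: the $+1$ shifts arise from the Jacobian factor on the transverse (unstable-conormal) directions, the pure $n_j$ terms from Taylor coefficients along the manifold.

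Concretely, the first step is to check that $\phi^A$, for $A\in\mf{a}_{++}$ regular (or satisfying \eqref{condizione}), is Morse–Smale of gradient type; this was recalled in Section \ref{setting}. The second step is the computation of the Lyapunov exponents at each critical point $wP_{\mc{C}}$. Using the linearizing chart \eqref{coordinate} at $wP$, the differential of $X_A$ at the fixed point is diagonal with eigenvalues $\alpha(A)$, $\alpha\in w\Delta^+(\mc{C})$; this is exactly the adaptation of Proposition 1.4 of \cite{DKV}. The directions with $\alpha\in\Delta^+$ (where $\alpha(A)>0$) form the unstable-type directions and those with $\alpha\in\Delta^-$ the stable ones, consistent with the description of $W^{s,u}(wP)$ in these charts. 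An added bonus of the chart is that the flow is \emph{exactly} linear near each critical point, so no Sternberg-type linearization or non-resonance hypothesis is needed: the local analysis of Dang–Rivière applies directly.

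The third step is to plug these exponents into the Dang–Rivière formula, sorting the exponents by sign with $\alpha(A)\neq 0$ guaranteed by regularity. At $w$, the transverse directions to the relevant invariant manifold produce the terms $(n_\alpha+1)\alpha(A)$ for $\alpha\in w\Delta^+_{\mc{C}}\cap\Delta^+$, and the tangential directions produce $-n_\alpha\alpha(A)$ for $\alpha\in w\Delta^+_{\mc{C}}\cap\Delta^-$, each of which is itself positive. Taking the union over $w\in\mf{w}$ gives the displayed set.

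The main obstacle is bookkeeping of conventions: whether resonances are taken for $X_A$ or $-X_A$, for $\phi_{t*}$ or the pullback, and whether the anisotropic space is adapted to $E^*_u$ as in Section \ref{sobolev}. These choices decide which exponents receive the $+1$ shift and the overall sign. I would fix this by an explicit check in the model case of a single linear chart $\rr^n$ with diagonal flow. There, one computes directly that the generalized eigendistributions are $\partial^{\beta}\delta(x_{\Delta^+})\, x_{\Delta^-}^{\gamma}$. With the flow acting as $x_\alpha\mapsto e^{t\alpha(A)}x_\alpha$, these have eigenvalues $\sum_{\Delta^+}(\beta_\alpha+1)\alpha(A)-\sum_{\Delta^-}\gamma_\alpha\alpha(A)$ in the chosen convention, which matches the statement. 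Genericity of the eigenvalue sums is not needed, since Dang–Rivière's result yields the spectrum as a set regardless of multiplicity.
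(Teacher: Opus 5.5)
Your proposal follows the paper's own argument: read off the Lyapunov exponents $\alpha(A)$, $\alpha\in w\Delta^+_{\mc{C}}$, at each $wP_{\mc{C}}$ from the linearizing charts \eqref{coordinate} (the adaptation of Proposition 1.4 of \cite{DKV}), insert them into the Dang--Rivi\`ere description of the spectrum, and take the union over $\mf{w}$. The one thing to tidy is the convention you flag yourself: the stated set, with the shift $n_\alpha+1$ on $w\Delta^+_{\mc{C}}\cap\Delta^+$, comes from your model states $\partial^\beta\delta(x_{\Delta^+})x_{\Delta^-}^\gamma$, which are carried by $W^s(wP)=\exp\mf{n}^-wP=\{x_{\Delta^+}=0\}$ (conormal directions in the paper's $E^*_u=(\mf{n}^-x)^\perp$), not by $W^u(w)$ as your first paragraph says. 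States conormal to $W^u(w)$ under $\phi^A_{t*}$ would instead put the shift on $\Delta^-$, and already for $\mathbb{P}_2$ that gives a different set, so it is your model check, not the first-paragraph description, that matches the statement.
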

The resonance spectrum for single flows will allow us to compute exactly the resonance spectrum for the multi-flow in the following sections. Remark that the set of resonances might constitute only a small part of the whole $\text{span}_{\mathbb{N}}\Delta^+$, as we will see in a later section where we compute Weyl laws on these sets.

The spectrum of a commuting family of operators is defined in terms of the cohomology of an exact sequence. 
We give a very brief review of the construction and collect relevant lemmas in the Appendix for convenience of the reader. A more detailed and motivated exposition can be found in \cite{GGHW}. 

Consider $\mf{a}_{\cc}= \mf{a}\otimes_{\rr} \cc$, and $\Lambda\mf{a}^*_{\cc}=\bigoplus_{\ell} \Lambda^{\ell}\mf{a}^*_{\cc}$ the exterior algebra of the dual $\mf{a}^*_{\cc}$ . We will denote $V\Lambda = V\otimes \mf{a}^*_{\cc}$, for any topological vector space $V$. If $E\rightarrow F_{\mc{C}}$ is a smooth vector bundle, we define the differential
\[
    d_{\mbf{X}-\lambda}: 
                \left\{ \begin{array}{rcl}
                \cinf(F_{\mc{C}}, E)\Lambda & \rightarrow & \cinf(F_{\mc{C}}, E)\Lambda, \\ 
                u\otimes w & \mapsto & (\mbf{X}-\lambda)u\wedge w.
                
                \end{array} \right.
\]
where $(\mbf{X}-\lambda)u\in\mf{a}^*_{\cc}$ is defined by $(\mbf{X}-\lambda)u(A)=X_Au-\lambda(A)u$. Remark that $d_{\mbf{X}-\lambda}$ extends naturally to $\cminf\Lambda$ by duality. The differential $d_{\mbf{X}-\lambda}$ is nilpotent: 
\[
d_{\mbf{X}-\lambda}d_{\mbf{X}-\lambda}=0. 
\]
Since $d_{\mbf{X}-\lambda}$ is a differential operator, it preserves the wavefront set of distribution and in particular it acts on the space $\cminf_{E^*_u}(F_{\mc{C}}, E)$ of distributions with wavefront set contained in the cotangent bundle. The interested reader can refer to \cite{mahd} for an overview on these spaces of distributions. 
These remarks allow us to consider the exact sequence
\begin{equation*}0\rightarrow\cminf_{E^*_u}\Lambda^0\xrightarrow{d_{\mbf{X}-\lambda}}\cminf_{E^*_u}\Lambda^1\xrightarrow{d_{\mbf{X}-\lambda}}...\xrightarrow{d_{\mbf{X}-\lambda}}\cminf_{E^*_u}\Lambda^{\kappa}\rightarrow0.
\end{equation*}

We define the Pollicott-Ruelle-Taylor resonances of the system as the points where the cohomology is not trivial:
\begin{equation} \label{sptaylor}
    \Res(\mc{C})=\left\{\lambda\in\mf{a}^*_{\cc} : \ 
\ker_{\cminf_{E^*_u}\Lambda}d_{\mbf{X}-\lambda}/\ran_{\cminf_{E^*_u}\Lambda}d_{\mbf{X}-\lambda} \neq 0  \right\}.
\end{equation}
Similarly, the Pollicott-Ruelle-Taylor resonant space is defined by the cohomology:
\begin{equation} \label{restaylor}
    \Res_{\mc{C}}(\lambda)=\ker_{\cminf_{E^*_u}\Lambda}d_{\mbf{X}-\lambda}/\ran_{\cminf_{E^*_u}\Lambda}d_{\mbf{X}-\lambda}.
\end{equation}
It might seem unjustified to restrict the action to this space of distributions. The fundamental reason is that Pollicott-Ruelle-Taylor resonances for hyperbolic systems always have wavefront set contained in $\cminf_{E^*_u}(F_{\mc{C}}, E)$, as in \cite{FS}, \cite{FRS}, \cite{DR}, \cite{DR2}, and the phenomenon persists for higher rank systems, as in \cite{GGHW}.

In this section we will prove the main theorem of the paper.
\begin{thm}\label{grosso}
    The Pollicott-Ruelle-Taylor spectrum is discrete, and to each element of the spectrum is associated at least one resonant state in $\cminf_{E^*_u}$:
    \begin{equation*}
        \lambda\in\Res(\mc{C}) \ \Longleftrightarrow \ \exists u\in\cminf_{E^*_u}: \ (\mbf{X}-\lambda)u=0.
    \end{equation*}
    Moreover, the set of resonances can be written explicitly as         
    \begin{eqnarray*}
            \Res(\mc{C}) &=& 
            \bigcup_{w\in\mf{w}/\mf{w}_{\mc{C}}} \left\{ \sum_{w\Delta^+_{\mc{C}}\cap\Delta^-} n_{\alpha}\alpha -\sum_{w\in\Delta^+_{\mc{C}}\cap\Delta^+} (n_{\alpha}+1) \alpha: \ n_{\alpha}\in\nn    
            \right\}
    \end{eqnarray*}
\end{thm}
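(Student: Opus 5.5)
The plan is to follow the strategy of \cite{GGHW}, using the anisotropic spaces $\mc{H}_N$ of Section \ref{sobolev} and the uniform escape function of Proposition \ref{fuggitesciocchi}. I will reduce the Taylor spectrum of the commuting family to a Fredholm problem for a single elliptic-like operator, and then identify the resulting joint spectrum with the single-flow spectra of Proposition \ref{risonanze}.

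\textbf{Step 1: Fredholm setup.} Fix $A_0\in\mf{a}_{++}$ and a conical neighbourhood $\mc{U}$ as in Proposition \ref{fuggitesciocchi}, and choose a basis $A_1,\dots,A_\kappa$ of $\mf{a}$ contained in $\mc{U}$. By Proposition \ref{fuggitesciocchi}(4) and the standard radial/propagation estimates of \cite{FRS}, \cite{DR}, each $X_{A_j}-\lambda(A_j)$ is Fredholm of index $0$ on $\mc{H}_N$ as soon as $\mathrm{Re}\,\lambda(A_j)>-c_0N$. Proposition \ref{bendef} gives the closed extensions. I then form the Koszul-type complex of $d_{\mbf{X}-\lambda}$ on $\mc{H}_N\Lambda$ together with the Laplacian
\[
\Delta_\lambda = d_{\mbf{X}-\lambda}^*d_{\mbf{X}-\lambda}+d_{\mbf{X}-\lambda}d_{\mbf{X}-\lambda}^* ,
\]
whose principal part is $\sum_j (X_{A_j}-\lambda(A_j))^*(X_{A_j}-\lambda(A_j))$ acting diagonally on forms. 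The joint escape estimate shows that $\Delta_\lambda$ is Fredholm on a suitable domain, and that it depends holomorphically on $\lambda$ in the half-space $\{\mathrm{Re}\,\lambda(A_j)>-c_0N\ \forall j\}$. This part is essentially a repetition of \cite{GGHW}. From it I get two conclusions. First, the cohomology of the complex on $\mc{H}_N$ is finite dimensional. Second, analytic Fredholm theory gives discreteness once I know that $\Delta_\lambda$ is invertible for some $\lambda$; invertibility for $\mathrm{Re}\,\lambda(A_0)\gg0$ follows from the semigroup bound for $e^{tX_{A_0}}$.

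\textbf{Step 2: from $\mc{H}_N$ to $\cminf_{E^*_u}$, and the degree-zero characterization.} Elements of the kernel lie in $\bigcap_N\mc{H}_N\subset\cminf_{E^*_u}$, via the radial estimate at the source $E^*_u$. Conversely, any $u\in\cminf_{E^*_u}$ lies in some $\mc{H}_N$. Hence the cohomology on $\cminf_{E^*_u}$ agrees with the cohomology on $\mc{H}_N$ for $N$ large. For the equivalence in the theorem, one direction is trivial: a joint eigenvector is a nonzero class in degree $0$. For the other direction I would use the lemma of \cite{GGHW} (collected in the Appendix) stating that, for a Fredholm complex of commuting operators, nonvanishing of some cohomology group forces $\lambda$ to be a joint eigenvalue. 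Concretely, I would argue by induction on $\kappa$ through the long exact sequence associated with adding one vector field. In each step, $\ker(X_{A_j}-\lambda(A_j))$ restricted to the generalized eigenspace is finite dimensional, so the commuting nilpotent parts admit a common eigenvector.

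\textbf{Step 3: explicit set.} If $(\mbf{X}-\lambda)u=0$ with $u\in\cminf_{E^*_u}$, then for each $A\in\mc{U}$ the number $\lambda(A)$ is a Pollicott--Ruelle resonance of $\phi^A$. By Proposition \ref{risonanze}, $\lambda(A)=\mu_A(A)$ for some $\mu_A$ in the displayed lattice-type set. A $\lambda$-dependent finite subset of that set contains the $\mu$ with $\mu(A)=\lambda(A)$. Since this holds on the open set $\mc{U}$ and there are countably many candidates $\mu$, a Baire argument gives $\lambda=\mu$ for a single $\mu$ (the sign convention matching the statement). For the converse I construct joint resonant states explicitly in the linearizing chart \eqref{coordinate} around $wP$. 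I take
\[
u=\prod_{\alpha\in w\Delta^+_{\mc{C}}\cap\Delta^-}x_\alpha^{n_\alpha}\prod_{\alpha\in w\Delta^+_{\mc{C}}\cap\Delta^+}\delta^{(n_\alpha)}(x_\alpha),
\]
which is supported on the closure of the unstable cell. Using the homogeneity of these distributions under the diagonal flow, this $u$ is a joint eigendistribution for every $A$ at once. Its wavefront set is conormal to the unstable manifold, hence contained in $E^*_u$. To make it global, I would take the Dang--Rivière resonant state of a single generic $A_0$, which is the extension of this current across the boundary of the cell. I would then check that it is a joint eigenvector, using that the spectral projector commutes with all $X_A$ and that the relevant eigenspace is one dimensional for generic $A_0$.

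\textbf{Main obstacle.} I expect the hardest part to be this global extension in Step 3, together with the uniformity in Step 1 across a basis of $\mf{a}$. The escape function only works for $A\in\mc{U}$; I handle this by choosing the basis inside $\mc{U}$. For the extension, if the one-dimensionality fails because distinct $\mu$ take the same value at $A_0$, I would perturb $A_0$ within $\mc{U}$ to separate them.
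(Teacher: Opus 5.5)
\textbf{Step 1: discreteness does not follow.} Your discreteness argument fails for two reasons. First, $\Delta_\lambda$ is not holomorphic in $\lambda$, because the $\mc{H}_N$-adjoint of $d_{\mbf{X}-\lambda}$ involves $\bar\lambda$. Second, and more fundamentally, analytic Fredholm theory cannot produce a discrete joint spectrum when $\kappa=\dim\mf{a}\geq 2$. For a holomorphic family of index-zero Fredholm operators on $\mf{a}^*_{\cc}\simeq\cc^{\kappa}$, the non-invertibility set is locally the zero set of a single holomorphic function (a Grushin determinant). It is therefore a hypersurface whenever nonempty, since zeros of holomorphic functions in several variables are never isolated. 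Already $X_{A_0}-\lambda(A_0)$ fails to be invertible along whole affine hyperplanes of $\lambda$'s. You also give no mechanism for the Fredholm property of the non-elliptic second-order operator $\Delta_\lambda$ on $\mc{H}_N$.

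The paper, following \cite{GGHW}, avoids both problems. The operator $Q_T(\lambda)$ is built from $e^{-tX_{A_0}}$, so it commutes exactly with every $X_B$. Then $d_{\mbf{X}-\lambda}\delta_{Q_T(\lambda)}+\delta_{Q_T(\lambda)}d_{\mbf{X}-\lambda}=F=I+R+K$, with $\|R\|\leq 1/2$ and $K$ compact by the escape function and Egorov. Lemmas 3.9--3.12 of \cite{GGHW} then reduce the complex to the finite-dimensional range of $\Pi_0$. In this route the escape estimate is needed only for the single field $X_{A_0}$, and the other $X_B$ enter only through exact commutation. The underlying reason for discreteness is that the joint spectrum lies over the discrete spectrum of $X_{A_0}$. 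Each finite-dimensional generalized eigenspace of $X_{A_0}$ is invariant under all $X_A$, so it carries only finitely many joint eigenvalues. You already use this reduction in Step~2; it, not analytic Fredholm theory, is what gives discreteness.

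\textbf{Step 3: the converse.} Your countable-union argument is a correct and cleaner version of the inclusion $\Res(\mc{C})\subseteq S$ in Proposition~\ref{explicit}. Note that the paper's proof stops there and never addresses the reverse inclusion. Your converse does not work as written.
\begin{enumerate}
\item Your local $u$ is supported on $\{x_\alpha=0:\ \alpha\in w\Delta^+_{\mc{C}}\cap\Delta^+\}$. Those coordinates are the expanding ones, so this set is $W^s(wP)$, not the unstable cell. Whenever a $\delta$-factor is present, $u$ is singular at $wP$ with wavefront set in $\{\xi_\alpha=0:\ \alpha\in w\Delta^+_{\mc{C}}\cap\Delta^-\}$. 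This meets $E^*_u(wP)=\{\xi_\alpha=0:\ \alpha\in w\Delta^+_{\mc{C}}\cap\Delta^+\}$ only at $0$, so $u\notin\cminf_{E^*_u}$.
\item States in $\cminf_{E^*_u}$ carry their $\delta$-factors in the contracting coordinates. On functions they have $X_A$-eigenvalues $\sum_{\Delta^+}n_\alpha\alpha(A)-\sum_{\Delta^-}(n_\alpha+1)\alpha(A)$. The displayed set is recovered, for instance, on densities (where $\mathrm{div}\,X_A=\sum_{w\Delta^+_{\mc{C}}}\alpha(A)$ is added) with the convention $(\mbf{X}+\lambda)u=0$ used in the proof of Proposition~\ref{explicit}. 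So the bundle and sign conventions must be fixed first.
\item One-dimensionality of the eigenspace for generic $A_0$ is false. Roots are linearly dependent, e.g.\ $\alpha_{1,3}=\alpha_{1,2}+\alpha_{2,3}$, so distinct $(n_\alpha)$ can give the same functional, and no perturbation of $A_0$ separates equal functionals.
\end{enumerate}
You need neither one-dimensionality nor any explicit construction. Take $A_0$ outside the countably many hyperplanes $\ker(\mu-\nu)$, $\mu\neq\nu$ in $S$. By Proposition~\ref{risonanze} (read in the convention matching $S$), the generalized eigenspace of $X_{A_0}$ at $\mu(A_0)$ is nonzero. It is finite-dimensional and invariant under every $X_A$, hence contains a joint eigenvector. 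Its joint eigenvalue $\nu$ lies in $S$ by your own Baire argument and satisfies $\nu(A_0)=\mu(A_0)$, so $\nu=\mu$.
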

The strategy of the proof is classical and consists in the reduction to a family of Hilbert spaces $\mc{H}_N$ where the above properties are relatively simple to establish. The result is then induced on the intrinsic cohomology space. 

Let us choose an escape function $G=m\log\croc{\xi}$ compatible with $A_0$ and with degrowth constant $c_0$ as in proposition \ref{fuggitesciocchi}, and let $\mc{H}_{N}$ be the anisotropic Sobolev space constructed starting from $\mc{A}_{Nm}=Op(e^{NG})+R$ as per section \ref{sobolev}. Define $\mc{D}(d_{\mbf{X}})=\{u\in\mc{H}_N: \ \mbf{X}u\in\mc{H}_N\Lambda^1\}$, which by Proposition \ref{bendef} is the unique closed domain for $d_{\mbf{X}}$, and remark that since $d_{\mbf{X}}$ is nilpotent, $d_{\mbf{X}}: \mc{D}(d_{\mbf{X}})\Lambda \rightarrow \mc{D}(d_{\mbf{X}})\Lambda$. We are then led to consider the complex 
\begin{equation}\label{Dcomplex}
    0\rightarrow \mc{D}(d_{\mbf{X}})\Lambda^0 \xrightarrow{d_{\mbf{X}-\lambda}} \mc{D}(d_{\mbf{X}})\Lambda^1 \xrightarrow{d_{\mbf{X}-\lambda}} ... \xrightarrow{d_{\mbf{X}-\lambda}} \mc{D}(d_{\mbf{X}})\Lambda^{\kappa} \xrightarrow{d_{\mbf{X}-\lambda}} 0.
\end{equation}
We define the spectrum $\sigma_{\mc{H}_N}(\mbf{X})$ of $\mbf{X}$ on $\mc{H}_N$ exactly as in \ref{sptaylor}, as the points with non-trivial cohomology.

Fix any $A_0\in\mf{a}_{++}$. Let us define the operator 
\begin{equation*}
    Q'_T(\lambda)=\int_0^Te^{-tX_{A_0}}e^{-t\lambda(A_0)}dt\in\mc{L}(\cinf(F_{\mc{C}}, E)),
\end{equation*}
and let us immediately verify that 
\begin{equation*}
    X_{A_0}Q'_T(\lambda)=Q'_T(\lambda)X_{A_0}=I-e^{-TX_{A_0}}e^{-T\lambda(A_0)}.
\end{equation*} 
Since $\phi^A$ and $\phi^B$ commute, then $e^{-tX_{A_0}}$ and $ X_B$ commute and as a result
\begin{equation*}
    X_BQ'_T(\lambda)=Q_T'(\lambda)X_B.
\end{equation*}
Next let us define $Q_T(\lambda):\mf{a}\rightarrow\mc{L}(\cinf(F_{\mc{C}}, E))$ by $Q_T(\lambda)(A)=\croc{A_0, A}Q'_T(\lambda)$
and let the divergence $\delta_{\mbf{Q}_T(\lambda)}$ be the $L^2$ formal adjoint of $d_{Q_T(\lambda)}$. 
The divergence can be in fact defined algebraically and has similar properties to that of a differential, but we don't need them here. By Lemma 3.7 of \cite{GGHW} we obtain that
\begin{equation} \label{cos1}
    d_{\mbf{X}-\lambda}\delta_{Q_T(\lambda)}+\delta_{Q_T(\lambda)}d_{\mbf{X}-\lambda}=(1-e^{-TX_{A_0}}e^{-T\lambda(A_0)})\otimes I.
\end{equation}
We give a name to the diagonal $\Lambda$-scalar operator on the right hand side: $F = (1-e^{-TX_{A_0}}e^{-T\lambda(A_0)})\otimes I$. 

The action of $e^{-TX_{A_0}}$ on $\mc{H}_N$ is conjugated to the action of $\mc{A}_Ne^{-TX_{A_0}}\mc{A}_N^{-1}$ on $L^2$.
We can then rewrite 
\begin{equation*}
    \mc{A}_Ne^{-TX_{A_0}}\mc{A}_N^{-1}= e^{-TX_{A_0}}e^{TX_{A_0}}\mc{A}_Ne^{-TX_{A_0}}\mc{A}_N^{-1} = e^{-TX_{A_0}}B_t\mc{A}_N^{-1}.
\end{equation*}
By Egorov's theorem $B_t$ is a pseudo-differential operator with principal symbol $\sigma(B_t)=\sigma(\mc{A}_N)\circ \phi^{A_0}_t=e^{NG\circ\phi^{A_0}_t}$. In particular $\sigma(B_t\mc{A}_N^{-1})=\sigma(B_t)\sigma(\mc{A}_N)^{-1}=e^{N(G\circ\phi^{A_0}_t-G)}$. Due to the choice of $G$ we know that $N(G\circ\phi^{A_0}_t-G)\leq -TNc_0$, which is to say that $B_t\mc{A}_N^{-1}\in\Psi^0$ and consequently we can decompose it into $B_t\mc{A}_N^{-1}=\tilde{K}(\lambda)+\tilde{R}(\lambda)$, with $\tilde{K}$ compact and $\|\tilde{R}\|_{\mc{L}(L^2)}\leq \limsup_{\xi\rightarrow\infty} |\sigma(B_t\mc{A}^{-1}_N)|=\limsup_{\xi\rightarrow\infty}e^{N(G\circ\phi^{A_0}_t-G)}\leq e^{-TNc_0}$.
Returning to the action on $\mc{H}_N$ we obtained the decomposition
\begin{eqnarray*}
    F(\lambda)=I+R(\lambda)+K(\lambda), \\
    R(\lambda)=e^{t\lambda(A_0)}\mc{A}_N^{-1}e^{-TX_{A_0}}\tilde{R}\mc{A}_N, \\
    K(\lambda)=e^{t\lambda(A_0)}\mc{A}_N^{-1}e^{-TX_{A_0}}\tilde{K}\mc{A}_N,
\end{eqnarray*}
with $K$ compact on $\mc{H}_{Nm}$ and $\|R\|_{\mc{L}(\mc{H}_{Nm})}=\|e^{t\lambda(A_0)}e^{-TX_{A_0}}\tilde{R}\|_{\mc{L}(L^2)}$.
With the bound $\|e^{-TX_{A_0}}\|_{\mc{L}(L^2)}\leq Ce^{TC_{L^2}(A_0)}$ we then find
\begin{equation*}
    \|R\|_{\mc{L}(\mc{H})}= \|e^{-T\lambda(A_0)}e^{-TX_{A_0}}\tilde{R}\|_{\mc{L}(L^2)} \leq Ce^{T(-\lambda(A_0)+C_{L^2}-Nc_0)}.
\end{equation*}
Where \[
C_{L^2}(A)=\limsup_{t\rightarrow+\infty} \frac{1}{t}\log \|(\phi_t^A)^*\|_{\mc{L}(L^2)}. 
\]
If $Re(\lambda(A_0))\geq C_{L^2}-Nc_0+\delta$, by taking $T$ large enough we obtain $\|R\|_{\mc{L}(\mc{H}_N)}\leq 1/2$. 
This decomposition gives us very strong information on the complex \ref{Dcomplex}.
By Lemmas 3.9, 3.10 and 3.12 of \cite{GGHW} we get that: 
\begin{itemize}
    \item By Lemma 3.9, the cohomology of \eqref{Dcomplex} is finite-dimensional;
    \item By Lemma 3.9 the projector $\Pi_0$ on the eigenvalue $0$ of $F$ on $\mc{H}\Lambda$ is bounded on $\mc{H}\Lambda$ and defines an isomorphism
    \[
    \Pi_0: \ker_{\mc{D}(d_{\mbf{X}})}d_{\mbf{X}-\lambda}/\ran_{\mc{D}(d_\mbf{X})}d_{\mbf{X}-\lambda}\longrightarrow \ker_{\ran\ \Pi_0}d_{\mbf{X}-\lambda}/\ran_{\ran \ \Pi_0}d_{\mbf{X}-\lambda} ;
    \] 
    \item By Lemma 3.10, if an element is in the spectrum it is a true eigenvalue: 
    \begin{eqnarray*}
    \lambda\in\mathcal{F}_{N, A_0, \ \delta} \ \text{and} \ \ker_{\mc{D}(d_{\mbf{X}})}d_{\mbf{X}-\lambda}/\ran_{\mc{D}(d_{\mbf{X}})}d_{\mbf{X}-\lambda}\neq 0 \  \\ \Rightarrow \ \exists u\in\mc{H}_N, u\neq0 \ \text{such that} \ d_{\mbf{X}-\lambda}u=0.
    \end{eqnarray*}
    \item By Lemma 3.12, the spectrum $\sigma_{\mc{H}_N}(X)$ is discrete.
\end{itemize}

What we now have to do is to push these properties from the complex on $\mc{H}_N$ to the intrinsic complex on $\cminf_{E^*_u}$, but that is a consequence of the following proposition, whose proof follows verbatim the proof of Proposition 4.10 in \cite{GGHW}. 
\begin{prop}
    For all $\lambda\in\mc{F}_{N, A_0}$ there is an isomorphism
    \begin{equation*}
                \quad \ker_{\mc{D}(d_{\mbf{X}})\Lambda^j}d_{\mbf{X}-\lambda}/\ran_{\mc{D}(d_{\mbf{X}})\Lambda^j}d_{\mbf{X}-\lambda} \simeq \ker_{\cminf_{E^*_u}\Lambda^j}d_{\mbf{X}-\lambda}/\ran_{\cminf_{E^*_u}\Lambda^j}d_{\mbf{X}-\lambda} 
    \end{equation*}
\end{prop}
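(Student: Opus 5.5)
The plan is to follow the argument of Proposition 4.10 in \cite{GGHW}, using the inclusion map of complexes together with the homotopy formula \eqref{cos1}. First I would note that, for $N$ large and the escape function fixed, $\mc{D}(d_{\mbf{X}})\Lambda$ and $\cminf_{E^*_u}\Lambda$ are not nested. Instead I would compare both complexes through the finite-dimensional complex $\ran\,\Pi_0$. Here $\Pi_0$ is the spectral projector of $F(\lambda)$ onto the eigenvalue $0$, which exists because $F=I+R+K$ with $\|R\|\le 1/2$ and $K$ compact for $\lambda\in\mc{F}_{N,A_0}$. By Lemma 3.9 of \cite{GGHW}, $\Pi_0$ already induces an isomorphism from the $\mc{H}_N$-cohomology to the cohomology of $d_{\mbf{X}-\lambda}$ restricted to $\ran\,\Pi_0$. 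It therefore suffices to prove two things. First, $\ran\,\Pi_0\subset \cminf_{E^*_u}\Lambda$. Second, the inclusion $\ran\,\Pi_0\hookrightarrow\cminf_{E^*_u}\Lambda$ induces an isomorphism in cohomology.

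For the first point, I would show that elements of $\ran\,\Pi_0$ lie in $\mc{H}_{N'}$ for every $N'\ge N$, and for every admissible escape function $G$. Then the inclusion $\bigcap_{G,N}\mc{H}_N\subset\cminf_{E^*_u}$ stated in Section \ref{sobolev} gives the claim. The argument uses the fact that $F(\lambda)$ acts on all spaces $\mc{H}_{N'}$ compatibly and is Fredholm of index zero there for $\lambda\in\mc{F}_{N,A_0}\subset\mc{F}_{N',A_0}$. Its generalized $0$-eigenspaces on $\mc{H}_N$ and $\mc{H}_{N'}$ then coincide. This follows from density of $\cinf$ and the standard propagation or radial-estimate argument, or more simply from the identity $\Pi_0=\frac{1}{2\pi i}\oint (z-F)^{-1}dz$ being independent of the space on a common dense subset. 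Alternatively, one can use a microlocal bootstrap: an element $u$ with $F^ku=0$ satisfies $u=-(R+K)$-type identities, and $e^{-TX_{A_0}}$ propagates regularity away from $E^*_u$ toward $E^*_s$ by the escape estimate (4). That gives $\mathrm{WF}(u)\subset E^*_u$.

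For the second point, injectivity and surjectivity both come from \eqref{cos1}. For surjectivity, take $u\in\cminf_{E^*_u}\Lambda^j$ with $d_{\mbf{X}-\lambda}u=0$. Such a $u$ lies in some $\mc{H}_{N'}$ for a suitable escape function with $N'$ large, since its wavefront set is in $E^*_u$ and it has some finite Sobolev order. Writing $u=\Pi_0u+(1-\Pi_0)u$, on $\ran(1-\Pi_0)$ the operator $F$ is invertible and commutes with $d_{\mbf{X}-\lambda}$ and $\delta_{Q_T}$. Hence $(1-\Pi_0)u=d_{\mbf{X}-\lambda}\,\delta_{Q_T}F^{-1}(1-\Pi_0)u$, which is exact. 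For injectivity, suppose $v\in\ran\,\Pi_0$ is closed and $v=d_{\mbf{X}-\lambda}w$ with $w\in\cminf_{E^*_u}$. Placing $w$ in some $\mc{H}_{N'}$ and applying $\Pi_0$ shows that $v=d_{\mbf{X}-\lambda}\Pi_0w$, so $v$ is exact within $\ran\,\Pi_0$.

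The main obstacle is the independence of $\Pi_0$ (and of $\ran\,\Pi_0$) from the choice of $N'$ and of the escape function $G$. It requires that every $u\in\cminf_{E^*_u}$ belongs to some $\mc{H}_{N'}$ on which $\lambda$ is still in the good half-plane. It also requires that the projectors agree on intersections. I would handle this as \cite{GGHW} does. One notes that $\mc{F}_{N,A_0}$ grows with $N$, and that $C^\infty$ is dense in all the spaces. The resolvent-type expressions for $\Pi_0$ agree on $C^\infty$ and hence coincide by continuity on $\mc{H}_N\cap\mc{H}_{N'}$.
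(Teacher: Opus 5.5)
Your route is the one the paper intends, since it defers verbatim to Proposition 4.10 of \cite{GGHW}. You pass through the finite-dimensional complex on $\ran\,\Pi_0$, show that $\ran\,\Pi_0$ is independent of $(N,G)$ and hence lies in $\cminf_{E^*_u}$, and then compare this complex with the intrinsic one. The injectivity half is fine as written.

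The surjectivity half has a hole. From $d_{\mbf{X}-\lambda}u=0$ you get $(1-\Pi_0)u=d_{\mbf{X}-\lambda}w$ with $w=\delta_{Q_T(\lambda)}F^{-1}(1-\Pi_0)u$. A priori, however, $w$ is only an element of $\mc{D}(d_{\mbf{X}})\Lambda^{j-1}\subset\mc{H}_{N'}\Lambda^{j-1}$, and $\mc{H}_{N'}\not\subset\cminf_{E^*_u}$, as you note yourself. Exactness in the $\mc{H}_{N'}$-complex says nothing about the class of $u$ in $\ker_{\cminf_{E^*_u}}d_{\mbf{X}-\lambda}/\ran_{\cminf_{E^*_u}}d_{\mbf{X}-\lambda}$; for that you need $w\in\cminf_{E^*_u}\Lambda^{j-1}$. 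Your ``main obstacle'' paragraph treats only the space-independence of $\Pi_0$, not of the reduced inverse $F^{-1}(1-\Pi_0)$, so this step is unproved as written.

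The repair uses the same mechanism.
\begin{enumerate}
\item $Q'_T(\lambda)$ is an average of pull-backs by $\phi^{A_0}_t$ for $t\in[0,T]$. It therefore preserves $\cminf_{E^*_u}$, because $E^*_u$ is invariant under the lifted flow.
\item $F^{-1}(1-\Pi_0)=\frac{1}{2\pi i}\oint_{|z|=r}z^{-1}(F-z)^{-1}\,dz$ is again a resolvent integral.
\item $g=(1-\Pi_0)u$ lies in $\cminf_{E^*_u}$. Hence it belongs to $\mc{H}_{N''G''}$ for every admissible $G''$ and all sufficiently large $N''$. Keep $T$ fixed, so that $F$ is literally the same operator; $\|R\|\le 1/2$ still holds on $\mc{H}_{N''G''}$ once $N''$ is large.
\item The reduced inverses on these spaces agree on $g$. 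Consequently $F^{-1}(1-\Pi_0)g\in\bigcap_{G'',N''}\mc{H}_{N''G''}\subset\cminf_{E^*_u}$.
\end{enumerate}

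You should also make explicit why the resolvents on two such spaces agree on $\cinf$ for $z$ near $0$. A direct argument at fixed $z$ fails, because $z-F$ need not be injective on $\mc{H}_{N'}+\mc{H}_{N''}$. Instead, equality holds for $|z|$ large by the Neumann series. It then propagates by uniqueness of meromorphic continuation on the connected region where $F-z$ is Fredholm on both spaces. Extending from $\cinf$ to $\mc{H}_{N'G'}\cap\mc{H}_{N''G''}$ uses density of $\cinf$ in the intersection norm, for instance via $\chi(\ve D)$ regularisation. This argument both justifies your range-independence claim for $\Pi_0$ and closes the surjectivity gap.
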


This shows that the spectra we discovered on the spaces $\mc{H}_{N}$ are intrinsic to the flow. As a corollary, the Taylor spectrum is discrete and pure point, i.e. made of eigenvalues with finite-dimensional eigenspaces. Moreover, the resonant states are distributions in $\cminf_{E^*_u}$, so that the first part of Theorem \ref{grosso} is proven. 

We have thus found many regularity properties of the spectrum in an abstract way.

This last result is key because it allows very easily to identify multi-resonances with the one-dimensional resonances we had found in the introduction. 
\begin{prop} \label{explicit}
    The Resonances of the flow are  
        \begin{eqnarray*}
            \Res_{\phi} &=& 
            \bigcup_{w\in\mf{w}/\mf{w}_{\mc{C}}} \left\{ \sum_{w\Delta^+_{\mc{C}}\cap\Delta^-} n_{\alpha}\alpha -\sum_{w\Delta^+_{\mc{C}}\cap\Delta^+} (n_{\alpha}+1) \alpha: \ n_{\alpha}\in\nn    
            \right\} \\
            &\subset& \left\{ -\sum_{\alpha\in\Pi} n_{\alpha}\alpha: \ n_{\alpha}\in\nn \right\} 
        \end{eqnarray*}
\end{prop}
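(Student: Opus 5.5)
We identify the joint spectrum with the one-dimensional resonances via the eigenvector characterisation from Theorem \ref{grosso}. We have already shown that $\lambda\in\Res(\mc{C})$ if and only if there is $0\neq u\in\cminf_{E^*_u}$ with $X_Au=\lambda(A)u$ for all $A\in\mf{a}$. The proof then has two inclusions.

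\emph{Joint resonances are of the stated form.} Let $u$ be a joint resonant state. For every $A\in\mf{a}_{++}$, $u$ is a resonant state of the single flow $\phi^A$ in the anisotropic space, so $\lambda(A)\in\Res(A)$ by Proposition \ref{risonanze}, after matching sign conventions: the paper's $\mbf{X}-\lambda$ versus $-X_A-\lambda$ accounts for the overall minus sign. Hence the linear functional $\lambda$ evaluated on each $A$ of the open cone $\mf{a}_{++}$ lies in the countable set
\[
\Big\{\textstyle\sum_{\alpha\in w\Delta^+_{\mc{C}}\cap\Delta^-} n_\alpha\alpha-\sum_{\alpha\in w\Delta^+_{\mc{C}}\cap\Delta^+}(n_\alpha+1)\alpha\Big\}
\]
evaluated at $A$. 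The key step is upgrading this pointwise statement to equality of functionals. For each $A$ pick $\mu_A$ from this countable family $\mc{M}$ with $\lambda(A)=\mu_A(A)$. Then $\mf{a}_{++}$ is covered by the countably many closed sets $\{A:\lambda(A)=\mu(A)\}$, $\mu\in\mc{M}$, each a hyperplane unless $\mu=\lambda$. By Baire's theorem (or simply by measure), one of them has nonempty interior, so $\lambda=\mu$ for some $\mu\in\mc{M}$.

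\emph{Every element of the set is a joint resonance.} This is the main obstacle. I would construct the states explicitly in the linearizing chart \eqref{coordinate} at the critical point $wP$. Following \cite{DR}, resonant states at the bottom of the spectrum are of the form $\prod_{\alpha\in w\Delta^+_{\mc{C}}\cap\Delta^+}\delta^{(n_\alpha)}(x_\alpha)\prod_{\alpha\in w\Delta^+_{\mc{C}}\cap\Delta^-}x_\alpha^{n_\alpha}$, multiplied by a cutoff. These are supported on the unstable manifold $W^u(wP)$, where the flow is linear, and are simultaneously homogeneous for all $\phi^A$ since the chart diagonalises every $A$ at once. Hence they are joint eigenvectors with eigenvalue $\mu$ in the chart. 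The cutoff breaks exact invariance, so I would instead use Dang–Rivière's spectral projector $\pi_{\mu(A_0)}$ for a single generic $A_0$. Its range is finite dimensional and commutes with every $X_B$, since the flows commute. On this range the $X_B$ form a commuting family of matrices, which have a joint eigenvector. The local computation shows that the generalized eigenvalue $\mu$ actually occurs: the joint eigenvalue restricted to the germ at $wP$ is $\mu$, and $\mu$ is the unique functional in the finite set $\mc{M}\cap\{\nu(A_0)=\mu(A_0)\}$ that is consistent with the leading term, for generic $A_0$. The delicate point is this genericity argument: choosing $A_0$ so that distinct elements of $\mc{M}$ take distinct values, which holds off a countable union of hyperplanes.

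Finally, the inclusion into $\{-\sum_{\alpha\in\Pi}n_\alpha\alpha\}$ is immediate. Each $\alpha\in\Delta^-$ appears with coefficient $n_\alpha\geq0$, contributing $n_\alpha\alpha=-n_\alpha(-\alpha)$ with $-\alpha\in\Delta^+$. Each $\alpha\in\Delta^+$ appears with coefficient $-(n_\alpha+1)$. Every positive root is an $\nn$-combination of the simple roots $\Pi$, so the whole sum is minus an $\nn$-combination of simple roots.
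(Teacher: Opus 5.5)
Your first inclusion follows the paper's argument. The paper also restricts a joint resonant state to each single flow, invokes Proposition~\ref{risonanze}, and upgrades by linearity. It asserts that ``since $\mf{a}^+$ is open, the $n_\alpha(A)$ are constant.'' Your Baire (or measure) argument over the countable family $\mc{M}$ is the correct way to make that step rigorous. The coefficients $n_\alpha(A)$ need not be unique, let alone constant; what one gets is a single $\mu\in\mc{M}$ agreeing with $\lambda$ on a set with interior.

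Where you genuinely go beyond the paper is the reverse inclusion. The paper's proof stops after $\Res(\mc{C})\subset\bigcup_w\{\cdots\}$ and never shows that every such functional is a joint resonance. It also does not spell out the inclusion into $-\mathrm{Span}_{\nn}\Pi$, which you do correctly. Your reverse-inclusion argument works, but its real content is simpler than you present it:
\begin{enumerate}
\item Choose $A_0$ in the open chamber with $\nu\mapsto\nu(A_0)$ injective on $\mc{M}$.
\item Proposition~\ref{risonanze}, read as an equality, gives $\pi_{\mu(A_0)}\neq0$.
\item Its finite-dimensional range is invariant under the commuting $X_B$. Hence it contains a joint eigenvector $v\in\cminf_{E^*_u}$ with some linear eigenvalue $\nu$, and necessarily $\nu(A_0)=\mu(A_0)$.
\item Your first inclusion applied to $v$ gives $\nu\in\mc{M}$, and injectivity forces $\nu=\mu$.
\item Since $\ran d_{\mbf{X}-\mu}=0$ in degree $0$, this already shows $\mu\in\Res(\mc{C})$.
\end{enumerate}

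The explicit local states and the ``consistent with the leading term'' remark can be dropped. They are also mislabelled. With the chart conventions of \eqref{coordinate}, the set $\{x_\alpha=0:\ \alpha\in w\Delta^+_{\mc{C}}\cap\Delta^+\}$ is $W^s(wP)$, not $W^u(wP)$. So the states you wrote would have wavefront set in $E^*_s$ rather than $E^*_u$. This sign inconsistency comes from the paper's own conventions, and it is harmless only because that construction is not needed.
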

\begin{proof}
    Suppose $\lambda\in \Res(\mc{C})$. Since $\text{Span}_{\cc}\ \Delta^+=\mf{a}^*_{\cc}$, there exist coefficients $x_{\alpha}\in\cc$ such that $\lambda = \sum_{\alpha\in\Delta^+} z_{\alpha}\alpha$. Remark that these coefficients are not necessarily unique, as $\Delta^+$ is not free. 

    By the above results, there exists $u\in\cminf$ such that $(X+\lambda)u=0$, that is $X_Au=-\lambda(A)u$ for all $A\in\mf{a}_{++}$. In particular, for any $A$, $\lambda(A)$ is a resonance for the flow $\phi^A$, therefore as seen in Proposition \ref{risonanze} this implies that 
    \[
    \lambda(A) = \sum_{w(A)\Delta^+_{\mc{C}}\cap\Delta^-} n_{\alpha}(A)\alpha(A) -\sum_{w(A)\in\Delta^+_{\mc{C}}\cap\Delta^+} (n_{\alpha}(A)+1) \alpha(A),
    \]
    for some $n_{\alpha}(A)\in\mathbb{Z}_{\geq0}$ and some $w(A)\in\mf{w}$. But remark that $\lambda$ is linear in $A$, and so are all the $\alpha$. Given that $\mf{a}^+$ is open, the $n_{\alpha}(A)$ are constant with respect to $A$, and as a result $\lambda=-\sum_{w(A)\Delta^+_{\mc{C}}} n_{\alpha}\alpha$. 
\end{proof}
This concludes the proof of Theorem \ref{grosso}.

\section{Resonances for Grassmanians}

The set of resonances for a general flow is complicated, and explicit computations for low-dimensional cases show that it is not even convex, in the sense that in general $\Res(\mc{C}) \subsetneq \mathbb{Z}^n\cap \text{conv}(\Res(\mc{C}))$. Explicit computations can be carried out in some special cases. In this section we give a more explicit expression of the resonant set for the maximal flag manifold, corresponding to the Borel subgroup, and for projective planes. 
We then compute some multi-dimensional Weyl laws.

\begin{cor}
    Suppose the parabolic subgroup $P$ is minimal, that is $P=B$ the Borel subgroup, or equivalently that $\mc{C}=\mf{a}_{++}$. Then 
    \begin{equation*}
        \Res_{\phi}= \left\{ -\sum_{\alpha\in\Delta^+} n_{\alpha}\alpha: \ n_{\alpha}\in\nn \right\} = -\text{Span}_{\nn}\  \Pi .
    \end{equation*}
\end{cor}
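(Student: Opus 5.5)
We specialise Proposition \ref{explicit} to $\mc{C}=\mf{a}_{++}$ and then carry out a purely combinatorial simplification. Since $\mc{C}$ contains regular elements, $\Delta^0(\mc{C})=\varnothing$ and $\Delta^+_{\mc{C}}=\Delta^+(\mc{C})=\Delta^+$. The stabiliser $\mf{w}_{\mc{C}}$ is trivial, so the union in Proposition \ref{explicit} runs over all $w\in\mf{w}$. For each $w$ the relevant index sets are $w\Delta^+\cap\Delta^-$ and $w\Delta^+\cap\Delta^+$. Their union, together with the negatives of its elements, gives all of $\Delta$.

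The proof is a double inclusion.

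\emph{Inclusion $\subseteq$.} Each element of the set for a fixed $w$ is a sum with non-negative integer coefficients of the elements $n_\alpha\alpha$ for $\alpha\in w\Delta^+\cap\Delta^-$ (negative roots) and $-(n_\alpha+1)\alpha$ for $\alpha\in w\Delta^+\cap\Delta^+$ (negatives of positive roots). Each such term lies in $-\text{Span}_{\nn}\Delta^+$. Every positive root is an $\nn$-combination of the simple roots $\Pi$, so $-\text{Span}_{\nn}\Delta^+=-\text{Span}_{\nn}\Pi$.

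\emph{Inclusion $\supseteq$.} Choose $w=w_0$, the longest element of $\mf{w}$, which satisfies $w_0\Delta^+=\Delta^-$. Then $w_0\Delta^+\cap\Delta^+=\varnothing$, and the set for $w_0$ becomes
\[
\Bigl\{\sum_{\alpha\in\Delta^-}n_\alpha\alpha:\ n_\alpha\in\nn\Bigr\}=-\text{Span}_{\nn}\Delta^+ .
\]
This already contains $-\text{Span}_{\nn}\Pi$, since $\Pi\subset\Delta^+$.

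Combining the two inclusions gives all three equalities in the statement. In the middle expression the coefficient set is read as $n_\alpha\in\nn$ indexed by $\Delta^+$. The only point requiring care is the sign convention. Proposition \ref{explicit} writes the resonances in the form $\sum n_\alpha\alpha-\sum(n_\alpha+1)\alpha$, and the sign bookkeeping must be kept consistent so that the $w_0$ piece contributes exactly the closed negative cone. The main "obstacle" is just recalling that $w_0$ exists and maps $\Delta^+$ onto $\Delta^-$, which is standard for Weyl groups of (restricted) root systems.
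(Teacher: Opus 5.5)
Your proof is correct and follows the same route as the paper. The inclusion $\Res_{\phi}\subseteq -\text{Span}_{\nn}\Pi$ is read off from the form of Proposition \ref{explicit}. For the reverse inclusion, both you and the paper take the longest Weyl element $w_0$, with $w_0\Delta^+=\Delta^-$, whose piece $\{\sum_{\alpha\in\Delta^-}n_\alpha\alpha\}$ is already the whole cone $-\text{Span}_{\nn}\Delta^+=-\text{Span}_{\nn}\Pi$.
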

\begin{proof}
    Clearly $\Res_{\phi}\subset -Span_{\nn} \ \Pi$. On the other hand, consider $w\in\mf{w}$ the maximal element, so that $w\Delta^+=\Delta^-$. Then since $\Delta^+_{\mc{C}}=\Delta^+$, we find
    \begin{eqnarray*}
        \Res_{\phi} &\supset& \Res_{\phi}(wP) = \\
         &=& \left\{ -\sum_{\alpha\in w\Delta^+_{\mc{C}}\cap\Delta^+} (n_{\alpha}+1) \ |\alpha(X)|-\sum_{\alpha\in w\Delta^+_{\mc{C}}\cap\Delta^-} n_{\alpha} \ |\alpha(X)|:  \ n_{\alpha}\in\nn \ \right\}, \\   
        &=& \left\{ -\sum_{\alpha\in\Delta^+_{\mc{C}}} n_{\alpha}\alpha: \ n_{\alpha}\in\nn \right\} = -\text{Span}_{\nn} \ \Pi.
    \end{eqnarray*}
\end{proof}

We now restrict ourselves to the special linear group $G=SL_n(\rr)$. We take the canonical simple system $\Pi= \{ e^*_i-e^*_{i+1}:\ i=0, ..., n-1\}=\{\alpha_{i, i+1}: \ i=0, ..., n-1\}$. Since $\Res_{\phi}\subset -Span_{\nn} \ \Pi$, we can identify any element $R\in \Res_{\phi}$ by its coefficients in the $\Pi$ basis: $R=-\sum_i c_i\alpha_{i, i+1}$. 

\begin{cor}\label{resonanzepargrande}
    Suppose $G=SL_n(\rr)$, and $P$ is the maximal parabolic subgroup. Then the set of resonances is
    \begin{eqnarray*}
        \Res(\mc{C}) = \{ -\sum_i c_i\alpha_i: && \exists k\in\{0, ..., n-1\} \  \textit{such that} \\
        && 0<c_1<c_2< ...<c_k, \\
        && c_{k+1}\geq c_{k+2}\geq ...\geq c_{n-1} \}.
    \end{eqnarray*}
\end{cor}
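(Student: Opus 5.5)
The plan is to specialise Proposition \ref{explicit} (equivalently Theorem \ref{grosso}) to this case and then rewrite each piece of the union in the simple-root coordinates $c_i$. I read the maximal parabolic as the one whose flag manifold is the projective space. This means $\mc{C}$ is spanned by a single element separating one coordinate from the rest, so that $\Delta^0(\mc{C})$ is the root system of $SL_{n-1}$ on the remaining indices. Up to the choice of which coordinate is singled out, $\Delta^+_{\mc{C}}=\{e^*_1-e^*_j:\ j\neq 1\}$. The Weyl group is $S_n$, and $\mf{w}/\mf{w}_{\mc{C}}$ has $n$ cosets, indexed by the position $k$ to which $w$ sends that coordinate. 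Hence
\[
w\Delta^+_{\mc{C}}=\{e^*_k-e^*_j:\ j\neq k\},
\]
whose intersection with $\Delta^+$ is $\{j>k\}$ and with $\Delta^-$ is $\{j<k\}$.

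Substituting into Proposition \ref{explicit}, the contribution of the coset $k$ is
\[
-\sum_{j<k} n_j\,(e^*_j-e^*_k)\;-\;\sum_{j>k}(n_j+1)\,(e^*_k-e^*_j),\qquad n_j\in\nn .
\]
I then expand $e^*_j-e^*_k=\alpha_{j}+\dots+\alpha_{k-1}$ for $j<k$, and symmetrically for $j>k$, and collect the coefficient $c_i$ of $-\alpha_{i,i+1}$. For $i<k$ this gives $c_i=\sum_{j\le i} n_j$, a weakly increasing sequence of nonnegative integers in which every such sequence occurs, since $n_j=c_j-c_{j-1}$. For $i\ge k$ it gives $c_i=\sum_{j>i}(n_j+1)$, a strictly decreasing sequence with last term $c_{n-1}\ge1$, and again every such sequence occurs, since $n_j+1=c_{j-1}-c_j$. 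The map from the free parameters $(n_j)$ to $(c_i)$ is therefore a bijection onto the set of sequences that are weakly increasing up to index $k-1$ and strictly decreasing (and positive) from index $k$ on. Taking the union over $k\in\{1,\dots,n\}$, including the two degenerate cases where one of the two monotone pieces is empty, gives the resonance set.

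The last step is to match this with the normalisation in the statement, which has a strict inequality on the increasing side and a weak one on the decreasing side. This is the mirror image of what the computation above produces. I would obtain it by the symmetry $i\mapsto n-i$ of the Dynkin diagram, that is, by choosing the singled-out coordinate to be the last one instead of the first. Equivalently, one can relabel $k$ and absorb the shift $n_\alpha+1$ into the other side. Under this relabelling the roles of $\Delta^+$ and $\Delta^-$ in $w\Delta^+_{\mc{C}}$ swap, and the "$+1$" moves to the increasing block.

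I expect this bookkeeping to be the main obstacle: fixing exactly which coordinate $\mc{C}$ singles out, and tracking the index ranges ($i=0,\dots,n-1$ versus $1,\dots,n-1$, and the value of $k$ at the boundary), so that the strict and weak inequalities and the strict positivity land exactly as in the statement. The substitution and telescoping themselves are routine. If the conventions cannot be reconciled, I would state the result in the form that the computation actually produces, with weak increase followed by strict decrease, and note that the two forms are exchanged by the diagram automorphism.
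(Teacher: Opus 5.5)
Your proposal is correct and follows the paper's route: specialise Proposition \ref{explicit}, index the cosets of $\mf{w}/\mf{w}_{\mc{C}}$ by where the singled-out coordinate is sent, split $w\Delta^+_{\mc{C}}$ by sign, telescope into simple roots, and take the union over cosets. The only difference is the convention. The paper takes $\mc{C}=\{h_1=\dots=h_{n-1}>h_n\}$ with coset representatives the transpositions $(k,n)$, and this directly gives $c_i=\sum_{j\le i}(n_j+1)$ for $i<k$ and $c_i=\sum_{j>i}n_j$ for $i\ge k$, which is exactly the stated form after $k\mapsto k-1$. So your mirror step is the right fix, and the fallback is not needed; it would describe the resonances of $\{h_1>h_2=\dots=h_n\}$, not of the paper's $\mc{C}$.
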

\begin{proof}
    First of all, see that $\mc{C}=\{diag(h_1, ..., h_n): \ h_1=h_2=...=h_{n-1}>0, \ h_n=-nh_1\}$, and 
    \[
    \Delta^+_{\mc{C}} = \{ \alpha_{k,n}: k=1, ..., n-1\}.
    \]
    Recall that $\mf{w}\simeq S_n$ and $\mf{w}_{\mc{C}}\simeq S_{n-1}$ acting on the first $n-1$ coordinates. Therefore, 
    \[
    \{ (k,n) : \ k=0, ..., n-1\}\subset S_n
    \]
    is a complete set of representatives for $\mf{w}/\mf{w}_{\mc{C}}$. Now we can see that under the action of $(k, n)$:
    \begin{eqnarray*}
    i<k: \ \alpha_{i, n} &\mapsto& \alpha_{i, k} \ \in\Delta^+, \\
        \alpha_{k, n} &\mapsto& \alpha_{n, k} \ \in\Delta^-, \\
    i>k: \    \alpha_{i, n} &\mapsto& \alpha_{i, k} \ \in\Delta^-.
    \end{eqnarray*}
    We then find that
    \[
    \Res_{\phi}((k, n))= \left\{ -\sum_{i<k} (c_j+1)\alpha_{i, k} - c_k\alpha_{i, k} -\sum_{i>k} c_i \alpha_{i, k} : \ c_i, c_k\in\nn\right\},
    \]
    So by developing each root as sum of simple roots:
    \begin{eqnarray*}
        i<k: \ \alpha_{i, k} &=& \alpha_{i, i+1}+...+\alpha_{k-1, k} \\
        k: \ \alpha_{k, n} &=& \alpha_{k, k+1}+ ...+\alpha_{n-1, n} \\
        i>k: \ \alpha_{k, i} &=&  \alpha_{k, k+1}+...+\alpha_{i-1, i}
    \end{eqnarray*}
    So we get 
    \begin{eqnarray*}
        \Res_{\phi}((i, n)) = -\{  \\
        (c_1+1) &\alpha_{1, 2}& \\
        +(c_1+c_2+2) &\alpha_{2, 3}& \\
        +... && \\
        +(c_1+...+c_{k-1}+(k-1))&\alpha_{k-1, k}& \\
        +(c_k+c_{k+1}+...+c_{n-1})&\alpha_{k, k+1}& \\
        +(c_k+c_{k+2}+...+c_{n-1})&\alpha_{i+1, i+2}& \\
        +... && \\
        +(c_k+c_{n-1})&\alpha_{n-2, n-1}& \\
        +c_k&\alpha_{n-1,n}:& \\
         c_k\in\nn \ \forall k =1, ..., n \}.
    \end{eqnarray*}
    We can then rename
    \begin{eqnarray*}
        C_i&=&c_1+...+c_i+i     \qquad  i<k \\
        C_i&=&c_k+c_{i+1}+c_{i+2}+...+c_{n-1}   \qquad k\leq i <n
    \end{eqnarray*}
    And we see that the $C_j$ are allowed any value, with the constraint being
    \begin{eqnarray*}
        0<C_1<C_2...< C_{i-1} \\
        C_{i}\geq C_{i+1}\geq...\geq C_{n-1}\geq0.
    \end{eqnarray*}
    The complete resonant set is 
    \[
    \Res_{\phi} = \bigcup_{1\leq k \leq n-1} \Res_{\phi}((k, n)),
    \]
    so we get the anticipated description.
\end{proof}
One can see that the spectrum respects basic monotonicity conditions. 
\begin{lemma}
    Suppose $\mc{C}\subset \mc{C}'$, then $\Res(\mc{C})\subset \Res(\mc{C'}).$
\end{lemma}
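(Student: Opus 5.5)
The plan is to avoid the explicit formula of Proposition \ref{explicit} and instead compare the two dynamical systems directly, through the natural equivariant fibration between the two flag manifolds. For this to work, the inclusion $\mc{C}\subset\mc{C}'$ has to be read so that $F_{\mc{C}'}$ is the finer flag manifold, i.e. $P_{\mc{C}'}\subset P_{\mc{C}}$. This reading is an assumption. Under it, the projection
\[
\pi: F_{\mc{C}'}=G/P_{\mc{C}'}\longrightarrow G/P_{\mc{C}}=F_{\mc{C}},\qquad xP_{\mc{C}'}\mapsto xP_{\mc{C}},
\]
is a well-defined $G$-equivariant smooth submersion. Consistently, the Borel case above gives the largest set $-\text{Span}_{\nn}\Pi$. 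The first step is to fix this convention in terms of $\Delta^0$: we need $\Delta^0(\mc{C}')\subset\Delta^0(\mc{C})$, hence $\mf{p}_{\mc{C}'}\subset\mf{p}_{\mc{C}}$. I would verify it from the definitions before going further.

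By $G$-equivariance, $\pi\circ\phi^A_t=\phi^A_t\circ\pi$ for every $A\in\mf{a}$, and therefore $d\pi(X'_A)=X_A\circ\pi$. Now take $\lambda\in\Res(\mc{C})$. By the first part of Theorem \ref{grosso} there is $u\in\cminf_{E^*_u}(F_{\mc{C}})$, $u\neq0$, with $(\mbf{X}-\lambda)u=0$. Since $\pi$ is a surjective submersion, the pullback $\pi^*u$ is a well-defined nonzero distribution. Equivariance gives $X'_A\pi^*u=\pi^*(X_Au)=\lambda(A)\pi^*u$ for all $A$, so $(\mbf{X}'-\lambda)\pi^*u=0$.

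The remaining point, which I expect to be the main obstacle, is the wavefront condition: we need $\pi^*u\in\cminf_{E^*_u}(F_{\mc{C}'})$, after which the converse direction of Theorem \ref{grosso} gives $\lambda\in\Res(\mc{C}')$. For pullbacks by submersions,
\[
WF(\pi^*u)\subset\{(y,(d\pi_y)^T\xi):\ (\pi(y),\xi)\in WF(u)\}.
\]
So it suffices to show $(d\pi_y)^T E^*_u(\pi(y))\subset E^*_u(y)$, which is equivalent to $d\pi_y(E_u(y))\subset E_u(\pi(y))$. Using $E_u(y)=\mf{n}^-y$ and the equivariance of $\pi$, we get $d\pi_y(\mf{n}^- y)=\mf{n}^-\pi(y)=E_u(\pi(y))$, and this closes the argument.

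The care needed here is to check that the description $E_u(x)=\mf{n}^-x$, equivalently $W^u=\exp\mf{n}^-\,wP$, holds at every point and not only at critical points. This follows because $\pi$ maps unstable manifolds of $\phi^A$ into unstable manifolds, since $\pi$ conjugates the flows. If needed, I would instead argue via the limit points: $\pi(y^-)=\pi(y)^-$, and $\pi(W^u(y^-))\subset W^u(\pi(y)^-)$, so taking tangents gives the required inclusion.
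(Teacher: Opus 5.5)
Your argument is correct, and it takes a genuinely different route from the paper's.

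\textbf{The paper's route.} The paper argues combinatorially. From $\mc{C}\subset\mc{C}'$ it gets $\Delta^0(\mc{C}')\subset\Delta^0(\mc{C})$, hence $\Delta^+_{\mc{C}}\subset\Delta^+_{\mc{C}'}$, and it reads the inclusion off the explicit formula of Proposition \ref{explicit}.

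\textbf{Your route.} You pull back joint resonant states along the $G$-equivariant fibration $\pi:F_{\mc{C}'}\to F_{\mc{C}}$. This uses only three ingredients:
\begin{itemize}
\item the first part of Theorem \ref{grosso}: its nontrivial direction on $F_{\mc{C}}$, and on $F_{\mc{C}'}$ the trivial fact that a nonzero $u$ with $(\mbf{X}-\lambda)u=0$ is a nonzero class in degree $0$;
\item the standard wavefront bound for pullbacks by submersions;
\item the identity $d\pi_y(E_u(y))=E_u(\pi(y))$.
\end{itemize}

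\textbf{What your route buys.} It is independent of the explicit formula, whose use in the paper's one-line proof hides two steps.
\begin{itemize}
\item It needs the reverse inclusion in Proposition \ref{explicit} for $\mc{C}'$, namely that every element of the formula set really is a joint resonance. The proof given there does not address this.
\item $\Delta^+_{\mc{C}}\subset\Delta^+_{\mc{C}'}$ alone is not enough. Extra roots that $w$ sends into $\Delta^+$ carry the forced shift $(n_\alpha+1)\alpha$. One must replace $w$ by $wv$ with $v\in\mf{w}_{\mc{C}}$ sending $\Delta^+\cap\Delta^0(\mc{C})$ into $\Delta^-$. This is possible because $\mf{w}_{\mc{C}}$ preserves $\Delta^+_{\mc{C}}$ and acts transitively on the positive systems of $\Delta^0(\mc{C})$.
\end{itemize}
Your argument also gives more. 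Since $\pi^*$ is injective on joint eigenspaces, the inclusion holds with multiplicity in degree $0$. It is also insensitive to the sign conventions, which differ between Proposition \ref{risonanze} and Theorem \ref{grosso}. The paper's route, once completed, has the advantage of directly producing the lattice description used later for the Weyl laws.

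\textbf{Two unnecessary hedges.}
\begin{itemize}
\item The convention $P_{\mc{C}'}\subset P_{\mc{C}}$ is not an assumption. It is immediate from $\alpha(\mc{C}')=0\Rightarrow\alpha(\mc{C})=0$.
\item $E_u(x)=\mf{n}\cdot x$ holds at every point, because $W^u(x^-)$ is the orbit through $x$ of a fixed nilpotent subgroup. Your argument only needs the same subalgebra to describe $E_u$ on both manifolds. So the inconsistent $\pm$ labels between $W^{s,u}$ and $E_{s,u}$ in Section \ref{setting} are harmless.
\end{itemize}
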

\begin{proof}
    We have 
    \[
    \mc{C}\subset\mc{C'} \Rightarrow \Delta^0_{\mc{C}}\supset \Delta^0_{\mc{C'}}\Rightarrow \Delta^+_{\mc{C}}\subset \Delta^+_{\mc{C'}}
    \]
    hence the proposition follows from Proposition \ref{explicit}.
\end{proof}
Unfortunately the resonance spectrum does not identify uniquely the flag manifold, even if one of the chambers contains the other. For example, if $G=SL(3, \rr)$, by taking $\mc{C}=\{ h_1=h_2=-2h_3\}\subset \mf{a}$ and $\mc{C}'=\mf{a}^{++}$ we have that $\mc{C}\subset \mc{C'}$ and 
\begin{equation}\label{cringe}
\Res(\mc{C})=-\text{Span}_{\nn}\{ e_1-e_2, e_2-e_3\}=\Res(\mc{C'}),
\end{equation}
but clearly $F_{\mc{C}}=\mathbb{P}_2\neq F_{\mc{C'}}$ as they have different dimensions.

We now compute some Weyl laws. Clearly, in a multidimensional setting the Weyl law depends on which metric to use on $\mf{a}^*$. The author found it more natural to state the results in terms of the classical euclidean distance, with $\Pi$ taken as an orthonormal basis. The laws for the $L^{\infty}$ and the $L^1$ distances are derived identically. From now on, we will denote \[
N(\lambda) = \#\{ \alpha\in\Res(\mc{C}): \ |\alpha|_2<\lambda\}
\]

For convenience we will take $G=SL(n+1, \rr)$, so that $dim\ \mf{a}=n$ and the statements will be simpler. 
\begin{prop}{Weyl law}
    Suppose $\mc{C}=\{h_1=...=h_n=-nh_{n+1}\}$, then the Weyl law for resonance reads as
    \[
    N(\lambda)=|\Res(\mc{C})\cap B_{\mf{a}}(0, \lambda)|=\frac{\omega_n}{2\cdot n!}\lambda^{n/2}+O(\lambda^{(n-1)/2})
    \]
\end{prop}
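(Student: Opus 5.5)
The plan is to turn the count into a lattice-point count in a finite union of polyhedral cones, and read off the leading term from the volume of that union intersected with the ball. By Corollary \ref{resonanzepargrande} applied to $SL(n+1,\rr)$, with $\Pi$ taken as an orthonormal basis of $\mf{a}^*$, the map $R=-\sum_i c_i\alpha_{i,i+1}\mapsto (c_1,\dots,c_n)$ identifies $\Res(\mc{C})$ with the set $S\subset\nn^n$ of sequences for which some $k\in\{0,\dots,n\}$ gives
\[
0<c_1<\dots<c_k, \qquad c_{k+1}\geq\dots\geq c_n\geq 0 .
\]
This identification is isometric, so $N(\lambda)=\#(S\cap B(0,\lambda))$.

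Write $S=\bigcup_k S_k$, and let $\Gamma_k\subset\rr^n_{\geq0}$ be the closed polyhedral cone obtained by replacing the strict inequalities with non-strict ones. Each $S_k$ is $\Gamma_k\cap\mathbb{Z}^n$ with finitely many boundary hyperplanes removed. Each such hyperplane, intersected with the ball, carries only $O(\lambda^{n-1})$ lattice points. Hence
\[
N(\lambda)=\#\Big(\bigcup_k\Gamma_k\cap\mathbb{Z}^n\cap B(0,\lambda)\Big)+O(\lambda^{n-1}).
\]
For a finite union of polyhedral cones, a standard lattice-point count (comparing unit cubes centred at lattice points with the region; the boundary is a finite union of pieces of hyperplanes and of the sphere) gives
\[
\#\big(\Gamma\cap\mathbb{Z}^n\cap B(0,\lambda)\big)=\mathrm{vol}\big(\Gamma\cap B(0,1)\big)\,\lambda^n+O(\lambda^{n-1}),
\]
with $\Gamma=\bigcup_k\Gamma_k$.

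It remains to compute $\mathrm{vol}(\Gamma\cap B(0,1))$. Off the measure-zero set where two coordinates coincide, $\Gamma$ is exactly the set of points of the open orthant whose coordinate sequence is unimodal, that is, strictly increasing and then strictly decreasing. The orthant is partitioned, up to measure zero, into the $n!$ Weyl-type chambers $\{c_{\sigma(1)}>\dots>c_{\sigma(n)}>0\}$, all isometric and hence of equal volume inside the ball. A chamber lies in $\Gamma$ exactly when the corresponding ordering is unimodal. Such an ordering is determined by choosing which of the $n-1$ non-maximal values lie to the left of the maximum, so there are $2^{n-1}$ of them. The orthant occupies a fraction $2^{-n}$ of the ball. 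Therefore
\[
\mathrm{vol}\big(\Gamma\cap B(0,1)\big)=\omega_n\cdot 2^{-n}\cdot\frac{2^{n-1}}{n!}=\frac{\omega_n}{2\cdot n!},
\]
where $\omega_n$ is the volume of the unit ball. This yields $N(\lambda)=\frac{\omega_n}{2\cdot n!}\lambda^{n}+O(\lambda^{n-1})$, which matches the stated constant.

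The main obstacle I expect is bookkeeping rather than analysis. First, one must check that the overlaps between the $S_k$ (for instance $S_0$ and $S_1$ nearly coincide) and the strict-versus-weak inequalities only affect lower-dimensional sets, so the union really has volume equal to the count of unimodal chambers. Second, the exponents: with $\omega_n$ the volume of the unit ball and the Euclidean norm on $\mf{a}^*$, the natural scaling is $\lambda^n$ with error $\lambda^{n-1}$. The stated $\lambda^{n/2}$, $\lambda^{(n-1)/2}$ would correspond to counting with $|\cdot|_2^2<\lambda$, and I would reconcile the normalization accordingly. The $L^1$ and $L^\infty$ versions follow by the same chamber argument, replacing the ball by the corresponding unit ball.
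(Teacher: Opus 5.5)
Your argument is correct and is essentially the paper's: both use Corollary \ref{resonanzepargrande} to reduce to the fact that exactly $2^{n-1}$ of the $n!$ orderings of $n$ distinct coordinates are unimodal, with repeated coordinates and the strict/weak boundaries only lower order. The paper counts this directly on $S_n$-orbits of lattice points in $\nn^n\cap B(0,\lambda)$, while you pass through volumes of polyhedral chambers plus a standard lattice-point estimate. Your exponent correction is also right: with $|\alpha|_2<\lambda$ (the paper's own proof imposes $x_1^2+\dots+x_n^2<\lambda^2$) the count is $\frac{\omega_n}{2\cdot n!}\lambda^{n}+O(\lambda^{n-1})$, and the stated $\lambda^{n/2}$, $O(\lambda^{(n-1)/2})$ only match the normalization $|\alpha|_2^2<\lambda$.
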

\begin{proof}
    By using \ref{resonanzepargrande}, evaluating $N(\lambda)$ is equivalent to counting elements $(x_1, ..., x_n)\in\nn^n$ such that there exists $k$ such that 
    \begin{equation}\label{gln}
        x_1<...<x_k \quad \textit{and} \quad x_{k+1}\geq...\geq x_n
    \end{equation}
     and moreover $x_1^2+...+x_n^2< \lambda^2$.
    Suppose we are given $n$ elements $x_1<x_2<...<x_n$: we see that there are exactly $2^{n-1}$ permutations $\sigma\in S_n$ such that $(x_{\sigma(1)}, ..., x_{\sigma(n)})$ satisfy \eqref{gln}. Indeed, choosing such a permutation $\sigma$ is equivalent to choosing the subset of elements $A = \{ x_{i_1}, ..., x_{i_k}\}$ that make up the "ascending" chain in \eqref{gln}, once this is done the ordering they appear in and the ordering of the remaining elements are forced by the fact that they are all different. The only repetition comes from the fact that each $\sigma$ is counted exactly twice, indeed if $x_{\sigma(k)}>x_{\sigma(k+1)}$, then the choices of $A = \{ x_{\sigma(1)}, ..., x_{\sigma(k-1)}\}$ and $A = \{x_{\sigma(1)}, ..., x_{\sigma(k)}\}$ both lead to $\sigma$, and similarly if $x_{\sigma(k)}<x_{\sigma(k+1)}$. Now, see that $|X|=|\nn^n\cap B(0, \lambda)|=w_n\lambda^{n/2}/2^n+O(\lambda^{(n-1)/2})$. Take any $(x; 1, ..., x_n)\in X$, then $(x_{\sigma(1)}, ..., x_{\sigma(n)})\in X$ for any permutation $\sigma\in S_n$, so $|\{ (x_1, ..., x_n)\in X: \ x_1<x_2<...<x_n\}|=|X|/n! +O(\lambda^{(n-1)/2})$, as elements with repeated entries are at most $O(\lambda^{(n-1)/2})$. By the first part of this proof, we conclude that 
    \begin{eqnarray*}
            N(\lambda) &=& 2^{n-1}|\{ (x_1, ..., x_n)\in X: \ x_1<x_2<...<x_n\}|+O(\lambda^{n-1}) \\
            &=& \frac{w_n}{2\cdot n!} \lambda^{n/2}+O(\lambda^{(n-1)/2})
    \end{eqnarray*}

\end{proof}

Consider now a Grassmanian, so that $\mc{C}=\mc{C}_k=\{ h_1=...=h_k, \ h_{k+1}=...=h_n, \ \sum h_i=0\}$. Denote
\begin{eqnarray*}
    K = \{1, ..., k\} \quad K^c=\{k+1, ..., n\}.
\end{eqnarray*}
Now, for any $\ell\leq \min\{k, n-k\}$, for any $A=\{a_1\leq ...\leq a_{\ell}\}\subset K$, $B=\{b_1\leq  ...\leq b_{\ell}\}\subset K^c$, define $(A, B)=(a_1b_1)(a_2b_2)...(a_nb_n)\in S_n$, that is: $(A, B)$ is the permutation sending the elements of $A$ to the elements of $B$, preserving the ordering. Then we have that the $(A, B)$ generate the quotiented Weyl group, that is
\[
\mf{w}/\mf{w}_{\mc{C}}= \{ (A, B)\mf{w}_{\mc{C}}: \ A\subset K, \ B\subset K^c, \ |A|=|B|=\ell\leq \min\{k, n-k\} \} = W
\]
Indeed, first we remark that by definition $W\subset \mf{w}/\mf{w}_{\mc{C}}$. See that the classes of $W$ are all distinct, as if $A\neq A'$ and $|A|=|A'|$, then there exists some element $i\in K$ such that $(A, B)i \in K^c$ but $(A', B')\in K$, and since $\mf{w}_{\mc{C}}$ stabilises $K$ and $K^c$, we get that $(A, B)\mf{w}_{\mc{C}}\neq (A', B')\mf{w}_{\mc{C}}$. Therefore, $K\subset \mf{w}/\mf{w}_{\mc{C}}$ injectively. 
Then we remark that $\mf{w}/\mf{w}_{\mc{C}}\simeq S_n/(S_k\times S_{n-k})$, so $|\mf{w}/\mf{w}_{\mc{C}}|=\frac{n!}{k!(n-k)!}= {n \choose k}$. Now, notice that the choice of any set $C$ of $k$ random elements in $\{1, ...,n\}$ is equivalent to the choice of $A=K \setminus C$ and $B = K^c\cap C$, hence $W={n\choose k} = \mf{w}/\mf{w}_{\mc{C}}$. \\
Now we can compute the action of $(A, B)$ on $\Delta^+_{\mc{C}}$. 
\begin{eqnarray*}
    (A, B): \ &i\notin A, j\notin B& \quad \alpha_{i, j}\mapsto \alpha_{i, j} \\
    &i=A(h) \in A, j\notin B& \quad \alpha_{i, j}\mapsto \alpha_{B(h), j} \\
    &i\notin A, j=B(h)\in B& \quad \alpha_{i, j}\mapsto \alpha_{i, A(h)} \\
    &i=A(h) \in A, j=B(h)\in B& \quad \alpha_{i, j}\mapsto \alpha_{B(i), B(j)} \\
\end{eqnarray*}
and one can see that the whole action can be summarised in 
\begin{eqnarray*}
    (A, B)\Delta^+_{\mc{C}}
    = \{ \alpha_{i, j}: \ && \  i\in (K\setminus A), \ j\in (K^c\setminus B) \\
    && or \quad i\in B, \ j\in (K^c\setminus B) \\
    && or \quad  i\in (K\setminus A), \ j\in A \\
    && or \quad i\in A, j\in B \qquad \qquad \}
\end{eqnarray*}
This allows for explicit computation for some specific cases. For instance, we have the following
\begin{prop}\label{parabolicosimm}
    Let $G=SL(n+1, \rr)$. If $n+1=2k$, set $\mc{C}=\{ h_1=...=h_k, \ h_{k+1}=...=h_{n+1}, \ h_1+h_{k+1}=0\}$, and if $n+1=2k+1$ let $\mc{C}=\{ h_1=...=h_{k+1}, \ h_{k+2}=...=h_{n+1}, \ (k+1)h_1+kh_{k+1}=0\}$. Then, we have that 
    \[
    N_{\mc{C}}(\lambda) = \frac{\omega_n}{2^n}\lambda^{n/2}+O(\lambda^{(n-1)/2})
    \]
\end{prop}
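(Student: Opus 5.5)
The plan is to show that, for the symmetric Grassmannian, the resonant set is essentially all of $-\text{Span}_{\nn}\Pi$ up to a lower-dimensional error. The count is then that of $\nn^n\cap B(0,\lambda)$, namely $\frac{\omega_n}{2^n}\lambda^{n/2}+O(\lambda^{(n-1)/2})$, with the same normalisations as in the previous Weyl law.

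The upper bound is free. By Proposition \ref{explicit}, $\Res(\mc{C})\subset -\text{Span}_{\nn}\Pi$, so in the coordinates $c_i$ with respect to $\Pi$ we have $N_{\mc{C}}(\lambda)\leq |\nn^n\cap B(0,\lambda)|$.

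For the lower bound I use the description of $\mf{w}/\mf{w}_{\mc{C}}$ through the permutations $(A,B)$ and the formula for $(A,B)\Delta^+_{\mc{C}}$. By Proposition \ref{explicit}, each $(A,B)$ contributes the cone
\[
R_{(A,B)}=\Bigl\{-\sum_{\beta\in (A,B)\Delta^+_{\mc{C}}}m_\beta\,\beta \;:\; m_\beta\in\nn,\ m_\beta\geq 1 \text{ if } \beta\in\Delta^+\Bigr\},
\]
where roots in $(A,B)\Delta^+_{\mc{C}}\cap\Delta^-$ are rewritten as minus a positive root. The key observation is that in the symmetric case ($|K|=k$ and $|K^c|=n+1-k$ differ by at most one) the union of these cones covers every lattice point of the closed positive cone of $-\text{Span}_{\nn}\Pi$ whose coordinates lie in the interior of some sub-cone. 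To exploit this, I would fix the distinguished elements $(A,B)$ with $A=K$ (or $|A|=\min(k,n+1-k)$). For these, the image $(A,B)\Delta^+_{\mc{C}}$ contains a family of $n$ roots that are linearly independent in $\mf{a}^*$ (and in fact unimodular): roots $\alpha_{i,j}$ with $i\in K\setminus A$ or $B$, chained through consecutive indices. Unimodularity means their $\nn$-span is a full-dimensional sublattice cone. I then check that these cones, after the harmless shift by the forced $+1$ on positive roots, tile $-\text{Span}_{\nn}\Pi$ up to their common boundaries.

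The shifts and boundaries affect only the points lying within bounded distance of finitely many hyperplanes. Each such slab contains $O(\lambda^{(n-1)/2})$ lattice points of the ball, so this is exactly the error term. Putting the two bounds together gives $N_{\mc{C}}(\lambda)=|\nn^n\cap B(0,\lambda)|+O(\lambda^{(n-1)/2})$, which is the claim.

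The main obstacle is the covering step: proving combinatorially that the cones $R_{(A,B)}$ exhaust $-\text{Span}_{\nn}\Pi$ up to boundary, and that this genuinely needs the balanced choice $k\approx (n+1)/2$ (for $k=1$ one only gets the $2^{n-1}/n!$ fraction found before). I would first try to mimic the proof of Corollary \ref{resonanzepargrande}. That means writing each root $\alpha_{i,j}$ as a sum of consecutive simple roots, then expressing the coefficient $c_\ell$ of $\alpha_{\ell,\ell+1}$ as the number of chosen roots $\alpha_{i,j}$ with $i\leq \ell<j$. This turns $R_{(A,B)}$ into the set of vectors $(c_\ell)$ realisable as crossing numbers of a bipartite multigraph between prescribed index sets. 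I would then show that when $|K|$ and $|K^c|$ are balanced, every sufficiently generic integer profile $(c_\ell)$ is realisable for some $(A,B)$, by a greedy matching argument.
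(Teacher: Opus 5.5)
Your upper bound is correct and is the paper's: $\Res(\mc{C})\subset-\text{Span}_{\nn}\Pi$ by Proposition \ref{explicit}. The gap is in the lower bound. You assert the tiling but never check it, and for the family you actually fix it is false.

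When $n+1=2k$, the condition $A=K$ (equivalently $|A|=\min(k,n+1-k)$) forces $B=K^c$. So your family is just the longest coset representative, whose image is $-\Delta^+_{\mc{C}}$. Every $\alpha_{i,j}$ with $i\le k<j$contains $\alpha_{k,k+1}$. Hence the cone $R_{(K,K^c)}$ is exactly the unimodal set $\{c_1\le\dots\le c_k\ge c_{k+1}\ge\dots\ge c_n\}$, a proportion $\binom{n-1}{k-1}/n!$ of $\nn^n$.

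When $n+1=2k+1$ with $k\ge 2$, every member of your family has $B=K^c$. So $n,n+1\in w(K)$, while $w(K^c)=A\subset\{1,\dots,k+1\}$. Any root of $w\Delta^+_{\mc{C}}$ whose support contains $\alpha_{n,n+1}$ then also contains $\alpha_{n-1,n}$. So all your cones, shifts included, lie in $\{c_n\le c_{n-1}\}$ and miss roughly half the lattice points. Your fallback, a greedy matching over all $(A,B)$, is only announced. As written, the lower bound is not proved.

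The missing idea is that no covering by several cones is needed, because one coset already carries a translate of the whole cone. Choose $w$ with $w(K)$ equal to the odd indices of $\{1,\dots,n+1\}$; this is what the paper's $(A,B)$, with $A$ the even elements of $K$, achieves. Then $\ell$ and $\ell+1$ always lie in different classes $w(K)$, $w(K^c)$. Hence each simple root $\alpha_{\ell,\ell+1}$ belongs, up to sign, to $w\Delta^+_{\mc{C}}$.

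By Proposition \ref{explicit}, the resonances attached to $w$ are then $-\rho_w-\text{Span}_{\nn}\Pi$ with $\rho_w=\sum_{\alpha\in w\Delta^+_{\mc{C}}\cap\Delta^+}\alpha$. Simple roots that appear with a negative sign even get free coefficients. This translate misses only lattice points in slabs of bounded width along the coordinate hyperplanes, which is the error term.

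This is also exactly where the balance hypothesis enters. An alternating two-colouring of $\{1,\dots,n+1\}$ has classes of sizes $\lceil (n+1)/2\rceil$ and $\lfloor (n+1)/2\rfloor$. These match $(|K|,|K^c|)$ only for the symmetric Grassmannian.
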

\begin{proof}
    Suppose $n=2k$.
    Consider $A=\{ 2, 4, ..., 2\lfloor\frac{k}{2}\rfloor\}$ and $B=\{2\lfloor (k+1)/2\rfloor +1, 2\lfloor (k+1)/2\rfloor +3, ..., 4\lfloor(k+1)/2\rfloor -1\}$. Then see that, for $i=1, ..., n-1$ we have:
    \begin{eqnarray*}
        i<k &:& \ \textit{either} \quad i\in A, \ i+1\in K\setminus A \quad \textit{or} \quad i\in K\setminus A,\  i+1\in A \\
        i=k&:& \ \textit{if k is odd} \quad k\in K\setminus A, \ k+1\in K^c\setminus B \\
        &&\  \textit{if k is even} \quad k\in A, \ k+1\in B \\
        i>k &:&\ \textit{either} \quad i\in B, \ i+1\in K\setminus B \quad \textit{or} \quad i\in K\setminus B, \ i+1\in B
    \end{eqnarray*}
    Hence, all the simple roots are in $(A, B)\Delta^+_{\mc{C}}$. Since we are only interested in the first term of the asymptotic expansion of $\lambda$, we get that 
    \[
    N(\lambda)=|\text{span}_{\nn}\Pi \ \cap B(0, \lambda)|+O(\lambda^{(n-1)/2}) = \frac{\omega_n}{2^n}\lambda^{n/2}+O(\lambda^{(n-1)/2})
    \]
    The case for $n+1=2k+1$ is analogous. 
\end{proof}

There are several question one might be tempted to ask at this point, for instance, do the spectra determine the flag manifolds uniquely? The answer is clearly no, as remarked in \eqref{cringe}. Nevertheless, one might ask if this is the case among a restricted subset of flag manifolds, such as the set of minimal flags i.e. Grassmanians. Partial resulults can be proven, such as the fact that the "central" Grassmanian with $k= \lfloor n/2\rfloor$ has maximal Weyl constant among Grassmanians of dimension $n$. This can be proven by showing that if $k\neq \lfloor n/2\rfloor$, then any element $a=(a_1, ..., a_n)\in \Res(\mc{C}_k)$ must have three indices $(a_{j-1}, a_j, a_{j+1})$ such that $a_j\leq a_{j-1}+a_{j+1}$, and then counting that the set sequences $(a_1, ..., a_n)$ such that the previous inequality is false for any choice of $j$ has volume $\sim c\lambda^{n/2}$ and is therefore not negligible. In a similar manner, as long as $1<k<n-1$ then $\Res(\mc{C}_k)$ allows non-monotonic sequences that increase the Weyl constant strictly from the one of $F_{n-1}$. 
The author suspects that this might extend to a strict inequality of Weyl constants for Grassmanians of fixed dimension. 


\appendix

\section{Proof of existence of the invariant neighbourhood}\label{intorno}

We give here the complete proof of proposition \ref{intornofisso}. The proof is in fact very similar to the one in \cite{DR}, with the additional difficulty that we are dealing with a family of flows instead of with a single one. It is unclear if such a proof could exist without the (almost) global coordinate charts available in our situation. 

\begin{prop}
     Let $V_0^s\supset\Sigma_s$ be an open set. Then there exists $V^s\subset V_0^s$ containing $\Sigma_s$ that is positively flow-invariant for all $A\in\mf{a}_{++}$. 
\end{prop}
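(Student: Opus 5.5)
The plan is to build $V^s$ by hand in the linearizing charts \eqref{coordinate} near each critical point, and then use the fact that all flows in $\mf{a}_{++}$ share the same stable and unstable manifolds to transport these local pieces along the flow. Since $\Sigma_s$ is a sink for the induced flow on $S^*F_{\mc{C}}$ for every $A\in\mf{a}_{++}$, the candidate will be
\[
V^s=\bigcup_{A\in\mf{a}_{++},\,t\ge 0}\tilde\phi^A_t(U),
\]
for a suitable small neighbourhood $U$ of $\Sigma_s$. Because the flows commute and $\mf{a}_{++}$ is a convex cone, $\tilde\phi^B_s\circ\tilde\phi^A_t=\tilde\phi^{sB+tA}_1$, so any set of this form is automatically positively invariant for every $A\in\mf{a}_{++}$. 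The whole task is then to choose $U$ so that $V^s\subset V^s_0$.

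\emph{Step 1 (local model).} In the chart $\gamma_w$ around a critical point $wP$, the lifted flow acts by $x_\alpha\mapsto e^{t\alpha(A)}x_\alpha$ and $\xi_\alpha\mapsto e^{-t\alpha(A)}\xi_\alpha$. Split $w\Delta^+(\mc{C})$ into roots in $\Delta^+$ and roots in $\Delta^-$. For $A\in\mf{a}_{++}$ the signs of $\alpha(A)$ are then fixed, independently of $A$. Here $E^*_u=(E_u)^\perp$, and $\Sigma_s\subset E^*_u$ is described near $wP$ by the covector components along the expanding directions together with position constraints. I define a box neighbourhood
\[
U_w=\{|x_\alpha|<\ve \ (\alpha(A)<0),\quad |x_\alpha|<\delta \ (\alpha(A)>0),\quad \text{suitable cone conditions on }\xi\},
\]
with cone conditions of the form $\max_{\alpha\text{ contracting}}|\xi_\alpha|\le \eta\,\max_{\alpha\text{ expanding}}|\xi_\alpha|$. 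Since every coordinate evolves monotonically in modulus with a fixed sign of rate for all $A\in\mf{a}_{++}$, the ratio conditions are preserved forward in time. The position constraints on the contracting coordinates are preserved as well. The expanding position coordinates leave the chart, and handling this is the job of Step 2.

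\emph{Step 2 (globalisation).} Points of $V^s$ leave the local box only along unstable directions. I proceed by induction over the critical points, ordered by the Bruhat/Morse order of $f_{A_0}$, which is the same for all of $\mf{a}_{++}$ because the $W^{s,u}$ coincide. The induction follows the Dang–Rivière construction: near each critical point, the set $\Sigma_s$ over $W^u(wP)\cap\{\text{exit sphere}\}$ is compact. Its forward images under all $A\in\mf{a}_{++}$ converge to lower critical points with directions controlled by the chart cone conditions. These conditions hold uniformly in $A$ on the compact slice $\{h(A)=1\}$ of the open cone, and they can be extended to the closed cone by noting that the exponents only appear through their signs. Shrinking $\ve,\eta$ at higher critical points, depending on the choices made at lower ones, yields $U$ with $V^s\subset V_0^s$.

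\emph{Main obstacle.} The hardest part is the uniformity in $A$ during the transit between charts. The transition maps between charts $\gamma_w$ and $\gamma_{w'}$ are nonlinear, and the transit time depends on $A$, so cone conditions could degrade in a way that depends on $A$. My first attempt is to reduce transit to a fixed compact family: on the exit sphere, the time-$T$ maps $\tilde\phi^A_T$ with $h(A)\in[1,2]$ form a compact family of diffeomorphisms. General $A$ can be written as a composition of such maps with pure chart-linear evolutions, by commutativity. Compactness then gives uniform constants, which makes the inductive shrinking possible.
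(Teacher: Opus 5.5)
Your reduction is correct as far as it goes. Forward saturations $\bigcup_{A,t}\tilde\phi^A_t(U)$ under the whole cone are open and positively invariant for every $A\in\mf{a}_{++}$. Step 1 is also correct, provided the cone is $\{|\xi_u|<\eta|\xi_s|\}$ with $\xi_u$ the components dual to the \emph{expanding} positions $x_u$. These are the components that decay, and $\{\xi_u=0\}$ is $E^*_u$ over $W^u(wP)$. So your labels ``contracting/expanding'' must refer to the behaviour of $\xi$, not to the sign of $\alpha(A)$ you used for the positions.

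However, the whole content of the proposition is the inclusion $V^s\subset V^s_0$, and Step 2 does not establish it. The one concrete mechanism you offer for uniformity in $A$ fails. The slice $\{h(A)=1\}$ (or $\{1\le h(A)\le 2\}$) of the chamber is not compact, because $h$ vanishes on the walls. Already for $\mathfrak{sl}_3(\rr)$ one has $h(A)=\min(\alpha_1(A),\alpha_2(A))$, and you can take $\alpha_1(A)=1$, $\alpha_2(A)\to\infty$. So the maps $\tilde\phi^A_T$ do not form a compact family, and the ratios of the exponents are unbounded. Normalising by $|A|=1$ instead restores compactness but lets $h(A)\to 0$, where transit times blow up. Writing $\tilde\phi^A_t$ as a composition with chart-linear evolutions does not help either: the linear form is only valid inside the chart, and the problem is precisely what happens after the orbit leaves it. As written, nothing shows that the forward images of a small $U$ under all of $\mf{a}_{++}$ stay in $V^s_0$.

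The missing idea is that no transit estimate is needed, provided the chart boxes are taken \emph{large} in the unstable directions rather than small. The paper proceeds as follows.
\begin{enumerate}
\item Order the critical points $w_1,\dots,w_p$ so that $W^u(w_i)\cap W^s(w_j)\neq\varnothing$ implies $i<j$, and build the set from the sink $w_p$ upwards.
\item Suppose $\mc{U}_{k+1}$ is positively invariant for every $A\in\mf{a}_{++}$ and contains the part of $\Sigma_s$ over $\bigcup_{j>k}W^u(w_j)$.
\item In the chart of $w_k$, choose $R$ large and then $\eta,\ve$ small so that the whole exit face $\{|x_s|\le\eta,\ |x_u|=R,\ |\xi_u|<\ve|\xi_s|\}$ lies in $\mc{U}_{k+1}$. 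This works because far out on $W^u(w_k)$ one approaches $\bigcup_{j>k}W^u(w_j)$, around which $\mc{U}_{k+1}$ was built. With your small $\delta$-boxes the exit face sits near $w_k$ and need not lie in $\mc{U}_{k+1}$.
\item Shrink to $\eta'\le\eta$, $\ve'\le\ve$ so that the box $\{|x_s|\le\eta',\ |x_u|\le R,\ |\xi_u|<\ve'|\xi_s|\}$ lies in $V^s_0$.
\end{enumerate}
Inside the box every $|x_\alpha|$ and $|\xi_\alpha|$ is monotone, with a direction of monotonicity that does not depend on $A\in\mf{a}_{++}$. Hence an orbit either stays in the box forever or leaves it through that exit face. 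At that moment it lies in $\mc{U}_{k+1}$, and it stays there by the induction hypothesis. So the union of the box with $\mc{U}_{k+1}$ is positively invariant for all $A$ at once. The only link between charts is a static, $A$-independent inclusion of the exit face, and the uniformity-in-$A$ problem you identified never arises.
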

\begin{proof}
Let $\{w_1, ..., w_p\}=\mf{w}P$ be the critical points of the flow, ordered so that $W^u(w_i)\cap W^s(w_j)\neq \varnothing \ \Rightarrow i<j$, see \cite{DR} as to why this is possible. We denote $\pi_F: T^*F_{\mc{C}}\rightarrow F_{\mc{C}}$ the canonical projection.

Let us construct the set $\tilde{V}^s$ progressively starting from $w_p$, the only element whose stable manifold has full dimension. In the $\gamma_{w_p}$ parametrization, since $V_0^s$ is open, there exists a small $\ve_p$ such that $U_p=(-\ve_p,\ve_p)^{n} \subset \gamma_{w_p}\pi_F(V_0^s)$, and clearly $U_p$ is positively flow-invariant for all $X\in\mf{a}_{++}$.

Suppose then that we have constructed $U_{k+1}\supset \bigcup_{j\geq k+1} W^u(w_j)$, and let us take the coordinates of $w_k$, with the separation of coordinate indices $\Delta^{\pm}=w_k\Delta^+_{\mc{C}}\cap\Delta^{\pm}$ and $x_{s, u}=(x_{\alpha})_{\Delta^{\pm}}$.
There exists $R>0$ such that if $|x_u|\geq R$ then $(0, x_u)\in U_{k+1}$, and therefore there exists $\eta>0$ such that if $|x_s|\leq \eta,\  |x_u|=R$ then $(x_s, x_u)\in U_{k+1}$.
As $V_0^s$ is open, there exists $\eta'<\eta$ such that $U^{(k)}=[-\eta', \eta']^{\Delta^-}\times [-R, R]^{\Delta^+}\subset \pi_F\tilde{V}_0^s$.
But then let us remark that $U_k=U^{(k)}\cup U_{k+1}$ is positively flow-invariant for all $X\in \mf{a}_{++}$ and furthermore $U_k\subset \pi_F(V^s_0)$.
In fact, let $(x_s, x_u) \in U^{(k)}$ and $X\in \mf{a}_{++}$: there exists a certain $t_0$ such that $|\phi^X_{t_0}(x_u)|=R$.
Since $|\phi^X_t(x_s)|$ is decreasing, for all $t\in[0, t_0]$ we have $(x_s, x_u)\in[-\eta', \eta']^{\Delta^-}\times [-R, R]^{\Delta^+}=U^{(k)}\subset U_k$, then at $t_0$ we have $\phi^X_{t_0}(x_s, x_u)\in U_{k+1}$ and by flow-invariance of $U_{k+1}$ we find that $\forall t>0$ we have $\phi^X_{t_0+t}(x_s, x_u)\in U_{k+1}$.
In particular, $\forall t\geq 0: \ \phi^A_t(x_s, x_u)\in U_k$, which proves the flow-invariance. 

The construction of the lift to the cotangent bundle is simple from here, we prove the existence of positively flow-invariant conic subsets in the coordinate charts and glue them. 

Indeed, we proceed inductively starting with $\mc{U}_p=T^*U_p$, which is clearly positively flow invariant. At each step we fix a critical point $w_k$ with the associated coordinates, we are given $\mc{U}_{k+1}$ positively flow invariant in $S^*F_{\mc{C}}$, and the set $U_k\subset F_{\mc{C}}$ constructed above. Consider $S^*F_{\mc{C}}$ as the unit co-sphere bundle in these coordinates. Just as above, there exist $R, \eta$ such that $|x_s|<\eta,\ |x_u|=R$ implies $(x_s, x_u)\in U_{k+1}$. In the same way, there exists $\varepsilon>0$ such that $|x_s|<\eta, \ |x_u|=R, \ |\xi_u|/|\xi_s|<\varepsilon$ implies $(x, \xi)\in \mc{U}_{k+1}$. Then if we take 
\[
\mc{U}_k = \mc{U}_{k+1}\cup U_k\times \{ |\xi_u|<\varepsilon' |\xi_s|\}\subset S^*F_{\mc{C}},
\]
we find that $\mc{U}_k$ is positively flow invariant, and clearly $\mc{U}_k\supset \Sigma_s\cap \overline{W^u(w_k)}$ since $W^u(w_k)\cap \Sigma_s=\{ x_s=0, \ \xi_u=0\}$. By taking $\varepsilon'>0$ small enough, we ensure that $\mc{U}_k\subset \tilde{V}^s$. Going up to $p=1$ we complete the proof.
\end{proof}

\section{Resolution of the algebraic condition}\label{algebra}

We give here the complete classification of \emph{regular pairs} $(\mc{C}, X)$ for which our analysis of the flow is valid. For convenience, take some $H\in\mc{C}$ such that $\Delta^0_{\mc{C}}=\Delta^0_H=H^{\perp}\cap \Delta$. 
Our goal is to find which pairs $(X, H)$ satisfy the following property: 
\begin{equation} \label{cabbo}
    \forall\alpha\in\Delta: \ (\alpha(X)=0)\ \Rightarrow \ (\forall w\in\mf{w}: \ \alpha(wH)=0) .
\end{equation} 
It is clear that the condition is invariant under the action of $\mf{w}$:
$\forall w_1, w_2\in\mf{w}$we have that $(X, H)$ satisfies \eqref{cabbo} if and only if $(w_1X, w_2H)$ does.
Therefore if we fix a positive Weyl chamber $\mf{a}_{++}$ we can assume $X, H\in \overline{\mf{a}_{++}}$.
Recall that an element $Y\in\mf{a}$  is called regular if $\alpha(Y)\neq0 \ \forall\alpha\in\Delta$, and the set of regular elements is $\mf{w}\mf{a}_{++}$. 
If $X$ is regular, the antecedent in \eqref{cabbo} is never satisfied, so $(X, H)$ is a solution. 
If $H$ is regular, then the antecedent in \eqref{cabbo} can never be satisfied, so $X$ must be regular.
We want to go all the way and find all non-regular pairs.
Given the formulation of the proposition, it is natural to look for an $X$ such that solutions $H$ exist: we will denote $\mathcal{H}(X)$ the set of such $H$.
By inspecting \eqref{cabbo} we see that if $\mf{w}$ is transitive on the roots, there exists no interesting non-regular $X$.
For the sake of the argument, if $\mf{w}$ were to fix all the roots,  then for all $X\neq0$, $\mc{H}(X)\neq\{0\}$ as $X$ itself is a solution. What we are trying to quantify is therefore this $\ll \textit{lack of transitivity} \gg$ . 
Let us now recall the existence of a scalar product $\croc{\ , }$ on $\mf{a}, \mf{a}'$ such that $\mf{w}$  is generated by the orthogonal reflections $s_{\alpha}$ with respect to the kernels of $\croc{\alpha, \cdot}_{\mf{a}'}$.
Let us define
\begin{equation*} 
    R(X)=\Delta^0_X\cup \{\alpha\in\Delta: \ \exists\beta\in\Delta(X) \ \textrm{such that} \ \croc{\alpha, \beta}\neq0 \}=\Delta\setminus (\Delta^0_X)^{\perp}.
\end{equation*}
Our main proposition will be
\begin{prop} \label{suino}
    Let $X\in\mf{a}$.
    Then $\mathcal{H}(X)=\bigcap_{\alpha\in R(X)} \ker(\alpha)$. 
\end{prop}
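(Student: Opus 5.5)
Throughout, identify $\mf{a}$ with $\mf{a}^*$ through the $\mf{w}$-invariant scalar product $\croc{\ ,\ }$. The plan is first to rewrite the defining condition of $\mc{H}(X)$ as a kernel condition. By \eqref{cabbo}, $H\in\mc{H}(X)$ if and only if $\alpha(wH)=0$ for every $\alpha\in\Delta^0_X$ and every $w\in\mf{w}$. Since $\alpha(wH)=(w^{-1}\alpha)(H)$, this says
\[
\mc{H}(X)=\bigcap_{\beta\in\mf{w}\Delta^0_X}\ker\beta=\left(\text{span}\,\mf{w}\Delta^0_X\right)^{\perp}.
\]
The proposition therefore reduces to the equality of subspaces $\text{span}\,\mf{w}\Delta^0_X=\text{span}\,R(X)$. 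I will prove this by showing that both sides equal an explicit sum of irreducible components.

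Decompose $\Delta=\bigsqcup_i\Delta_i$ into irreducible root systems. This gives an orthogonal decomposition $\mf{a}^*=\bigoplus_i\mf{a}_i^*$ with $\mf{a}_i^*=\text{span}\,\Delta_i$, and $\mf{w}=\prod_i\mf{w}_i$, where $\mf{w}_i$ acts irreducibly on $\mf{a}_i^*$ and trivially on the other factors. This is the decomposition of $\mfg$ into simple ideals already used in Section~\ref{setting}. Let $I=\{i:\ \Delta^0_X\cap\Delta_i\neq\varnothing\}$ and set $V_I=\bigoplus_{i\in I}\mf{a}_i^*$. The goal is to show that both spans equal $V_I$.

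\emph{Step 1: $\text{span}\,\mf{w}\Delta^0_X=V_I$.} Take $\beta\in\Delta^0_X\cap\Delta_i$. The span of its orbit $\mf{w}\beta=\mf{w}_i\beta$ is a nonzero $\mf{w}_i$-invariant subspace of $\mf{a}_i^*$. By irreducibility it is all of $\mf{a}_i^*$. Summing over $\beta\in\Delta^0_X$ gives the claim.

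\emph{Step 2: $\text{span}\,R(X)=V_I$.} Roots in different components are orthogonal, so every element of $R(X)$ lies in some $\Delta_i$ with $i\in I$; hence $\text{span}\,R(X)\subset V_I$. For the reverse inclusion, fix $\beta\in\Delta^0_X\cap\Delta_i$. Let $S$ be the span of $N_\beta=\{\gamma\in\Delta_i:\ \croc{\gamma,\beta}\neq0\}\subset R(X)$; note $\beta\in N_\beta$, so $S\neq0$. I then check that $S$ is invariant under every generator $s_\gamma$ of $\mf{w}_i$, splitting into two cases.
\begin{itemize}
\item[(a)] If $\gamma\perp\beta$, then $s_\gamma$ is an isometry fixing $\beta$. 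It therefore preserves $N_\beta$, and hence $S$.
\item[(b)] If $\gamma\not\perp\beta$, then $\gamma\in N_\beta\subset S$. For $v\in S$ we have $s_\gamma v=v-\frac{2\croc{v,\gamma}}{\croc{\gamma,\gamma}}\gamma\in S$.
\end{itemize}
So $S$ is a nonzero $\mf{w}_i$-invariant subspace, and irreducibility forces $S=\mf{a}_i^*$. Hence $\text{span}\,R(X)\supset V_I$.

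Combining the two steps, both intersections of kernels equal $V_I^{\perp}$, which proves $\mc{H}(X)=\bigcap_{\alpha\in R(X)}\ker\alpha$. The only step with genuine content is Step~2, namely seeing that the roots non-orthogonal to a single root span its whole irreducible component. The reflection-invariance argument above is the route I would take; the main thing to double-check there is case (a), that $s_\gamma$ really preserves non-orthogonality to $\beta$, which holds because $s_\gamma\beta=\beta$ and $s_\gamma$ is orthogonal.
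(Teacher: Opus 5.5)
Your argument is correct, but it follows a different route from the paper's. You rewrite $\mc{H}(X)$ as the annihilator of $\text{span}\,\mf{w}\Delta^0_X$. You then identify both this span and $\text{span}\,R(X)$ with the sum $V_I$ of the irreducible components meeting $\Delta^0_X$, using in both steps that $\mf{w}_i$ acts irreducibly on $\text{span}\,\Delta_i$.

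The paper does not decompose $\Delta$ at this point and proves the two inclusions directly. For $\mc{H}(X)\subset\bigcap_{R(X)}\ker\alpha$ it uses a single reflection: if $\beta\in R(X)$ is non-orthogonal to $\gamma\in\Delta^0_X$, then $H$ kills both $\gamma$ and $s_\beta\gamma=\gamma-c\beta$ with $c\neq0$, hence kills $\beta$ (this is how Lemma \ref{suinoprimo} is used). For the reverse inclusion it shows that $\bigcap_{R(X)}\ker\gamma$ is $\mf{w}$-invariant, checking each generator $s_\beta$ in two exhaustive cases:
\begin{itemize}
\item $\beta\perp\Delta^0_X$, so $s_\beta$ permutes $R(X)$;
\item $\beta\in R(X)$, so one may intersect with $\ker\beta$, on which $s_\beta\gamma$ and $\gamma$ coincide.
\end{itemize}
This is the same dichotomy as your cases (a) and (b), applied to all of $R(X)$ rather than to $N_\beta$. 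Together with $\Delta^0_X\subset R(X)$ it gives $\bigcap_{R(X)}\ker\gamma\subset\mc{H}(X)$ at once.

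The paper's proof is therefore completely elementary and needs no irreducibility theorem for Weyl groups. Yours relies on that standard fact, but in return it proves more: $\mc{H}(X)=V_I^{\perp}=\bigoplus_{j\notin I}\mf{a}^{(j)}$, which is exactly Proposition \ref{soluzione}. In particular it gives Proposition \ref{radirred} uniformly for every irreducible type, whereas the paper only checks type $A_n$ by hand.
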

\begin{proof}
    Let us start with two preliminary lemmas.
    \begin{lemma} \label{suinoprimo}
        Suppose $\croc{\alpha, \beta}\neq0$. Then $ker(\alpha)\cap \ker(\beta)=ker(s_{\beta}\alpha)\cap \ker(\beta)$.
    \end{lemma}
    \begin{proof}
        Take an orthonormal basis of $\mf{a}'$ such that $\beta=\beta_1e_1$ and $\alpha=\alpha_1e_1+\alpha'$, with $\alpha'\perp\beta$, and by non-orthogonality $\alpha_1\neq0$. The proof follows from a direct computation. 
    \end{proof}
    \begin{lemma}
        For all $\beta\in\Delta$, we have  
        \[ \bigcap_{\gamma_\in R(X)} \ker(s_{\beta}\gamma) = \bigcap_{\gamma_\in R(X)} \ker(\gamma) .\] 
        In particular, the Weyl group preserves $\bigcap_{\gamma_\in R(X)} \ker(\gamma)$.
    \end{lemma}
    \begin{proof}
        Suppose first that $\beta\in\Delta(X)^{\perp}$.
        Then $s_{\beta}\gamma=\gamma$ for all $\gamma\in\Delta(X)$, and as $s_{\beta}$ is an isometry, it preserves $\Delta(X)^{\perp}$, in particular $s_{\beta}(R(X))=s_{\beta}(\Delta\setminus\Delta(X)^{\perp})=\Delta\setminus \Delta(X)^{\perp}=R(X)$.
        Suppose now that $\beta\in R(X)$. Then
        \begin{eqnarray*}
            \bigcap_{\gamma\in R(X)} \ker(s_{\beta}\gamma) &=& \bigcap _{\gamma\in R(X)} \ker(s_{\beta}\gamma)\cap \ker(\beta) \\
            &=& 
        \bigcap_{\gamma\in R(X), \croc{\beta, \gamma}=0} \ker(s_{\beta}\gamma) \quad \cap \bigcap_{\gamma\in R(X), \croc{\beta, \gamma}\neq0} \ker(s_{\beta}\gamma)\cap \ker(\beta) \\ &=& 
        \bigcap_{\gamma\in R(X), \croc{\beta, \gamma}=0} \ker(\gamma)  \quad \cap \bigcap_{\gamma\in R(X), \croc{\beta, \gamma}\neq0} \ker(\gamma)\cap \ker(\beta) \\ &=&
        \bigcap_{\gamma\in R(X)} \ker(\gamma).
    \end{eqnarray*}

        As the $s_{\beta}$ generate the Weyl group, the lemma follows.
    \end{proof}
    Let us continue with the proof of Proposition \ref{suino}.

    Suppose $H\in\bigcap_{\gamma\in R(X)} \ker(\gamma)$.
    If $w\in\mf{w}$ and $\alpha\in\Delta(X)$, then 
    \begin{equation*}
        H\in \bigcap_{\gamma\in R(X)} \ker(\gamma)=\bigcap_{\gamma\in R(X)} \ker(w^{-1}\gamma)\subset \ker(w^{-1}\alpha),
    \end{equation*} that is to say, $\alpha(wH)=0$.
    We have shown that $H\in\mathcal{H}(X)$.

    Now suppose that $H\in\mathcal{H}(X)$. Clearly $H\in\bigcap_{\alpha\in\Delta(X)} \ker(\gamma)$.
    Let us take any $\beta\in R(X)\setminus \Delta(X)$, in particular there exists $\gamma\in\Delta(X)$ such that $\croc{\beta, \gamma}\neq0$.
    Then, by lemma \ref{suinoprimo},we have that $H\in \ker(\gamma)\cap \ker(s_{\beta}\gamma)=ker(\gamma)\cap \ker(\beta)$. 
    We therefore have $H\in\bigcap_{\gamma\in R(X)} \ker(\gamma)$.
\end{proof}
According to this result, to have non-trivial solutions, the root system must have strong orthogonality properties.
We can see, in fact, that no irreducible root system has a non-trivial solution.

\begin{prop}\label{radirred}
    Suppose that the root system $\Delta$ is irreducible, then for all non-regular $X$ we have $\mathcal{H}(X)=\{0\}$.
\end{prop}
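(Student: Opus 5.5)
By Proposition \ref{suino}, $\mathcal{H}(X)=\bigcap_{\alpha\in R(X)}\ker(\alpha)$. It therefore suffices to show that, for $\Delta$ irreducible and $X$ non-regular, $R(X)=\Delta$. Once this holds, $\mathcal{H}(X)=\bigcap_{\alpha\in\Delta}\ker\alpha=\{0\}$, because the roots span $\mf{a}^*$: for semisimple $\mfg$ with $\mf{a}$ as above, $\text{Span}\,\Delta=\mf{a}^*$, as was already used in the proof of Proposition \ref{explicit}.

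Recall that $R(X)=\Delta\setminus(\Delta^0_X)^{\perp}$. Since $X$ is non-regular, $\Delta^0_X\neq\varnothing$. Set $V=\text{Span}\,\Delta^0_X\neq 0$. Then $(\Delta^0_X)^{\perp}\cap\Delta=\Delta\cap V^{\perp}$, so the claim $R(X)=\Delta$ is equivalent to saying that no root is orthogonal to every element of $\Delta^0_X$.

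The key step is to argue by contradiction. Suppose $\Delta_2:=\Delta\cap V^\perp\neq\varnothing$, and consider $\Delta_1:=\Delta\setminus\Delta_2=R(X)$, which contains $\Delta^0_X$ and is therefore nonempty. In the preceding lemma of the proof of Proposition \ref{suino} we showed that $s_\beta$ preserves $R(X)$ when $\beta\in(\Delta^0_X)^\perp$. The analogous statement for $\beta\in R(X)$ is not directly available. Instead, I would use the classical characterization of irreducibility: $\Delta$ is irreducible if and only if its Dynkin graph (roots joined when $\croc{\alpha,\beta}\neq0$) is connected, equivalently $\Delta$ admits no partition into two nonempty mutually orthogonal subsets.

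It thus suffices to show that every root is either in $\Delta_2$ or non-orthogonal to some element of $\Delta^0_X$, and to produce the orthogonal splitting. I would do this by taking $\Delta_1'$ to be the connected component(s) of the graph meeting $\Delta^0_X$; by definition these are all orthogonal to the remaining components. Irreducibility forces $\Delta_1'=\Delta$. Hence every root $\gamma$ is joined by a path $\gamma=\gamma_0,\gamma_1,\dots,\gamma_r\in\Delta^0_X$ with consecutive roots non-orthogonal.

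The main obstacle is passing from such a path to a single non-orthogonality $\croc{\gamma,\alpha}\neq 0$ with $\alpha\in\Delta^0_X$, since a path alone does not give this. To bridge the gap I would use Weyl-group invariance of $\Delta^0_X$-spans, through the fact that $V^\perp$ is an orthogonal sum of spans of roots. Concretely, suppose $\gamma\in V^\perp$ and take its path. Let $j$ be the last index with $\gamma_j\in V^\perp$. Then $\gamma_{j+1}\notin V^\perp$ and $\croc{\gamma_j,\gamma_{j+1}}\neq0$. Applying $s_{\gamma_j}$, which fixes $V$ pointwise because $\gamma_j\perp V$, produces the root $s_{\gamma_j}\gamma_{j+1}=\gamma_{j+1}-c\,\gamma_j$ with $c\neq0$. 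This root has the same projection to $V$ as $\gamma_{j+1}$.

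I would then try to show that the set $\Delta_1$ is closed under such reflections and orthogonal to $\Delta_2$. More precisely, I would verify the standard fact that for a root subsystem $\Delta\cap V^\perp$ cut out by a subspace spanned by roots, the decomposition $\text{Span}\,\Delta = V\oplus V^\perp$ satisfies the following: if both $\Delta\cap V$-generated pieces and $\Delta\cap V^\perp$ are nonempty, then irreducibility forces some root to have nonzero components in both, and this root yields $\croc{\gamma,\alpha}\neq0$. If this route stalls, the fallback is a case check on the Dynkin classification of irreducible systems ($A_n,B_n,C_n,D_n,E_{6,7,8},F_4,G_2$, and $BC_n$ for restricted roots), verifying directly that $\{\alpha\}^{\perp}\cap\Delta$ cannot be orthogonal to all roots non-orthogonal to $\alpha$.
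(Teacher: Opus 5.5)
\textbf{There is a genuine gap: the claim you reduce to is false.} You set out to prove $R(X)=\Delta$, but this does not hold in general, so the contradiction you set up from $\Delta\cap V^\perp\neq\varnothing$ can never be reached.

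Take $\Delta=A_3$, i.e.\ $\mathfrak{sl}_4(\rr)$, and $X=\mathrm{diag}(1,1,0,-2)$. Then $\Delta^0_X=\{\pm(e_1-e_2)\}$, while $e_3-e_4\perp e_1-e_2$, so $\pm(e_3-e_4)\notin R(X)$. The same happens with $e_2\perp e_1$ in $B_2$.

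Irreducibility forbids a partition of $\Delta$ into two nonempty mutually orthogonal pieces. It does not forbid roots orthogonal to $V$. So the ``main obstacle'' you name, turning a path into a single non-orthogonality $\croc{\gamma,\alpha}\neq0$ with $\alpha\in\Delta^0_X$, is not a technical difficulty but an impossibility: $\gamma\in V^\perp$ means exactly that $\gamma$ is orthogonal to all of $\Delta^0_X$.

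The ``standard fact'' you quote only produces some root with nonzero components in both $V$ and $V^\perp$. Such a root lies in $R(X)$ and says nothing about $\gamma$. Your fallback case check targets the statement that $\{\alpha\}^\perp\cap\Delta$ is not orthogonal to all roots non-orthogonal to $\alpha$. That follows at once from irreducibility, but it does not by itself give $\mathcal{H}(X)=\{0\}$.

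\textbf{How to repair it.} The proposition only needs that $R(X)$ spans $\mf{a}^*$. In the example above, $R(X)\supset\{e_1-e_k:\ k\neq 1\}$, which spans. Your own reflection computation proves exactly this spanning statement.

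Put $U=\text{span}\,R(X)$ and let $\gamma\in\Delta\setminus U$. Then $\gamma\notin R(X)$, so $\gamma\perp V$. Suppose $\croc{\gamma,\beta}\neq0$ for some $\beta\in R(X)$. Then $s_\gamma\beta=\beta-c\gamma$ with $c\neq0$ has the same nonzero projection to $V$ as $\beta$. Hence $s_\gamma\beta\in R(X)$ and $\gamma=(\beta-s_\gamma\beta)/c\in U$, a contradiction.

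So every root outside $U$ is orthogonal to $U$. This makes $\Delta=(\Delta\cap U)\sqcup(\Delta\setminus U)$ an orthogonal splitting with $\Delta\cap U\supset\Delta^0_X$ nonempty. Irreducibility then gives $\Delta\subset U$, i.e.\ $U=\mf{a}^*$ and $\mathcal{H}(X)=\{0\}$.

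\textbf{Comparison with the paper.} With this correction your route is a uniform, classification-free proof. The paper instead fixes $\alpha=e_i-e_j\in\Delta^0_X$ and, for type $A_n$ only, exhibits the $n$ linearly independent roots $e_i-e_k$ ($k\neq i$), all non-orthogonal to $\alpha$. It leaves the other types unchecked.
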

\begin{proof}
Every irreducible root system is part of a class of type $A_n, B_n$, $C_n, D_n$ or one of the sporadic systems $E_6, E_7, E_8, F_4, G_2$. 
For the sake of brevity, we only give the proof for the $A_n$.
Systems of type $A_n$: $A_n$ is realized as a subset of $E=\{ x\in\rr^{n+1}: \sum_i x_i=0\}$, it is the set of elements with one coordinate equal to -1 and one equal to 1.

The standard set of simple roots is $\{e_i-e_{i+1}: i\leq n\}$, if $n\geq1$.
Suppose then $n\geq 1$, and $\Delta(X)\neq\varnothing$, that is to say  $(e_i-e_j)(X)=0$ for some pair $(i, j)$.
As for all $k\neq i$: $\croc{e_i-e_j, e_i-e_k}\neq0$, we have $R(X)\supset \{e_i-e_k: k\neq i\}$.
These are $n$ linearly independent elements, in particular their kernels have intersection $0$, so $\mathcal{H}(X)=\{0\}$.
\end{proof}

We are finally able to give a complete classification. Suppose that the root system $\Delta$ decomposes into $\Delta= \Delta^{(1)}\cup...\cup\Delta^{(m)}$, where the $\Delta^{(j)}$ are all irreducible and pairwise orthogonal.
Each $\Delta^{(j)}$ generates a subspace $V^{(j)}\subset\mf{a}^*$, and the $V^{(j)}$ form a decomposition of $\mf{a}^*$ into orthogonal subspaces.
To each $V^{(j)}$ is associated a subspace $\mf{a}^{(j)}=(\bigcap_{\Delta^{(j)}} \ker(\alpha))^{\perp}\subset\mf{a}$, which thus form an orthogonal decomposition of $\mf{a}$.
Let us remark that each $\Delta^{(j)}$ is a root system associated with $\mf{a}^{(j)}$.
Next we note that this also induces a decomposition of the Lie algebra $\mfg$: for each $j$ let us define $\mf{g}^{(j)}=\mf{a}^{(j)}+\sum_{\alpha\in\Delta^{(j)}} \mfg_{\alpha}$.
We have $\mfg=\mf{m}+\mf{a}+\sum_{\alpha\in\Delta}\mfg_{\alpha}=\mf{m}+\sum_{j} \mfg^{(j)}$. We can see that $\mf{a}^{(j)}$ normalizes $\mfg^{(j)}$, and for each $\alpha, \beta\in\Delta^{(j)}$ if $\alpha+\beta\in\Delta$, then $\alpha+\beta\in\Delta^{(j)}$, so $\mfg^{(j)}$ is a subalgebra of $\mfg$, as $[\mfg_{\alpha}, \mfg_{\beta}]\subset\mfg_{\alpha+\beta}$.
On the other hand, each $\mf{a}^{(i)}$ commutes with $\mfg^{(j)}$ for $i\neq j$.
If $\alpha\in\Delta^{(i)}$ and $\beta\in\Delta^{(j)}$ then $\alpha+\beta\notin\Delta$: otherwise, as $\croc{\alpha, \beta}=0$ we have that $\croc{\alpha, \alpha+\beta}\neq0$ and $\croc{\beta, \alpha+\beta}\neq0$ and as the $\Delta^{(k)}$ form an orthogonal decomposition of $\Delta$, this would imply $\alpha+\beta\in \Delta^{(i)}\cap\Delta^{(j)}=\varnothing$.
Therefore each $\mfg^{(j)}$ is an ideal of $\mfg$. 
Finally, let us remark that if our original Lie algebra is equipped with a Cartan automorphism, and $\mfg=\mf{k}+\mf{a}+\mf{n}$ is the Iwasawa decomposition, we can see that each $\mfg^{(j)}$ is fixed by the Cartan automorphism: in fact $\mf{a}^{(j)}\subset \mf{s}$ is the eigenspace of $-1$ for $\theta$,  and for each $\alpha\in\Delta^{(j)}$ we have  $-\alpha\in\Delta^{(j)}$ and $\theta\mfg_{\alpha}=\mfg_{-\alpha}\subset\mf{g}^{(j)}$.
Recall that a semisimple Lie algebra $\mf{p}$ is simple if and only if its root system is simple, so $\mfg=\sum_{j}\mfg^{(j)}$ is a decomposition into simple ideals of $\mfg$.
Finally we remark that the Weyl group is generated by reflections on the roots, and therefore fixes the decomposition of $\mf{a}$.
Therefore the Weyl group is the product of the Weyl groups that act independently on each component of $\mf{a}$.
We finally obtain the following characterization.
\begin{prop} \label{soluzione}
    Suppose that $\mfg$ is semisimple, with the decomposition and notations above.
    Given any $X=\sum_j X_j\in\mf{a}$, let $I(X)=\{j: X_j\in\mf{a}^{(j)} \ \textit{is non-regular}\}$. Then $\mathcal{H}(X)=\bigoplus_{j\notin I(X)} \mf{a}^{(j)}$.
\end{prop}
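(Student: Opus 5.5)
The plan is to combine Proposition \ref{suino} with the orthogonal decomposition $\Delta=\Delta^{(1)}\cup\dots\cup\Delta^{(m)}$ and with Proposition \ref{radirred}. We compute $R(X)$ componentwise, then intersect the kernels.

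\emph{Step 1: $R(X)$ splits along the components.} Since $\alpha(X)=\alpha(X_j)$ for $\alpha\in\Delta^{(j)}$ (the other $X_i$ lie in $\bigcap_{\Delta^{(j)}}\ker$), we have $\Delta^0_X=\bigcup_j \Delta^0_{X_j}\cap\Delta^{(j)}$. Roots in different components are orthogonal, so a root $\alpha\in\Delta^{(j)}$ is non-orthogonal to some $\beta\in\Delta^0_X$ only if such a $\beta$ lies in $\Delta^{(j)}$. Hence
\[
R(X)\cap\Delta^{(j)}=R_j(X_j),
\]
where $R_j$ is the set $R$ computed inside the irreducible root system $\Delta^{(j)}$ on $\mf{a}^{(j)}$. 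In particular, if $j\notin I(X)$, then $X_j$ is regular, $\Delta^0_{X_j}=\varnothing$, and therefore $R(X)\cap\Delta^{(j)}=\varnothing$.

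\emph{Step 2: intersect the kernels.} Writing $H=\sum_j H_j$, a root $\alpha\in\Delta^{(j)}$ satisfies $\alpha(H)=\alpha(H_j)$. By Proposition \ref{suino},
\[
\mc{H}(X)=\bigcap_{\alpha\in R(X)}\ker\alpha=\bigoplus_j \Big(\mf{a}^{(j)}\cap\bigcap_{\alpha\in R_j(X_j)}\ker\alpha\Big).
\]
For $j\notin I(X)$ the intersection is empty, so the $j$-th factor is all of $\mf{a}^{(j)}$. For $j\in I(X)$, $X_j$ is non-regular in the irreducible system $\Delta^{(j)}$. Applying Proposition \ref{suino} together with Proposition \ref{radirred} inside $\mf{a}^{(j)}$ gives that $\bigcap_{R_j(X_j)}\ker\alpha\cap\mf{a}^{(j)}=\mc{H}_j(X_j)=\{0\}$. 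This yields $\mc{H}(X)=\bigoplus_{j\notin I(X)}\mf{a}^{(j)}$.

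\emph{Main obstacle.} The delicate point is justifying the use of Proposition \ref{radirred} for every irreducible type, since its proof was written only for $A_n$. I would argue uniformly rather than case by case. For an irreducible system, the subset $R_j(X_j)$ is nonempty and closed under the relevant reflections, so by the second preliminary lemma its common kernel is a $\mf{w}^{(j)}$-invariant subspace of $\mf{a}^{(j)}$. Irreducibility means $\mf{w}^{(j)}$ acts irreducibly on $\mf{a}^{(j)}$, so this kernel is either $0$ or everything. It cannot be everything, because it lies inside $\ker\beta$ for some $\beta\in\Delta^0_{X_j}\neq\varnothing$. Hence it is $\{0\}$, which is exactly the content of Proposition \ref{radirred} in full generality.
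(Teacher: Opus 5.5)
Your proof is correct and follows the paper's argument: you split $R(X)$ along the orthogonal components (so $R(X)\cap\Delta^{(j)}=\varnothing$ when $X_j$ is regular), apply Proposition \ref{radirred} on each non-regular component, and intersect the kernels. Your uniform justification of Proposition \ref{radirred} is also valid. It combines $\mf{w}$-invariance of the common kernel, from the second preliminary lemma, with irreducibility of the Weyl group action on $\mf{a}^{(j)}$, and it covers the types the paper skips by proving only $A_n$. One small correction: what is $\mf{w}$-invariant is the common kernel, not the set $R_j(X_j)$ itself, which need not be closed under reflections.
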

\begin{proof}
    We start by saying that $\Delta(X)\cap \Delta^{(j)}\neq \varnothing$ if and only if $j\in I(X)$.
    For all $j\in I(X)$, we can look at the problem restricted to $\mf{a}^{(j)}$ to find by proposition \ref{radirred} that $\bigcap_{R(X)\cap\Delta^{(j)}} \ker_{\mf{a}^{(j)}}(\alpha) = \{0\}$.
    Therefore, in the global context this becomes $\bigcap_{R(X)\cap\Delta^{(j)}} \ker_{\mf{a}}(\alpha) = \{H=\sum_i H_i: H_j=0\}=\bigoplus_{i\neq j}\mf{a}^{(i)}$.
    On the contrary, for all $j\notin I(X)$ we have $R(X)\cap \Delta^{(j)}=\varnothing$, as the $\Delta^{(j)}$ are all orthogonal and $\Delta)(X)\cap\Delta^{(j)}=\varnothing$.
    Finally we find
    \begin{eqnarray*}
        \mathcal{H}(X) &=& \bigcap_{R(X)} \ker(\alpha)=\bigcap_{j\in I(X)} \bigcap_{R(X)\cap\Delta^{(j)}} \ker(\alpha) \\ &=& \bigcap_{j\in I(X)} \bigoplus_{i\neq j} \mf{a}^{(j)} = \bigoplus_{j\notin I(X)} \mf{a}^{(j)}.
    \end{eqnarray*}
\end{proof}
This concludes the complete characterization, which is rephrased in the following theorem, to be coherent with the notations of the discussion in Section \eqref{setting}. In particular, recall that 
\[
\mf{a}_{++}(\mc{C})= \{ X\in\mf{a}: \ \alpha(X)>0 \quad \forall\alpha\in (\mf{w}\Delta^+_{\mc{C}})\cap \Delta^+\} \subset \mf{a}
\]
is the maximal subset of $\mf{a}$ with uniform dynamical properties, in the sense of Section \ref{setting}, which also contains the Weyl chamber $\mf{a}_{++}$. 

\begin{thm}
    Suppose $\mfg$ is semisimple, and consider the decomposition above. Given $\mc{C}\subset \overline{\mf{a}_{++}}$, if $I(\mc{C})=\{ j: \ \mc{C}\subset \ker \alpha,\  \forall\alpha\in\Delta^{(j)}\}$, then
    \[
    \mf{a}_+(\mc{C})= \{ X\in\mf{a}: \ X \ \text{regular in }  \mfg_j, \ \forall j\notin I(\mc{C})\} = \bigoplus_{j\notin I(\mc{C})} \text{Reg}(\mfg_j) \oplus\bigoplus_{j\in I(\mc{C})} \mfg_j
    \]
\end{thm}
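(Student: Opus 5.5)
The theorem follows by translating condition \eqref{condizione} into the language of Proposition \ref{soluzione}.

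Fix $\mc{C}\subset\overline{\mf{a}_{++}}$ and choose $H\in\mc{C}$ with $\Delta^0_{\mc{C}}=\Delta^0_H$, as at the start of this appendix. The first step is to identify $\mf{a}_+(\mc{C})$ with the set of $X$ for which the pair $(X,H)$ satisfies \eqref{cabbo}, i.e. with $\{X:\ H\in\mc{H}(X)\}$. This requires checking that $\alpha(X)>0$ for all $\alpha\in(\mf{w}\Delta^+_{\mc{C}})\cap\Delta^+$ (up to the choice of chamber, which by $\mf{w}$-invariance of \eqref{cabbo} is harmless) is equivalent to asking that no root in $\mf{w}\Delta^+_{\mc{C}}$ vanishes on $X$. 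Equivalently, every $\alpha$ with $\alpha(X)=0$ must satisfy $\alpha(wH)=0$ for all $w$. Indeed, $\alpha\in w\Delta^+_{\mc{C}}$ up to sign means exactly that $\alpha(wH)\neq0$ for some $w$.

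Next, apply Proposition \ref{soluzione}: $H\in\mc{H}(X)$ if and only if $H\in\bigoplus_{j\notin I(X)}\mf{a}^{(j)}$, that is, $H_j=0$ for every $j\in I(X)$. Now $H_j=0$ holds exactly when every $\alpha\in\Delta^{(j)}$ vanishes on $H$. Since $\Delta^0_{\mc{C}}=\Delta^0_H$, this is the same as $\mc{C}\subset\ker\alpha$ for all $\alpha\in\Delta^{(j)}$, i.e. $j\in I(\mc{C})$. Hence the condition reads $I(X)\subset I(\mc{C})$. In words, $X_j$ must be regular in $\mf{a}^{(j)}$ for every $j\notin I(\mc{C})$, while $X_j$ is arbitrary for $j\in I(\mc{C})$.

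Finally, the direct-sum description follows because the decomposition $\mf{a}=\bigoplus_j\mf{a}^{(j)}$ is orthogonal and regularity of $X$ in $\mfg_j$ depends only on $X_j$, as the roots of $\Delta^{(j)}$ vanish on the other summands. Restricting to the chamber containing $\mf{a}_{++}$ then gives the Weyl-chamber component $\text{Reg}(\mfg_j)$ for $j\notin I(\mc{C})$, and the full factor for $j\in I(\mc{C})$, matching the Lemma of Section \ref{setting}.

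The main obstacle is the first step: matching the definition of $\mf{a}_{++}(\mc{C})$ with the vanishing-root formulation \eqref{cabbo}. This needs care with signs, with $\mf{w}\Delta^+_{\mc{C}}\cap\Delta^+$ versus the full $\mf{w}\Delta^+_{\mc{C}}$, and with the fact that $\mf{w}\Delta^+_{\mc{C}}\cup(-\mf{w}\Delta^+_{\mc{C}})$ is precisely the set of roots not vanishing on all of $\mf{w}H$. I would first verify this last set-theoretic identity directly, using that $\Delta^+_{\mc{C}}=\{\alpha:\alpha(H)>0\}$.
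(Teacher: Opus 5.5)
Your proposal is correct and follows the paper's route. The paper gives no separate argument for this theorem and presents it as a rephrasing of Proposition \ref{soluzione}; that is exactly your translation of $H\in\mc{H}(X)$ into $I(X)\subset I(\mc{C})$ via $H_j=0\iff j\in I(\mc{C})$. Your care in matching $\mf{a}_{++}(\mc{C})$ with the $\mf{w}$-invariant condition \eqref{cabbo} up to the chamber fills in what the paper leaves implicit, but for the final chamber restriction you should state explicitly that $\mf{w}\Delta^+_{\mc{C}}=\bigcup_{j\notin I(\mc{C})}\Delta^{(j)}$. This holds because $\mf{w}^{(j)}H_j$ spans $\mf{a}^{(j)}$ whenever $H_j\neq 0$, by irreducibility of the Weyl group action on each component.
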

As a consequence, given $\mc{C}\subset \mfg$, we may take $G=\prod_{j\notin I(\mc{C})}G_j$ with Lie algebra $\tilde{\mf{g}}$ and define the projection $\Pi: \mfg\rightarrow \oplus_{j\notin I(\mc{C})} \mfg_i=\tilde{\mfg}$ according to the above decomposition, and pose $\tilde{\mc{C}}=\Pi \mc{C}$ . Then $F_{\mc{C}}\cong F_{\tilde{\mc{C}}}$ and the isomorphism conjugates the flow induced on $F_{\mc{C}}$ by $X\in\mf{a}$ to the flow induced on $F_{\tilde{\mc{C}}}$ by $\Pi X\in \Pi\mf{a}\subset \tilde{g}$.

\bibliographystyle{alpha}
\bibliography{refs}

\end{document}